\newcommand{\m}[1]{{\bf {#1}}}									
\newcommand{\cls}[1]{\ensuremath{{\sf #1}}}						
\newcommand{\pc}[1]{\ensuremath{{\sf #1}}}						
\newcommand{\opr}[1]{\ensuremath{{\mathbb #1}}}					
\newcommand{\prp}[1]{{\sf #1}}
\newcommand{\tuple}[1]{\ensuremath{\langle{#1}\rangle}}
\newcommand{\N}{\mathbb{N}}
\newcommand{\Z}{\mathbb{Z}}
\newcommand{\R}{\mathbb{R}}
\newcommand*{\Nsum}{\boxplus}
\newcommand*{\nsum}{\boxplus}
\newcommand{\Com}[2]{\m{C}^{#2}_{#1}} 
\newcommand{\com}[2]{C^{#2}_{#1}} 
\newcommand{\Con}[1]{\mathrm{Con}(#1)}
\newcommand{\hm}{\opr{H}}
\newcommand{\iso}{\opr{I}}
\newcommand{\pr}{\opr{P}}
\newcommand{\pu}{\opr{P}_\mathsf{U}}
\newcommand{\sub}{\opr{S}}
\newcommand{\vr}{\opr{V}}
\newcommand{\hspu}{\hm\sub\pu}
\newcommand{\V}{\cls{V}}
\newcommand{\K}{\cls{K}}
\newcommand{\Vfc}{\V_{\mathrm{fc}}}
\newcommand{\Vfsi}{{\cls{V}_{\mathrm{FSI}}}}
\newcommand{\Vsi}{{\cls{V}_{\mathrm{SI}}}}
\newcommand{\Kfsi}{{\cls{K}_{\mathrm{FSI}}}}
\newcommand{\Ksi}{{\cls{K}_{\mathrm{SI}}}}
\newcommand{\chain}[1]{{#1}_{\mathrm{C}}}
\newcommand{\under}{\backslash}
\newcommand{\ovr}{/}
\newcommand{\ut}{\textrm{\textup{e}}}
\newcommand{\zr}{\textrm{\textup{f}}}
\newcommand{\meet}{\mathbin{\land}}
\newcommand{\join}{\mathbin{\lor}}
\newcommand{\alg}[1]{\langle #1 \rangle}
\newcommand{\pair}[1]{\langle #1 \rangle}
\newcommand{\The}{\Theta}
\newcommand{\SemRL}{\pc{SRL}}					
\newcommand{\CSemRL}{\pc{CSRL}}				
\newcommand{\nSRL}[1]{{#1}\pc{SRL}}		
\newcommand{\nCSRL}[1]{{#1}\pc{CSRL}}			
\newcommand{\BL}{\pc{BL}}							
\newcommand{\BH}{\pc{BH}}							
\newcommand{\MV}{\pc{MV}}							
\newcommand{\WH}{\pc{WH}}					
\newcommand{\MTL}{\pc{MTL}}						
\newcommand{\CanSRL}{\pc{CanSRL}}			
\newcommand{\CCanSRL}{\pc{CanCSRL}}		
\newcommand{\LG}{\pc{LG}}								
\newcommand{\ALG}{\pc{ALG}}						
\newcommand{\RLG}{\pc{RLG}}						
\newcommand{\GA}{\pc{GA}}  					 	
\newcommand{\RSA}{\pc{RSA}}						
\newcommand{\SM}{\pc{SM}}							
\newcommand{\OSM}{\pc{OSM}}					
\newcommand{\slat}{\Lambda}
\newcommand{\amal}{\Omega}
\newcommand{\starm}{{\scaleobj{0.75}{\wedge}}}
\newcommand{\Ln}[1]{\pmb{\textup{\L}}_{#1}}
\newcommand{\Lnw}[1]{\pmb{\textup{\L}}_{{#1},\omega}}
\newcommand{\Wn}[1]{\m{W}_{#1}}
\newcommand{\Wnw}[1]{\m{W}_{{#1},\omega}}
\DeclareMathOperator{\luk}{\mathrm{\textup{\L}}}
\DeclareMathOperator{\wajs}{Wajs}
\DeclareMathOperator{\basic}{Basic}
\DeclareMathOperator{\basicfc}{Basic_{\mathrm{fc}}}
\newtheorem{theorem}{Theorem}[section]
\newtheorem{lemma}[theorem]{Lemma}
\newtheorem{proposition}[theorem]{Proposition}
\newtheorem{corollary}[theorem]{Corollary}
\theoremstyle{definition}
\newtheorem{remark}[theorem]{Remark}
\newtheorem{question}[theorem]{Question}
\newtheorem{example}[theorem]{Example}
\begin{document}

\setcounter{page}{1}     

 
\twoAuthorsTitleoneline{Wesley Fussner}{Simon Santschi}{Amalgamation in Semilinear Residuated Lattices}





\renewcommand{\footnoterule}{\noindent\rule{5cm}{0.4pt}{\vspace{5pt}}}%
\renewcommand\thefootnote{\arabic{footnote}}%

   

\setcounter{footnote}{0}%


{\setstretch{0.85}
\begin{displayquote}
{\footnotesize {\bf Abstract.} We survey the state of the art on amalgamation in varieties of semilinear residuated lattices. Our discussion emphasizes two prominent cases from which much insight into the general picture may be gleaned: idempotent varieties and their generalizations ($n$-potent varieties, knotted varieties), and cancellative varieties and their relatives (MV-algebras, BL-algebras). Along the way, we illustrate how general-purpose tools developed to study amalgamation can be brought to bear in these contexts and solve some of the remaining open questions concerning amalgamation in semilinear varieties. Among other things, we show that the variety of commutative semilinear residuated lattices does not have the amalgamation property. Taken as a whole, we see that amalgamation is well understood in most interesting varieties of semilinear residuated lattices, with the last few outstanding open questions remaining principally in the cancellative setting.}
\end{displayquote}}

\Keywords{amalgamation, residuated lattices, lattice-ordered groups, substructural logic, interpolation}


\section{Introduction}\label{sec:intro}

The amalgamation property is of fundamental importance throughout algebra and model theory, and was first considered in the context of groups by Schreier in 1927 \cite{Schreier1927}. In the last one hundred years, amalgamation has been studied in an enormous range of environments; the reader may consult \cite{Kiss1983} for a general survey of what was known as of the early 1980s.

In the particular context of residuated lattices, the study of amalgamation has been primarily inspired by two strands of research. First, the latter half of the twentieth century witnessed the gradual development of systematic links between amalgamation and various kinds of syntactic interpolation properties, themselves of widespread interest in logic; see \cite{Metcalfe2014,KiharaOno2010,Mak77} for a sample of this literature. Amalgamation in varieties of residuated lattices thus has considerable importance to the study of interpolation properties in substructural logics, for which residuated lattices provide algebraic models. Second, residuated lattices may be viewed quite naturally as generalizations of lattice-ordered groups, and amalgamation in the latter environment is the subject of a long and intriguing literature; see, e.g.,  \cite{PowellTsinakis1989b,PowellTsinakis1983,PowellTsinakis1989a,Glass1984,Pierce1972a,Pierce1972b}. Deepening our understanding of amalgamation in residuated lattices both sheds new light on this theory, as well as extends it in novel directions.

Although significant progress has been made in the last several decades, our understanding of amalgamation in classes of residuated lattices remains in its infancy, even for extremely well-behaved classes such as varieties. However, in the case of semilinear residuated lattices---i.e., those which are subdirect products of totally ordered residuated lattices---we have gained a tremendous amount of understanding, particularly in the last decade. We now have a very detailed understanding of amalgamation in most of the interesting varieties of semilinear residuated lattices, with the few outstanding open problems confined to the vicinity of cancellative semilinear residuated lattices. In this paper, we survey what is known about amalgamation in varieties of semilinear residuated lattices, both providing a synthesis of the progress in recent years and the tools that facilitated this progress, as well as pointing to the remaining open questions.

The paper proceeds as follows. First, in Section~\ref{sec:background}, we will lay out background information regarding universal algebra and residuated lattices that are used later on. Although we expect that the readers of this paper will be familiar with this material, we will take this occasion to fix notation. Then, in Section~\ref{sec:tools}, we will exhibit some general-purpose tools for the study of amalgamation that have shed light on the amalgamation property in semilinear residuated lattices in particular. The tools that we exhibit in Section~\ref{sec:tools} have been crucial to the progress made in the last decade, and we anticipate that similar techniques will play a further role as our understanding of amalgamation in residuated lattices expands out from the semilinear case.

In the remainder of the paper, we will discuss amalgamation in the context of particular varieties of semilinear residuated lattices. As this part of the paper will show, we are now able to give a rather sharp picture of how amalgamation works among most of the interesting semilinear varieties. In Section~\ref{sec:idempotent}, we discuss the classical cases of semilinear Heyting algebras (Section~\ref{sec:godel}) and Sugihara monoids (Section~\ref{sec:sugihara}), offering characterizations of varieties of these with the amalgamation property that take advantage of modern techniques to greatly simplify the proofs of well-known results. We also discuss the wider context of idempotent semilinear varieties in Section~\ref{sec:general}, emphasizing the surprising role of commutativity in this context. In Section~\ref{sec:knotted}, we discuss what can be said about amalgamation when idempotence is relaxed, replacing it by $n$-potence or, more generally, knotted inequalities. Here we see that the picture is rather different from in the idempotent case. In particular, we show that the variety of all commutative semilinear residuated lattices lacks the amalgamation property and even upon adjoining most knotted inequalities (in particular, integrality) and/or involution. The paper concludes in Section~\ref{sec:cancellative}, where we discuss what is known about amalgamation among cancellative semilinear varieties and varieties related to them. Here our understanding is much murkier than in the other cases. We sketch what is known about amalgamation in lattice-ordered groups and cancellative residuated lattices generally (Section~\ref{sec:lgroups}). Then, we go on to discuss two adjacent cases: That of MV-algebras and Wajsberg hoops (Section~\ref{sec:MV}) and BL-algebras and basic hoops (Section~\ref{sec:BL}). We see that in all of these cases we now have a complete understanding of amalgamation in arbitrary subvarieties.

\section{Universal Algebra and Residuated Lattices}\label{sec:background}

We will begin by laying out some key definitions and concepts, primarily to acquaint the reader with our notation. Our assumption is that readers are familiar with the elements of universal algebra, lattice theory, and residuated structures. Background on these topics may be found in \cite{BP1981}, \cite{DaveyPriestley2002}, and \cite{MPT23,GalatosJipsenKowalskiOno2007}, respectively.

Given any algebra $\m{A}$, we will write $\Con{\m A}$ for the lattice of congruences of $\m{A}$. The usual universal-algebraic class operators of homomorphic images, isomorphic images, subalgebras, products, and ultraproducts will be denoted by $\hm$, $\iso$, $\sub$, $\pr$, and $\pu$, respectively. If $\m{A}$ is an algebra and $(a,b)\in A^2$, we denote by $\Theta(a,b)$ the least member of $\Con{\m{A}}$ that contains $(a,b)$. For any class $\K$ of algebras, the classes of subdirectly irreducible and finitely subdirectly irreducible algebras in $\K$ will be respectively denoted by $\Ksi$ and $\Kfsi$. We write $\slat(\V)$ for the lattice of subvarieties of the variety $\V$. 

A \emph{residuated lattice} is an algebra ${\m A} = \alg{A,\meet,\join,\under,\ovr,\ut}$ of type $(2,2,2,2,0)$ such that $\alg{A,\meet,\join}$ is a lattice, $\alg{A,\cdot,\ut}$ is a monoid, and for all $x,y,z\in A$,
\[
y\leq x\under z \iff x\cdot y\leq z \iff x\leq z\ovr y.
\]
Residuated lattices make up a variety $\sf RL$. We will usually write $xy$ for $x\cdot y$. 

This paper focuses on \emph{semilinear} residuated lattices. The latter are residuated lattices that are subdirect products of totally ordered ones. Semilinearity is equivalent to satisfaction of the equation
\[
(z\under (x\ovr (x\join y))z\meet\ut)\join (w(y\ovr (x\join y))\ovr w\meet \ut)\approx \ut,
\]
so semilinear residuated lattices form a subvariety $\mathsf{\SemRL}$ in $\slat (\mathsf{RL})$; see \cite[Theorem~2.2]{GC2004} and \cite[Section~6.1]{BlountTsinakis2003}. 
Appendix~\ref{app:nomenclature} contains a summary of the naming conventions for subvarieties of $\mathsf{\SemRL}$ that we will use in this paper. This also includes a bird's-eye view of the results on amalgamation that we will discuss in this paper.

A residuated lattice is called \emph{commutative} if it satisfies $xy\approx yx$. The equation $x\under y \approx y\ovr x$ holds in any commutative residuated lattice, so it is duplicative to include both $\under$ and $\ovr$ in the signature for commutative residuated lattices. Thus, in this case we will just write $x\to y$ for the operation given by $x\under y$. The variety of semilinear commutative residuated lattices will be denoted by $\CSemRL$.

We will have occasion to consider several expansions of the basic signature of residuated lattices. First, a \emph{bounded} residuated lattice is one that has both a least and greatest element that are designated by constants in the language. Second, there are a number of cases where we may want to consider expansions of residuated lattices by an additional constant $\zr$ that is intended as a \emph{negation constant}, in the sense that we define new unary operations ${\sim}x=x\under\zr$ and $-x=\zr\ovr x$. A negation constant $\zr$ is called \emph{cyclic} if it satisfies $x\under\zr\approx\zr\ovr x$, in which case we write the common value by $\neg x$. Note that commutativity implies cyclicity. Further, a negation constant $\zr$ is called \emph{left involutive} if it satisfies $-{\sim}x\approx x$ and \emph{right involutive} if it satisfies ${\sim}-x\approx x$. A negation constant is \emph{involutive} if it is both left involutive and right involutive. When we refer to a cyclic or (left, right) involutive residuated lattice, we mean an expansion of a residuated lattice by a negation constant with the specified properties.

Residuated lattices are both congruence distributive and congruence permutable. They are also $\ut$-regular: Each congruence $\Theta$ in a residuated lattice $\m{A}$ is completely determined by the $\Theta$-class of $\ut$, which is a convex normal subuniverse of $\m{A}$. In fact, if $\m{A}$ is any residuated lattice, $\Con{\m A}$ is isomorphic to the lattice of convex normal subalgebras of $\m{A}$; see, e.g., \cite{BlountTsinakis2003}.  These facts all apply to bounded, cyclic, and (left or right) involutive residuated lattices by consider the residuated lattice reducts of these.

Note that each totally ordered residuated lattice is finitely subdirectly irreducible, since its lattice of convex normal subalgebras forms a chain. Moreover, every finite totally ordered residuated lattice is subdirectly irreducible. In fact, for any $\V\in\slat(\SemRL)$ we have that $\Vfsi$ is precisely the class of totally ordered members of $\V$. This fact is indispensable for understanding amalgamation for varieties in $\slat(\SemRL)$, and we will rely on it very often throughout our discussion.

An algebra $\m{A}$ is said to have the \emph{congruence extension property} (\prp{CEP}) if for each subalgebra $\m{B}\leq \m{A}$ and congruence $\Theta \in \Con{\m{B}}$, there exists  a congruence $\Phi \in \Con{\m{A}}$ such that $\Phi \cap B^2 = \Theta$, and we say that a class of algebras has the \prp{CEP} if each of its members has the \prp{CEP}. The \prp{CEP} will be relevant for some of the tools regarding the amalgamation property, but it is also interesting from the point of view of abstract algebraic logic, since if a variety of residuated lattices has the \prp{CEP} this is equivalent to the corresponding substructural logic having a local deduction theorem.

\begin{example}[{\cite[Example 6.1]{vanAlten2005}}]\label{ex:CEPfail}
We consider the totally ordered residuated lattice depicted in Figure~\ref{fig:CEPfail}. Here and throughout the paper we will adopt some notational conventions for reading labeled Hasse diagrams, such as the one given in Figure~\ref{fig:CEPfail}: Central elements (i.e., those that commute with all other elements) will be labeled by rounded nodes, whereas non-central elements will be labeled by square nodes. A filled node denotes an idempotent element, whereas an open node denotes an element that is not idempotent.

Referring to these conventions, one may see that the totally ordered residuated lattice $\m{A}$ depicted in Figure~\ref{fig:CEPfail} does not have the \prp{CEP}. Just note that the convex normal subalgebra $\m{B}$ of $\m{A}$ that is generated by $a$ has universe $\{\ut,a,b\}$, but the convex normal subalgebra of $\m{B}$ generated by $a$ has universe $\{\ut,a\}$.
\end{example}

Although Example~\ref{ex:CEPfail} shows that  (semilinear) residuated lattices do not have the \prp{CEP} in general, this is the case for commutative (semilinear) residuated lattices.

\begin{lemma}[{\cite[Lemma~3.57]{GalatosJipsenKowalskiOno2007}}]
Every commutative residuated lattice has the congruence extension property.
\end{lemma}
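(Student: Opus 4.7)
The plan is to exploit the correspondence (recalled in the excerpt) between congruences on a residuated lattice $\m{A}$ and its convex normal subalgebras. In the commutative case, normality becomes automatic: the conjugation maps governing it collapse, so every convex subalgebra containing $\ut$ is normal. Hence, given $\m{B}\leq\m{A}$ and $\Theta\in\Con{\m{B}}$ with $H$ the $\Theta$-class of $\ut$, it suffices to exhibit a convex subalgebra $K$ of $\m{A}$ with $K\cap B=H$; the congruence $\Phi$ on $\m{A}$ corresponding to $K$ will then satisfy $\Phi\cap B^{2}=\Theta$. The natural candidate is the convex subalgebra of $\m{A}$ generated by $H$.

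The core step is to produce an explicit description of this generated subalgebra. In the commutative setting I would prove
$$K=\{a\in A:\exists h\in H,\ h\leq\ut,\ h\leq a\meet\ut,\ \text{and}\ a\join\ut\leq h\to\ut\}.$$
Closure of the right-hand set under $\meet$, $\join$, $\cdot$, and $\to$ is the substantive content: using $h_a,h_b$ as witnesses for $a,b\in K$, one can take $h_a\meet h_b$ as a witness for $a\meet b$ and $a\join b$, and $h_a\cdot h_b$ as a witness for both $a\cdot b$ and $a\to b$. The verifications are routine applications of residuation, but depend crucially on commutativity in order to rearrange products when bounding terms like $h_ah_b(a\cdot b\join\ut)\leq\ut$. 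That $K$ is convex and contains $H$ is immediate, while minimality among convex subalgebras containing $H$ follows because any such subalgebra is closed under $\to$ and absorbs anything sandwiched between $h$ and $h\to\ut$.

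Given this description, the identity $K\cap B=H$ is transparent: only the inclusion $\subseteq$ requires work. For $a\in K\cap B$ with witness $h\in H$, the element $h\to\ut$ also lies in $H$ (since $H$ is a subalgebra of $\m{B}$ containing $\ut$), and the chain $h\leq a\meet\ut\leq a\leq a\join\ut\leq h\to\ut$ sandwiches $a\in B$ between two elements of $H$. Convexity of $H$ in $\m{B}$ then forces $a\in H$. The main obstacle in the whole argument is thus the explicit characterization of $K$; once this is established, the congruence-theoretic bookkeeping (translating between congruences, convex subalgebras, and the \prp{CEP}) is routine from the material already assembled in the excerpt.
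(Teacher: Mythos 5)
Your argument is correct and is essentially the proof that the paper delegates to the cited source: pass from congruences to convex subalgebras (normality being automatic under commutativity), describe the convex subalgebra $K$ of $\m{A}$ generated by $H$ explicitly as the set of $a$ sandwiched between some negative $h\in H$ and $h\to\ut$, verify closure under the operations using commutativity, and use convexity of $H$ in $\m{B}$ together with $\ut$-regularity to get $K\cap B=H$ and hence $\Phi\cap B^2=\Theta$. The only spot where ``immediate'' is slightly optimistic is the containment $H\subseteq K$: for $h\in H$ with $h\not\leq\ut$ the witness cannot be taken to be $h\meet\ut$, but $(h\meet\ut)\cdot((h\join\ut)\to\ut)\in H$ works, a one-line fix.
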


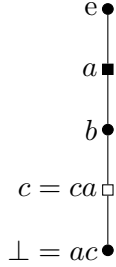
\begin{figure}
\centering
\begin{tikzpicture}[
place/.style={circle,draw=black,fill=black, minimum size = 4pt, inner sep = 0pt},
square/.style={regular polygon,regular polygon sides=4},
place2/.style={square,draw=black,fill=black, minimum size = 5.5pt, inner sep = 0pt},
place3/.style={square,draw=black, minimum size = 5.5pt, inner sep = 0pt},
place4/.style={circle,draw=black, minimum size = 4pt, inner sep = 0pt}]

   \node[place] (1) at (0,-1.6) {};
   \node[place2] (a) at (0,-2.4) {};
  \node[place] (b) at (0,-3.2) {};
  \node[place3] (c) at (0,-4) {};
  \node[place] (bot) at (0,-4.8) {}; 
   
  \node[left] () at (1) {$\ut$};
   \node[left] () at (a) {$a$};
      \node[left] () at (b) {$b$};
  \node[left] () at (c) {$c = ca$};
  \node[left] () at (bot) {$\bot = ac$};
  
  \draw (1) -- (a) -- (b) -- (c) -- (bot);
\end{tikzpicture}
\caption{Labeled Hasse diagrams of a totally ordered residuated lattice without the \prp{CEP}.}
\label{fig:CEPfail}
\end{figure}

\section{Amalgamation: Fundamentals and Tools of the Trade}\label{sec:tools}

In this section, we outline some basic methods that have proven important in understanding amalgamation in classes of semilinear residuated lattices. Although semilinear residuated lattices are our sole concern in this paper, we note that the toolkit we assemble in this section applies far beyond this setting. Our discussion is therefore framed in universal-algebraic terms, with the intention that this summary of basic techniques may be applied by the reader in other contexts. Further, the interested reader will find, upon consulting the references, that many of the results summarized in this section apply beyond the setting of varieties (e.g., to quasivarieties). However, since we are concerned with varieties in this paper, we confine our attention to that setting, where many of the results of this section have especially simple presentations.

First, some key definitions. Consider a class $\K$ of similar algebras. A \emph{span in $\K$} is an ordered pair $\langle \varphi_1 \colon \m{A} \to \m{B},\varphi_2\colon \m{A} \to \m{C} \rangle$ of injective homomorphisms $\varphi_1\colon\m{A}\to\m{B}$ and $\varphi_2\colon\m{A}\to\m{C}$, where $\m{A},\m{B},\m{C}$ are algebras in $\K$. Given a class $\mathsf{M}$ of algebras in the same signature as $\K$ and a span $\langle \varphi_1 \colon \m{A} \to \m{B},\varphi_2\colon \m{A} \to \m{C} \rangle$ in $\K$, an \emph{amalgam of $\langle\varphi_1,\varphi_2\rangle$ in $\mathsf{M}$} is an ordered pair of injective homomorphisms $\langle\psi_1\colon\m{B}\to\m{D},\psi_2\colon\m{C}\to\m{D}\rangle$ where $\m{D}\in\mathsf{M}$ and $\psi_1\circ\varphi_1=\psi_2\circ\varphi_2$, i.e., so that the diagram in Figure~\ref{fig:AP}(i) commutes. 
A \emph{one-sided amalgam} of the span $\langle \varphi_1,\varphi_2\rangle$ in $\mathsf{M}$ is an ordered pair $\langle\psi_1\colon\m{B}\to\m{D},\psi_2\colon\m{C}\to\m{D}\rangle$ where $\m{D}\in\mathsf{M}$ of an injective homomorphism $\psi_1$ and a (not necessarily injective) homomorphism $\psi_2$ such that $\psi_1\circ\varphi_1=\psi_2\circ\varphi_2$, i.e., so that the diagram in Figure~\ref{fig:AP}(ii) commutes. 

A class $\K$ of similar algebras is said to have the \emph{amalgamation property} (or \prp{AP}) if every span in $\K$ has an amalgam in $\K$, and it is said to have the \emph{one-sided amalgamation property} (or $\prp{1AP}$) if every span in $\K$ has a one-sided amalgam in $\K$. When $\V$ is a variety, we denote by $\amal({\V})$ the subposet of $\slat(\V)$ consisting of the subvarieties of $\V$ with the \prp{AP}.

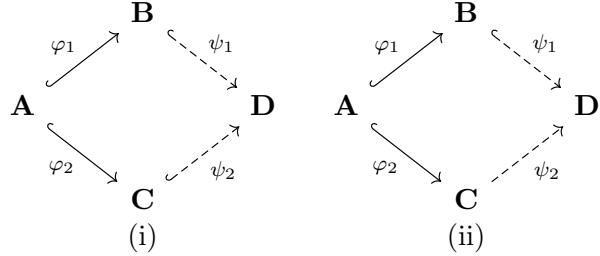
\begin{figure}
\centering
\begin{tabular}{cc}

\begin{tikzcd}
	& {\bf B} \\
	{\bf A} && {\bf D} \\
	& {\bf C}
	\arrow["{\varphi_1}", hook, from=2-1, to=1-2]
	\arrow["{\psi_1}", dashed, hook, from=1-2, to=2-3]
	\arrow["{\varphi_2}"', hook, from=2-1, to=3-2]
	\arrow["{\psi_2}"', dashed, hook, from=3-2, to=2-3]
\end{tikzcd}
&
\begin{tikzcd}
	& {\bf B} \\
	{\bf A} && {\bf D} \\
	& {\bf C}
	\arrow["{\varphi_1}", hook, from=2-1, to=1-2]
	\arrow["{\psi_1}", dashed, hook, from=1-2, to=2-3]
	\arrow["{\varphi_2}"', hook, from=2-1, to=3-2]
	\arrow["{\psi_2}"', dashed,  from=3-2, to=2-3]
\end{tikzcd} \\
(i) & (ii)
\end{tabular}
\caption{Commuting diagram illustrating the \prp{AP} and \prp{1AP}.}
\label{fig:AP}
\end{figure}

Determining whether a given variety $\V$ has the \prp{AP} is often extremely challenging. However, in \cite{Metcalfe2014} Metcalfe, Montagna, and Tsinakis give the following sufficient condition for a variety to have the \prp{AP} and use it to show the \prp{AP} for various varieties of residuated lattices.

\begin{proposition}[{\cite[Theorem 9]{Metcalfe2014}}]\label{p:condition}
Let $\K$ be a subclass of a variety $\V$ satisfying 
\begin{enumerate}[\normalfont (a)]
\item $\K$ is closed under isomorphisms and subalgebras; 
\item every subdirectly irreducible member of $\V$ belongs to $\K$;
\item for any $\m{B}\in\V$ and subalgebra $\m{A}$ of $\m{B}$, if $\The\in\Con{\m{A}}$ and  $\m{A}/\The\in \K$, then there exists a $\Phi\in\Con{\m{B}}$ such that $\Phi \cap A^2 =\The$ and $\m{B}/\Phi\in \K$; 
\item every span of in $\K$ has an amalgam in $\V$.
\end{enumerate}
Then $\V$ has the amalgamation property.
\end{proposition}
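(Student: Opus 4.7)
The plan is to reduce the amalgamation of an arbitrary span in $\V$ to a family of amalgamation problems for spans in $\K$, each of which has an amalgam in $\V$ by hypothesis (d), and then to combine these into a single amalgam using a product construction. Fix a span $\langle \varphi_1\colon\m{A}\to\m{B},\varphi_2\colon\m{A}\to\m{C}\rangle$ in $\V$, and identify $\m{A}$ with its images so that it appears as a subalgebra of both $\m{B}$ and $\m{C}$. Applying Birkhoff's subdirect representation theorem to $\m{B}$ and $\m{C}$ yields families of congruences $\{\Theta_j^B\}_{j\in J} \subseteq \Con{\m{B}}$ and $\{\Theta_k^C\}_{k\in K} \subseteq \Con{\m{C}}$ with $\bigcap_j \Theta_j^B = \Delta_{\m{B}}$ and $\bigcap_k \Theta_k^C = \Delta_{\m{C}}$, whose quotients are subdirectly irreducible and so lie in $\K$ by (b).

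For each $j\in J$, I set $\Theta_j := \Theta_j^B\cap A^2 \in\Con{\m{A}}$. The canonical map $\m{A}/\Theta_j \to \m{B}/\Theta_j^B$ is then an embedding, so $\m{A}/\Theta_j\in\K$ by (a), and (c) applied to $\m{A}\leq\m{C}$ supplies $\Phi_j^C \in\Con{\m{C}}$ with $\Phi_j^C\cap A^2 = \Theta_j$ and $\m{C}/\Phi_j^C\in\K$. This produces a span
\[
\m{B}/\Theta_j^B \hookleftarrow \m{A}/\Theta_j \hookrightarrow \m{C}/\Phi_j^C
\]
in $\K$ which admits an amalgam $\m{D}_j\in\V$ by (d). Symmetrically, starting from each $\Theta_k^C$ and extending $\Theta_k^C\cap A^2$ to a congruence on $\m{B}$ via (c), I obtain amalgams $\m{E}_k\in\V$ over analogous spans in $\K$.

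I then assemble $\m{D} := \prod_{j\in J}\m{D}_j \times \prod_{k\in K}\m{E}_k \in\V$ and define $\psi_1\colon\m{B}\to\m{D}$ and $\psi_2\colon\m{C}\to\m{D}$ coordinatewise as the appropriate quotient map followed by the amalgam injection in each component. Commutativity $\psi_1\circ\varphi_1 = \psi_2\circ\varphi_2$ then holds coordinatewise from the amalgam property of each $\m{D}_j$ and $\m{E}_k$.

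The principal technical obstacle, and the reason the symmetric construction is needed, is ensuring the simultaneous injectivity of both $\psi_1$ and $\psi_2$. Injectivity of $\psi_1$ follows from the $\m{D}_j$-coordinates: if $b\neq b'$ in $\m{B}$, then some $\Theta_j^B$ separates them since $\bigcap_j\Theta_j^B=\Delta_{\m{B}}$, and the injection $\m{B}/\Theta_j^B\hookrightarrow\m{D}_j$ then separates their images. Injectivity of $\psi_2$ follows by the symmetric argument on the $\m{E}_k$-coordinates. A one-sided construction using only the Birkhoff decomposition of $\m{B}$ would make $\psi_1$ injective but might leave $\psi_2$ non-injective, which is why the construction must be performed on both sides simultaneously.
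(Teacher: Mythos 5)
Your proof is correct and follows essentially the same route as the cited source \cite[Theorem 9]{Metcalfe2014} (the paper itself only quotes the result): separate pairs of distinct elements via congruences with subdirectly irreducible quotients, use (a)--(c) to build a span in $\K$ over each such quotient, amalgamate each via (d), and embed $\m{B}$ and $\m{C}$ into the product of all the resulting amalgams, performing the construction symmetrically on both sides to secure injectivity of both maps. The only cosmetic remark is that you should note explicitly that the product $\m{D}$ lies in $\V$ because $\V$ is a variety and each $\m{D}_j,\m{E}_k\in\V$.
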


In \cite{Fussner2024}, Proposition~\ref{p:condition} was sharpened: It is shown therein that item \text{(d)} can be replaced by the demand that only \emph{finitely generated} algebras in $\K$ have an amalgam in $\V$, with the same conclusion.

Furthermore, \cite{Fussner2024} gives the following necessary and sufficient conditions for a variety to have the \prp{AP}, under the assumption that it has the \prp{CEP} and the class of its finitely subdirectly irreducible members is closed under subalgebras.

\begin{theorem}[{\cite[Corollary 3.5]{Fussner2024}}]\label{t:APmain}
Let $\V$ be any variety with the \prp{CEP} such that $\Vfsi$ is closed under subalgebras. The following are equivalent:
\begin{enumerate}[\normalfont (1)]
\item	$\V$ has the amalgamation property.
\item	$\V$ has the one-sided amalgamation property.
\item	$\Vfsi$ has the one-sided amalgamation property.
\item	Every span in $\Vfsi$ has an amalgam in $\Vfsi\times\Vfsi$.
\item Every span of finitely generated algebras in $\Vfsi$ has an amalgam in $\V$.
\end{enumerate}
\end{theorem}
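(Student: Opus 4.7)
The plan is to establish the cycle $(1)\Rightarrow(2)\Rightarrow(3)\Rightarrow(4)\Rightarrow(5)\Rightarrow(1)$. Three of these are immediate: $(1)\Rightarrow(2)$ because any two-sided amalgam is in particular a one-sided amalgam; $(4)\Rightarrow(5)$ because $\Vfsi\times\Vfsi\subseteq\V$ and every span of finitely generated algebras is \emph{a fortiori} a span. For $(3)\Rightarrow(4)$, I would apply $(3)$ twice to a span $\langle\varphi_1,\varphi_2\rangle$ in $\Vfsi$---once as given (producing $\langle\psi_1,\psi_2\rangle$ in $\Vfsi$ with $\psi_1$ injective) and once with the two legs interchanged (producing $\langle\psi_1',\psi_2'\rangle$ in $\Vfsi$ with $\psi_2'$ injective)---and then combine them coordinatewise via $b\mapsto(\psi_1(b),\psi_1'(b))$ and $c\mapsto(\psi_2(c),\psi_2'(c))$, which yields a two-sided amalgam in $\Vfsi\times\Vfsi$.

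The substance lies in $(2)\Rightarrow(3)$ and $(5)\Rightarrow(1)$, both of which hinge on the same Zorn-plus-FSI trick. For $(2)\Rightarrow(3)$, given a span in $\Vfsi$ and a one-sided amalgam $\langle\psi_1\colon\m{B}\to\m{D},\psi_2\colon\m{C}\to\m{D}\rangle$ in $\V$ supplied by $(2)$, I would apply Zorn's lemma to the family of congruences $\alpha\in\Con{\m{D}}$ satisfying $\alpha\cap\psi_1(B)^2=0_{\psi_1(\m{B})}$. This family contains the identity and is closed under directed unions (directed joins of congruences are set-theoretic unions), so a maximal element $\alpha$ exists. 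One then argues $\m{D}/\alpha\in\Vfsi$: if $\beta\wedge\gamma=\alpha$ in $\Con{\m{D}}$ with $\beta,\gamma>\alpha$, then by maximality both $\beta\cap\psi_1(B)^2$ and $\gamma\cap\psi_1(B)^2$ are non-trivial congruences on $\psi_1(\m{B})\cong\m{B}$ whose meet is the identity, contradicting $\m{B}\in\Vfsi$. Composing $\psi_1,\psi_2$ with the quotient $\m{D}\twoheadrightarrow\m{D}/\alpha$ produces a one-sided amalgam in $\Vfsi$.

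For $(5)\Rightarrow(1)$, the plan is to verify the hypotheses of the sharpened Proposition~\ref{p:condition}---with (d) weakened to the finitely generated version per the remark following it---for the subclass $\K=\Vfsi$ of $\V$. Condition (a) is the standing assumption on $\Vfsi$; (b) holds because every subdirectly irreducible algebra is finitely subdirectly irreducible; and (d) is precisely $(5)$. The remaining condition (c) says: given $\m{A}\le\m{B}\in\V$ and $\The\in\Con{\m{A}}$ with $\m{A}/\The\in\Vfsi$, there exists $\Phi\in\Con{\m{B}}$ with $\Phi\cap A^2=\The$ and $\m{B}/\Phi\in\Vfsi$. This is where CEP enters: it furnishes some initial extension $\Phi_0$ of $\The$, and Zorn's lemma applied to the poset of congruences $\Phi\supseteq\Phi_0$ with $\Phi\cap A^2=\The$ yields a maximal $\Phi$; the quotient $\m{B}/\Phi$ is then FSI by a meet-decomposition argument essentially identical to the one used in $(2)\Rightarrow(3)$, transported through the correspondence between congruences of $\m{B}/\Phi$ and congruences of $\m{A}/\The$.

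The main obstacle I expect is the careful accounting of where the structural hypotheses are used: CEP is dispensable in $(2)\Rightarrow(3)$ (Zorn's lemma applies directly to $\Con{\m{D}}$) but indispensable in condition (c) of Proposition~\ref{p:condition}, since without an initial extension $\Phi_0$ to sit beneath, the maximality argument cannot even begin; meanwhile, closure of $\Vfsi$ under subalgebras is needed precisely to verify condition (a) of that proposition, and nowhere else in the argument.
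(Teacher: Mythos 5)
The paper does not prove Theorem~\ref{t:APmain} itself but imports it from \cite[Corollary~3.5]{Fussner2024}; your argument is correct and reconstructs essentially the strategy of that source using exactly the toolkit this paper describes---the sharpened Proposition~\ref{p:condition} for $(5)\Rightarrow(1)$, and Zorn-type maximal-congruence arguments producing finitely subdirectly irreducible quotients both for $(2)\Rightarrow(3)$ and for condition (c). The cycle closes correctly, and your accounting of where the \prp{CEP} and the closure of $\Vfsi$ under subalgebras are actually needed is accurate.
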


Theorem~\ref{t:APmain} is very useful to check if a variety $\V$ of residuated lattices that satisfies the assumptions has the \prp{AP}, since it reduces it to checking if $\Vfsi$ has the \prp{1AP}. However, since a one-sided amalgam contains an arbitrary homomorphism, to show that  $\Vfsi$ fails the \prp{1AP} it is usually  helpful to consider spans that also force the second homomorphism in a one-sided amalgam to be injective. In \cite{FS24}, it was noticed that this can be obtained by assuming that the second embedding in the span is \emph{essential}, leading to the notion of the \emph{essential amalgamation property}.

In detail, an embedding $\varphi\colon \m{A} \to \m{B}$ is called \emph{essential} if for each homomorphism $\psi\colon \m{B} \to \m{C}$, if $\psi\circ \varphi$ is an embedding, then $\psi$ is an embedding. If $\varphi\colon \m{A} \to \m{B}$ is the inclusion map, then it is essential if and only if for each $\Theta \in \Con{\m{B}}$ with $\Theta \neq \Delta_B$, also $\Theta \cap A^2 \neq \Delta_A$. A span $\langle \varphi_1, \varphi_2 \rangle$ is called an \emph{essential span} if $\varphi_2$ is an essential embedding. We say that a class of similar algebras $\K$ has the \emph{essential amalgamation property} (or \prp{EAP}) if each essential span in $\K$ has an amalgam in $\K$.

\begin{lemma}[\cite{FS24}]\label{l:1AP-EAP}
Let $\K$ be a class of similar algebras. 
\begin{enumerate}[\normalfont (i)]
\item If $\K$ has the \prp{1AP}, then it has the \prp{EAP}.
\item If $\K$ has the \prp{EAP} and $\hm(\K) = \K$, then $\K$ has the \prp{1AP}.
\end{enumerate}
\end{lemma}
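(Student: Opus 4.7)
Part (i) should be essentially immediate from unpacking the definitions. Given an essential span $\langle \varphi_1 \colon \m{A} \to \m{B}, \varphi_2 \colon \m{A} \to \m{C}\rangle$ in $\K$, the \prp{1AP} supplies a one-sided amalgam $\langle \psi_1 \colon \m{B} \to \m{D}, \psi_2 \colon \m{C} \to \m{D}\rangle$ in $\K$. The commutativity $\psi_2 \circ \varphi_2 = \psi_1 \circ \varphi_1$ shows that $\psi_2 \circ \varphi_2$ is a composition of embeddings, hence itself an embedding, and essentiality of $\varphi_2$ then forces $\psi_2$ to be an embedding. This upgrades the one-sided amalgam into a bona fide amalgam in $\K$, which is the \prp{EAP}.

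For part (ii), the idea is to reduce an arbitrary span to an essential one by quotienting the codomain of $\varphi_2$. Given a span $\langle \varphi_1 \colon \m{A} \to \m{B}, \varphi_2 \colon \m{A} \to \m{C}\rangle$ in $\K$, I would consider the family $\mathcal{F}$ of congruences $\Theta \in \Con{\m{C}}$ such that the composition $\pi_\Theta \circ \varphi_2 \colon \m{A} \to \m{C}/\Theta$ is injective; equivalently, $\Theta \cap \varphi_2(A)^2$ is the diagonal of $\varphi_2(A)$. This family is nonempty, since it contains the trivial congruence, and is closed under unions of chains (directed unions of congruences are congruences, and the diagonal intersection condition is clearly preserved). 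Zorn's lemma supplies a maximal element $\Theta^* \in \mathcal{F}$. Set $\varphi_2' := \pi_{\Theta^*} \circ \varphi_2 \colon \m{A} \to \m{C}/\Theta^*$, which is then an embedding, and observe that $\m{C}/\Theta^* \in \K$ by the assumption $\hm(\K) = \K$.

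The central claim is that $\varphi_2'$ is an \emph{essential} embedding. To see this, suppose $\psi \colon \m{C}/\Theta^* \to \m{D}$ is a homomorphism with $\psi \circ \varphi_2'$ an embedding. Via the correspondence theorem, $\ker \psi$ lifts to a congruence $\widetilde{\Phi} \in \Con{\m{C}}$ containing $\Theta^*$, with $\pi_{\widetilde{\Phi}} = \psi \circ \pi_{\Theta^*}$ up to the canonical isomorphism. Injectivity of $\psi \circ \varphi_2' = \pi_{\widetilde{\Phi}} \circ \varphi_2$ then places $\widetilde{\Phi}$ in $\mathcal{F}$, and maximality of $\Theta^*$ forces $\widetilde{\Phi} = \Theta^*$, so $\ker \psi$ is trivial and $\psi$ is an embedding. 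Thus $\langle \varphi_1, \varphi_2'\rangle$ is an essential span in $\K$, and by the \prp{EAP} it has an amalgam $\langle \psi_1 \colon \m{B} \to \m{D}, \psi_2' \colon \m{C}/\Theta^* \to \m{D}\rangle$ in $\K$. Setting $\psi_2 := \psi_2' \circ \pi_{\Theta^*}$, one checks that $\psi_1 \circ \varphi_1 = \psi_2' \circ \varphi_2' = \psi_2 \circ \varphi_2$, so $\langle \psi_1, \psi_2\rangle$ is a one-sided amalgam of the original span in $\K$.

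The main technical point — and the only step where anything nontrivial happens — is verifying that $\varphi_2'$ is essential, which is precisely where maximality of $\Theta^*$ is used; I do not anticipate difficulty in the Zorn application itself, since congruence lattices of arbitrary algebras are algebraic and hence closed under directed unions. The use of $\hm(\K) = \K$ enters only to keep the quotient $\m{C}/\Theta^*$ (and hence the amalgam produced by the \prp{EAP}) inside $\K$.
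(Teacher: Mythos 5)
Your proof is correct and follows essentially the same route as the paper: part (i) is identical, and part (ii) factors $\varphi_2$ through a quotient of $\m{C}$ so that the resulting span becomes essential, then applies the \prp{EAP} and composes back with the projection. The only difference is that the paper cites a lemma of Gr\"atzer for the existence of such an essential quotient, whereas you prove it directly via Zorn's lemma on the congruences of $\m{C}$ that meet $\varphi_2(A)^2$ trivially --- a correct inline proof of exactly that cited fact.
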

\begin{proof}
(i) Let $\langle \varphi_1 \colon \m{A} \to \m{B},\varphi_2\colon \m{A} \to \m{C} \rangle$ be an essential span in $\K$. By assumption, this span has a one-sided amalgam $\langle\psi_1\colon\m{B}\to\m{D},\psi_2\colon\m{C}\to\m{D}\rangle$  in $\K$. But then, $\psi_2\circ \varphi_2 = \psi_1 \circ \varphi_1$ is an embedding and, since $\varphi_2$ is essential, also $\psi_2$ is an embedding. So $\langle \psi_1, \psi_2 \rangle$ is an amalgam of the span.

(ii) Let $\langle \varphi_1 \colon \m{A} \to \m{B},\varphi_2\colon \m{A} \to \m{C} \rangle$ be a span in $\K$. Then, by \cite[Lemma 3]{Graetzer1971}, there exists a congruence $\Theta \in \Con{\m{C}}$ such that for the natural projection $\pi \colon \m{C} \to  \m{C}/\Theta$, the map $\pi\circ \varphi_2$ is an essential embedding. So the span $\langle \varphi_1,\pi \circ \varphi_2 \rangle$ is essential and, by assumption, it has an amalgam $\langle\psi_1\colon\m{B}\to\m{D},\psi_2\colon\m{C}/\theta \to\m{D}\rangle$ in $\K$ (see Figure~\ref{fig:EAP}). But then $\langle \psi_1, \psi_2\circ \pi \rangle$ is a one-sided amalgam of $\langle\varphi_1,\varphi_2 \rangle$.
\end{proof}

\begin{figure}
\[\begin{tikzcd}
	& {\bf B} \\
	{\bf A} && {\bf D} \\
	{\bf C}& {\bf C}/\Theta
	\arrow["{\varphi_1}", hook, from=2-1, to=1-2]
	\arrow["{\psi_1}", dashed, hook, from=1-2, to=2-3]
	\arrow["{\varphi_2}"', hook, from=2-1, to=3-1]
	\arrow[""',  hook, from=2-1, to=3-2]
	\arrow["{\pi}"',  two heads,  from=3-1, to=3-2]
	\arrow["{\psi_2}"', dashed, hook, from=3-2, to=2-3]
\end{tikzcd}
\]
\caption{Commuting diagram illustrating the proof of Lemma~\ref{l:1AP-EAP}(ii).}
\label{fig:EAP}
\end{figure}
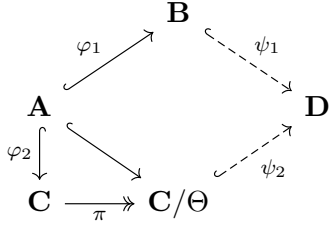

Note that for (ii) in Lemma~\ref{l:1AP-EAP}, the assumption that $\hm(\K) = \K$ is crucial for being able to factor through an essential extension. In conjunction with Theorem~\ref{t:APmain}, Lemma~\ref{l:1AP-EAP} yields the following result.

\begin{proposition}[\cite{FS24}]\label{p:ess-AP}
Let $\V$ be a variety with the \prp{CEP} and $\hm\sub(\Vfsi) = \Vfsi$. Then the following are equivalent.
\begin{enumerate}[\normalfont (1)]
\item $\V$ has the amalgamation property.
\item $\Vfsi$ has the essential amalgamation property.
\item Every essential span of finitely generated algebras in $\Vfsi$ has an amalgam in $\Vfsi$.
\end{enumerate}
\end{proposition}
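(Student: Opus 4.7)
The plan is to leverage Theorem~\ref{t:APmain} and Lemma~\ref{l:1AP-EAP} via the observation that the hypothesis $\hm\sub(\Vfsi) = \Vfsi$ yields both $\sub(\Vfsi) = \Vfsi$ (placing us in the setting of Theorem~\ref{t:APmain}) and $\hm(\Vfsi) = \Vfsi$ (the additional hypothesis needed for Lemma~\ref{l:1AP-EAP}(ii) applied to $\K = \Vfsi$). With these in hand, (1)$\,\Leftrightarrow\,$(2) is essentially immediate: Theorem~\ref{t:APmain} identifies (1) with $\Vfsi$ having the \prp{1AP}, while the two clauses of Lemma~\ref{l:1AP-EAP} together identify the \prp{1AP} of $\Vfsi$ with its \prp{EAP}. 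The implication (2)$\,\Rightarrow\,$(3) is a trivial weakening, so the substantive direction is (3)$\,\Rightarrow\,$(1).

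For (3)$\,\Rightarrow\,$(1), I would aim to verify condition (5) of Theorem~\ref{t:APmain}: every span of finitely generated algebras in $\Vfsi$ has an amalgam in $\V$. Let $\langle \varphi_1 \colon \m{A} \to \m{B}, \varphi_2 \colon \m{A} \to \m{C} \rangle$ be such a span. I would apply the Gr\"atzer factorization used in the proof of Lemma~\ref{l:1AP-EAP}(ii) twice---once on each leg---to obtain congruences $\Theta \in \Con{\m{C}}$ and $\Theta' \in \Con{\m{B}}$ whose canonical projections $\pi$ and $\pi'$ make $\pi\circ\varphi_2$ and $\pi'\circ\varphi_1$ essential embeddings. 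Since $\hm\sub(\Vfsi)=\Vfsi$, the quotients $\m{C}/\Theta$ and $\m{B}/\Theta'$ belong to $\Vfsi$, and both remain finitely generated as quotients of finitely generated algebras. Applying (3) to the essential span $\langle \varphi_1, \pi\circ\varphi_2\rangle$, and separately to $\langle \varphi_2, \pi'\circ\varphi_1\rangle$ (which is essential once the legs are swapped so that the essential map sits in second position), yields two amalgams in $\Vfsi$ that, when composed with the projections, furnish two one-sided amalgams of the original span: one whose first leg is injective and one whose second leg is injective.

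I would then form the pushout $\m{D}^\ast$ of $\langle \varphi_1, \varphi_2\rangle$ in the variety $\V$, together with its canonical morphisms $\iota_1 \colon \m{B} \to \m{D}^\ast$ and $\iota_2 \colon \m{C} \to \m{D}^\ast$. By the universal property, each of the two one-sided amalgams factors through $\m{D}^\ast$: from the first cocone, the injective first-leg embedding factors as the composite of $\iota_1$ with a homomorphism $\m{D}^\ast\to \m{D}$, forcing $\iota_1$ to be injective; symmetrically, from the second cocone $\iota_2$ is injective. Hence $\langle \iota_1,\iota_2\rangle$ is a genuine amalgam of the original span in $\V$, verifying condition (5) of Theorem~\ref{t:APmain} and thus (1).

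The hard part will be the symmetric deployment of (3): since the definition of an essential span singles out the second leg, producing a one-sided amalgam with the \emph{second} leg injective requires first factoring $\varphi_1$ through its essential quotient, then swapping the two legs of the span before reapplying (3), and finally reinterpreting the resulting amalgam as a cocone over the original span. Once both one-sided amalgams are in hand, the concluding pushout argument is a routine application of universal properties and produces the desired amalgam in $\V$.
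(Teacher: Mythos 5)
Your proof is correct and follows exactly the route the paper indicates for this result: combining Theorem~\ref{t:APmain} with Lemma~\ref{l:1AP-EAP}, using that $\hm\sub(\Vfsi)=\Vfsi$ supplies the hypotheses of both, and deploying Gr\"atzer's essential-quotient factorization on each leg to reduce (3) to condition (5) of Theorem~\ref{t:APmain}. The only cosmetic difference is your concluding pushout, which can be replaced by the more elementary step of pairing the two one-sided amalgams coordinatewise into their product $\m{D}_1\times\m{D}_2$ (both induced maps are then injective because each has one injective coordinate) --- this is precisely what condition (4) of Theorem~\ref{t:APmain} is pointing at.
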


For any variety of semilinear residuated lattices $\V$, we denote the class of its totally ordered members by $\chain{\V}$. It is well-known that in this case $\Vfsi = \chain{\V} = \hm\sub(\chain{\V})$. In this setting, the following corollary is immediate.

\begin{corollary}[\cite{FS24}]\label{c:ess-AP-sem}
Let $\V$ be any variety of semilinear residuated lattices with the \prp{CEP}.  The following are equivalent.
\begin{enumerate}[\normalfont (1)]
\item $\V$ has the amalgamation property.
\item $\chain{\V}$ has the essential amalgamation property.
\item Every essential span of finitely generated algebras in $\chain{\V}$ has an amalgam in $\chain{\V}$.
\end{enumerate}
\end{corollary}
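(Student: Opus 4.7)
The plan is to derive this corollary directly from Proposition~\ref{p:ess-AP} by verifying that its hypotheses are automatically met in the semilinear setting and translating its conclusion into the $\chain{\V}$ notation. In other words, the only work is to check that $\Vfsi = \chain{\V}$ and that $\hm\sub(\chain{\V}) = \chain{\V}$, after which the three items of Proposition~\ref{p:ess-AP} become precisely the three items claimed.

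For the first identity, the excerpt has already recorded in the paragraph following Example~\ref{ex:CEPfail} that for any $\V\in\slat(\SemRL)$, the class $\Vfsi$ coincides with the class of totally ordered members of $\V$, which is $\chain{\V}$ by definition. For the second identity, I would observe that the property of being totally ordered is preserved under both subalgebras and surjective homomorphisms of residuated lattices: a subalgebra of a chain is clearly a chain, and any homomorphic image of a totally ordered residuated lattice remains totally ordered because the lattice order is reflected by the image of the meet or join operation. Consequently $\sub(\chain{\V})\subseteq \chain{\V}$ and $\hm(\chain{\V})\subseteq \chain{\V}$, and since the reverse inclusions are trivial, we obtain $\hm\sub(\chain{\V}) = \chain{\V}$.

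Once these two equalities are in hand, the hypotheses of Proposition~\ref{p:ess-AP} are satisfied by $\V$: it has the \prp{CEP} by assumption and $\hm\sub(\Vfsi) = \Vfsi$ by the previous paragraph. Applying that proposition then yields the equivalence of (1) $\V$ has the amalgamation property; (2) $\Vfsi$ has the essential amalgamation property; and (3) every essential span of finitely generated algebras in $\Vfsi$ has an amalgam in $\Vfsi$. Rewriting $\Vfsi$ as $\chain{\V}$ in clauses (2) and (3) gives exactly the statement of the corollary.

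Since this is a straightforward specialization of an already-established proposition, there is no real obstacle to overcome; the only thing to be a bit careful about is simply invoking the closure of $\chain{\V}$ under $\hm\sub$ and making sure the semilinear identification $\Vfsi = \chain{\V}$ is cited correctly. No further computation or structural argument is needed.
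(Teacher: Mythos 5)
Your proposal is correct and takes essentially the same route as the paper: the paper simply records that $\Vfsi = \chain{\V} = \hm\sub(\chain{\V})$ for any semilinear variety and then declares the corollary immediate from Proposition~\ref{p:ess-AP}. You have just spelled out the routine verification of those two identities, which is exactly the intended argument.
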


As discussed in Section~\ref{sec:background}, every variety of commutative residuated lattices has the \prp{CEP}, so the previous corollary may be applied in the commutative setting. Here the result is significant enough to our discussion to state it explicitly.

\begin{corollary}[\cite{FS24}]\label{c:ess-AP-comsem}
Let $\V$ be any variety of commutative semilinear residuated lattices. The following are equivalent.
\begin{enumerate}[\normalfont (1)]
\item $\V$ has the amalgamation property.
\item $\chain{\V}$ has the essential amalgamation property.
\item Every essential span of finitely generated algebras in $\chain{\V}$ has an amalgam in $\chain{\V}$.
\end{enumerate}
\end{corollary}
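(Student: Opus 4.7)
The plan is to obtain Corollary~\ref{c:ess-AP-comsem} as an immediate specialization of Corollary~\ref{c:ess-AP-sem}. Indeed, Corollary~\ref{c:ess-AP-sem} already gives the desired equivalence for every variety $\V$ of semilinear residuated lattices that enjoys the \prp{CEP}, so the only thing one needs to verify is that a variety $\V$ of commutative semilinear residuated lattices automatically satisfies this hypothesis.

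For this, I would invoke the lemma stated just after Example~\ref{ex:CEPfail} (namely \cite[Lemma~3.57]{GalatosJipsenKowalskiOno2007}), which asserts that every commutative residuated lattice has the congruence extension property. Since the \prp{CEP} is an algebra-by-algebra condition, and each member of a commutative variety $\V$ is in particular a commutative residuated lattice, it follows that every algebra in $\V$ has the \prp{CEP}; thus $\V$ has the \prp{CEP} in the sense used in the statement of Corollary~\ref{c:ess-AP-sem}.

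With this verification in hand, the three conditions (1)--(3) of Corollary~\ref{c:ess-AP-comsem} are literally the same as the three conditions (1)--(3) of Corollary~\ref{c:ess-AP-sem} applied to our particular $\V$, and their equivalence follows. There is no real obstacle here: the content of the corollary lies in the earlier Corollary~\ref{c:ess-AP-sem} and in the observation that commutativity forces the \prp{CEP} to hold, so one really just needs to write a one- or two-sentence proof pointing the reader to these ingredients. The only subtlety worth flagging explicitly is that one is applying the \prp{CEP} hypothesis at the level of the variety (every subalgebra/congruence restriction situation inside $\V$), which is exactly what the per-algebra version from \cite{GalatosJipsenKowalskiOno2007} supplies.
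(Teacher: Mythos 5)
Your proposal is correct and is exactly the paper's own argument: the corollary is stated as an immediate specialization of Corollary~\ref{c:ess-AP-sem}, justified by the observation (via \cite[Lemma~3.57]{GalatosJipsenKowalskiOno2007}) that every commutative residuated lattice, hence every variety of commutative semilinear residuated lattices, has the \prp{CEP}. Nothing is missing.
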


In some applications, checking the \prp{EAP} directly for $\Vfsi$ of a variety $\V$ is easier if we first decompose $\Vfsi$ into subclasses and check the \prp{EAP} for these. The following brief interlude explains how this can be done.

First, we say that a subclass $\K_1$ of a class $\K_2$ of similar algebras is \emph{essentially closed in $\K_2$} if for each $\m{A} \in \K_1$ and essential embedding $\varphi\colon \m{A} \to \m{B}$ with $\m{B} \in \K_2$ also $\m{B} \in \K_1$. 

\begin{remark}
Let $\K_1$, $\K_2$, $\K_3$ be classes of similar algebras.
\begin{enumerate}[\normalfont (i)]
\item If $\K_1$ is essentially closed in $\K_2$ and $\K_3$, then $\K_1$ is essentially closed in $\K_2 \cup \K_3$.
\item If $\K_1$ and $\K_2$ are essentially closed in $\K_3$, then $\K_1 \cap \K_2$ and $\K_1 \cup \K_2$ are essentially closed in $\K_3$.
\item If $\K_1$ is essentially closed in $\K_2$ and $\K_2$ is essentially closed in $\K_3$, then $\K_1$ is essentially closed in $\K_3$.
\end{enumerate}
\end{remark}

\begin{proposition}[\cite{FS24}]
Let $\K$ be a class of algebras such that $\K = \K_1 \cup \K_2$, $\iso\sub(\K_i) = \K_i$ for $i=1,2$, and $\K_1 \cap \K_2$ is essentially closed in $\K$. 
\begin{enumerate}[\normalfont (i)]
\item If $\K_1$ and $\K_2$ have the essential amalgamation property, then $\K$ has the essential amalgamation property. 
\item If $\K$ and $\K_1 \cap \K_2$ have the essential amalgamation property, then $\K_1$ and $\K_2$ have the essential amalgamation property.
\end{enumerate} 
\end{proposition}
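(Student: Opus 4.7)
The plan is to prove (i) by a case analysis on the position of the apex $\m{A}$ within $\K_1 \cup \K_2$, using the closure of each $\K_i$ under isomorphisms and subalgebras to propagate membership along the two embeddings of the span; the hypothesis that $\K_1 \cap \K_2$ is essentially closed in $\K$ is reserved for the case where $\m{A}$ lies in $\K_1 \cap \K_2$. For (ii), the plan is to pull an amalgam out of $\K$ using its \prp{EAP}, then observe that whenever this amalgam escapes $\K_1$, the entire span must collapse into $\K_1 \cap \K_2$, where \prp{EAP} finishes the job. Throughout, I would emphasize that essentiality of an embedding is an intrinsic property of the morphism (the definition quantifies over all homomorphisms out of its codomain without restriction on the ambient class), so the spans in question remain essential when viewed in any subclass containing them.

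For (i), let $\langle \varphi_1\colon \m{A}\to\m{B},\, \varphi_2\colon \m{A} \to \m{C}\rangle$ be an essential span in $\K$. If $\m{A} \in \K_1 \setminus \K_2$, then since $\iso\sub(\K_2) = \K_2$ and $\m{A}$ is isomorphic to a subalgebra of each of $\m{B}$ and $\m{C}$, neither $\m{B}$ nor $\m{C}$ can lie in $\K_2$; both therefore lie in $\K_1$, and \prp{EAP} of $\K_1$ furnishes an amalgam in $\K_1 \subseteq \K$. The case $\m{A} \in \K_2 \setminus \K_1$ is symmetric. If $\m{A} \in \K_1 \cap \K_2$, essential closedness applied to the essential embedding $\varphi_2\colon \m{A} \to \m{C}$ (with $\m{C} \in \K$) yields $\m{C} \in \K_1 \cap \K_2$; since $\m{B} \in \K_1 \cup \K_2$, without loss of generality $\m{B} \in \K_1$, so the span again lies entirely in $\K_1$, and \prp{EAP} of $\K_1$ delivers the desired amalgam.

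For (ii), fix an essential span $\langle \varphi_1, \varphi_2\rangle$ in $\K_1$. Viewing it as an essential span in $\K$, the \prp{EAP} of $\K$ produces an amalgam $\langle \psi_1\colon \m{B}\to\m{D},\, \psi_2\colon \m{C}\to\m{D}\rangle$ with $\m{D}\in\K$. If $\m{D}\in\K_1$, we are done; otherwise $\m{D}\in\K_2\setminus\K_1$, and the embeddings $\psi_1, \psi_2$ together with $\iso\sub(\K_2)=\K_2$ force $\m{B}, \m{C} \in \K_2$, and then $\m{A} \in \K_2$ via $\varphi_1$. The original span therefore lies in $\K_1 \cap \K_2$, so \prp{EAP} of $\K_1 \cap \K_2$ delivers an amalgam in $\K_1 \cap \K_2 \subseteq \K_1$; the argument for $\K_2$ is symmetric. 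The main obstacle, to the extent that one exists, is merely to track the interplay between essentiality, embeddings, and the $\iso\sub$-closure properties—none of these steps is deep, but neglecting the intrinsic nature of essentiality would derail the argument.
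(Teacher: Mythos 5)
Your proof is correct and follows essentially the same route as the paper's: part (i) reduces to showing every essential span in $\K$ lives entirely in some $\K_i$ (you organize the cases by the location of the apex $\m{A}$, the paper by the locations of $\m{B}$ and $\m{C}$, but the ingredients---$\iso\sub$-closure plus essential closedness of $\K_1\cap\K_2$---are used identically), and part (ii) matches the paper's argument verbatim in substance. Your explicit remark that essentiality is an intrinsic property of the morphism, independent of the ambient class, is a point the paper leaves tacit and is worth making.
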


\begin{proof}
(i) It is enough to show that every essential span in $\K$ is an essential span in $\K_1$ or $\K_2$. So let $\langle \varphi_1 \colon \m{A} \to \m{B},\varphi_2\colon \m{A} \to \m{C} \rangle$ be an essential span in $\K$. If for $i \in \{1,2\}$,  $\m{B},\m{C} \in \K_i$, then, since $\iso\sub(\K_i) = \K_i$, also $\m{A} \in \K_i$ and $\langle \varphi_1,\varphi_2 \rangle$ is an essential span in $\K_i$. Otherwise, without loss of generality, $\m{B} \in \K_1$ and $\m{C} \in \K_2$. Thus, since $\iso\sub(\K_i) = \K_i$ for $i=1,2$, $\m{A} \in \K_1 \cap \K_2$. But then, since $\K_1 \cap \K_2$ is essentially closed in $\K$, $\m{C} \in \K_1 \cap \K_2 \subseteq \K_1$. so $\langle \varphi_1,\varphi_2 \rangle$ is an essential span in $\K_1$.

(ii) We show that $K_1$ has the \prp{EAP}. The proof for $\K_2$ is analogous. Let $\langle \varphi_1 \colon \m{A} \to \m{B},\varphi_2\colon \m{A} \to \m{C} \rangle$ be an essential span in $\K_1$. Then, by assumption this span has an amalgam $\langle \psi_1 \colon \m{B} \to \m{D}, \psi_2 \colon \m{C} \to \m{D} \rangle$ in $\K$. If $\m{D} \in \K_1$, then we are done. Otherwise, $\m{D} \in \K_2$ and, since $\iso\sub(\K_2) = \K_2$, $\m{A}, \m{B}, \m{C} \in \K_1 \cap \K_2$. Hence, by assumption, the span also has an amalgam in $\K_1 \cap \K_2 \subseteq \K_1$.
\end{proof}

Recall from Section~\ref{sec:background} that not every variety of (semilinear) residuated lattices has the \prp{CEP}. Thus, the results discussed above cannot be applied in general. The next lemma illustrates that there is still a way to transfer the failure of the \prp{AP} from the subdirectly irreducibles to the whole variety even without assuming anything about the given variety.

\begin{lemma}\label{l:doubly-essential}
Let $\V$ be a variety and let $\langle \varphi_1 \colon \m{A} \to \m{B},\varphi_2\colon \m{A} \to \m{C} \rangle$ be an essential span in $\V$ such that $\m{B}\in \Vsi$.
Then $\langle \varphi_1,\varphi_2\rangle$ has an amalgam in $\V$ if and only if it has an amalgam in $\Vsi$.
\end{lemma}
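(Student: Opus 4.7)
The backward direction is immediate since $\Vsi \subseteq \V$. For the forward direction, suppose $\langle \psi_1 \colon \m{B} \to \m{D}, \psi_2 \colon \m{C} \to \m{D} \rangle$ is an amalgam of $\langle \varphi_1, \varphi_2 \rangle$ in $\V$. The plan is to quotient $\m{D}$ by a suitable congruence to land in $\Vsi$ while preserving all four embeddings.

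The case where $\m{B}$ is trivial reduces to a trivial amalgam, so assume $\m{B}$ is nontrivial and let $\mu$ be its monolith, which exists since $\m{B} \in \Vsi$. Choose $(b_1, b_2) \in \mu \setminus \Delta_B$ and set $d_i := \psi_1(b_i)$; injectivity of $\psi_1$ gives $d_1 \neq d_2$. By a standard Zorn's lemma argument, there exists a congruence $\Phi \in \Con{\m{D}}$ that is maximal with respect to the property $(d_1, d_2) \notin \Phi$, and for any such $\Phi$ the quotient $\m{D}/\Phi$ is subdirectly irreducible and so lies in $\Vsi$. Let $\pi \colon \m{D} \to \m{D}/\Phi$ be the canonical projection.

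I then claim that $\langle \pi \circ \psi_1, \pi \circ \psi_2 \rangle$ is the desired amalgam in $\Vsi$. Commutativity is automatic:
\[
(\pi \circ \psi_1) \circ \varphi_1 = \pi \circ (\psi_2 \circ \varphi_2) = (\pi \circ \psi_2) \circ \varphi_2.
\]
For the injectivity of $\pi \circ \psi_1$, observe that the kernel of $\pi \circ \psi_1$ is a congruence on $\m{B}$ which, by construction, does not contain the pair $(b_1, b_2) \in \mu$; since $\mu$ is the least nontrivial congruence on $\m{B}$, this kernel must equal $\Delta_B$. For the injectivity of $\pi \circ \psi_2$, note that by the commuting square $(\pi \circ \psi_2) \circ \varphi_2 = (\pi \circ \psi_1) \circ \varphi_1$ is a composition of embeddings, hence an embedding; since $\varphi_2$ is essential, $\pi \circ \psi_2$ itself must be an embedding.

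The central subtlety is the choice of $\Phi$: it must be chosen large enough for the quotient to be subdirectly irreducible (achieved via maximality), yet small enough to preserve injectivity on both sides. Injectivity of $\pi \circ \psi_1$ is handled by tying the forbidden pair $(d_1, d_2)$ to the monolith of $\m{B}$, while injectivity of $\pi \circ \psi_2$ is precisely where the hypothesis that $\varphi_2$ is essential does its work; without that hypothesis the argument breaks down, which explains why the lemma is formulated for essential spans.
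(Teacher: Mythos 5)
Your proof is correct and takes essentially the same approach as the paper: both arguments hinge on using the monolith of $\m{B}$ to guarantee injectivity on the $\m{B}$-side and essentiality of $\varphi_2$ to transfer injectivity to the $\m{C}$-side. The only (cosmetic) difference is that you produce the subdirectly irreducible quotient via a congruence of $\m{D}$ maximal among those separating $(d_1,d_2)$, whereas the paper writes $\m{D}$ as a product of subdirectly irreducibles and projects onto a factor separating the monolith pair---two standard forms of the same Birkhoff-style device.
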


\begin{proof}
The right-to-left implication is trivial. For the converse, suppose that $\langle \varphi_1,\varphi_2\rangle$ has an amalgam $\langle \psi_1 \colon \m{B} \to \m{D},\psi_2\colon \m{C} \to \m{D} \rangle$ in $\V$. Then we may assume that $\m{D} = \prod_{i\in I} \m{D}_i$ with $\m{D}_i \in \Vsi$. Moreover, since $\m{B} \in \Vsi$, there exist $a,b \in B$ such that  $\Theta(a,b)$ is the monolith of the congruence lattice of $\m{B}$. Now, since $\psi_1$ is an embedding, there exists an $i\in I$ such that for the $i$th projection map $\pi_i \colon \m{D} \to \m{D}_i$, $(\pi_i\circ \psi_1)(a) \neq (\pi_i\circ \psi_1)(b)$. Thus $\pi_i\circ \psi_1$ is an embedding. But then also  $\pi_i\circ \psi_1 \circ \varphi_1 =  \pi_i\circ \psi_2 \circ \varphi_2$ is an embedding. Hence, since  $\varphi_2$ is essential, also $\pi_i\circ \psi_2$ is an embedding and $\langle\pi_i\circ\psi_1,\pi_i\circ\psi_2 \rangle$ is an amalgam of $\langle \varphi_1,\varphi_2\rangle$ in $\Vsi$.
\end{proof}

In particular, it follow from Lemma~\ref{l:doubly-essential} that if $\Vsi$ fails the \prp{EAP}, then $\V$ fails the \prp{AP}.

\begin{corollary}
Let $\V$ be a variety. If $\Vsi$ does not have the essential amalgamation property, then $\V$ does not have the amalgamation property.
\end{corollary}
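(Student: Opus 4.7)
The plan is to extract the result directly from Lemma~\ref{l:doubly-essential}, using nothing more than the fact that essentiality of an embedding is an intrinsic property of that embedding and does not depend on an ambient class. First I would unfold the hypothesis: assuming $\Vsi$ fails the \prp{EAP}, there exists an essential span $\langle \varphi_1 \colon \m{A} \to \m{B},\varphi_2\colon \m{A} \to \m{C}\rangle$ with $\m{A},\m{B},\m{C}\in\Vsi$ that admits no amalgam in $\Vsi$.

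Next I would observe that this same span can be viewed in $\V$. The embedding $\varphi_2$ remains essential when $\m{C}$ is regarded as an object of $\V$, because the defining condition of essentiality quantifies over homomorphisms out of $\m{C}$ into arbitrary algebras of the signature, and does not reference the class to which $\m{C}$ belongs. Therefore $\langle \varphi_1,\varphi_2\rangle$ is an essential span in $\V$ in which, crucially, $\m{B}\in\Vsi$.

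The final step is to invoke Lemma~\ref{l:doubly-essential} on this span, which tells us that it has an amalgam in $\V$ if and only if it has an amalgam in $\Vsi$. Since we chose the span to have no amalgam in $\Vsi$, it follows that it has no amalgam in $\V$, and hence $\V$ fails the \prp{AP}.

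I do not anticipate any real obstacle here, since the corollary is essentially a reformulation of Lemma~\ref{l:doubly-essential}: all the substantive work (the subdirect decomposition argument and the projection onto a factor where the monolith survives) was already carried out in proving that lemma. The only mildly delicate point to spell out is the ambient-class-independence of essentiality, which warrants one explicit sentence in the write-up.
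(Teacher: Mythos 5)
Your proposal is correct and is exactly the argument the paper intends: the paper gives no separate proof of the corollary, presenting it as an immediate consequence of Lemma~\ref{l:doubly-essential}, and your unfolding (a witnessing essential span in $\Vsi$ is an essential span in $\V$ with $\m{B}\in\Vsi$, so the lemma transfers the non-existence of an amalgam from $\Vsi$ to $\V$) is the intended reading. Your explicit remark that essentiality of $\varphi_2$ is an intrinsic property of the embedding, not relative to the ambient class, is the one point worth spelling out, and you handle it correctly.
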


The next method for establishing the \prp{AP} for a variety $\V$ is model-theoretic and operates by demonstrating quantifier elimination for a suitable elementary subclass. In the realm of residuated lattices, this method was notably applied in \cite{Weispfenning1989} to show that the variety of abelian $\ell$-groups has the \prp{AP} (see Section~\ref{sec:lgroups}) and in \cite{MarMet2012} to classify the varieties of Sugihara monoids with the \prp{AP} (see Section~\ref{sec:sugihara}). See also \cite{CMM11,Marchioni2012} for further applications of this technique.

Let  $\K$ be an elementary class of algebras. We say that $\K$ has quantifier elimination if for each first-order formula $\phi(\overline{x})$ there exists a quantifier-free first-order formula $\psi(\overline{x})$ such that $\K \models (\forall \overline{x})(\phi(\overline{x}) \Leftrightarrow \psi(\overline{x}))$

\begin{theorem}[{\cite[Theorem 8.4.1]{Hodges1993}}]\label{t:QE-AP}
Let $\K$ be an elementary class of algebras with quantifier elimination. Then $\iso\sub(\K)$ has the amalgamation property.
\end{theorem}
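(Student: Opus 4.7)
The plan is to prove the theorem via the classical atomic-diagram argument from model theory, using quantifier elimination to transfer existential facts across the span. Let $T$ be a first-order theory with $\K = \mathrm{Mod}(T)$, which exists because $\K$ is elementary. Given a span $\langle \varphi_1 \colon \m{A} \to \m{B},\varphi_2\colon \m{A} \to \m{C} \rangle$ in $\iso\sub(\K)$, I would assume without loss of generality that $\varphi_1$ and $\varphi_2$ are inclusions, and pick $\m{B}',\m{C}' \in \K$ with $\m{B}\leq \m{B}'$ and $\m{C}\leq \m{C}'$. It then suffices to produce an amalgam of $\m{B}'$ and $\m{C}'$ over $\m{A}$ inside $\K$, since restricting this amalgam to $\m{B}$ and $\m{C}$ yields an amalgam of the original span in $\iso\sub(\K)$.

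Next, I would expand the signature by disjoint fresh constants for the elements of $B' \setminus A$ and $C' \setminus A$ together with a common set of constants for the elements of $A$. Writing $D(\m{B}')$ and $D(\m{C}')$ for the resulting atomic diagrams, any model of $T \cup D(\m{B}') \cup D(\m{C}')$ yields, via the interpretations of the added constants, embeddings of $\m{B}'$ and $\m{C}'$ into its base-signature reduct that agree on $\m{A}$; this reduct lies in $\K$ and is the desired amalgam. The problem thus reduces to showing that $T \cup D(\m{B}') \cup D(\m{C}')$ is consistent.

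The heart of the argument is this consistency check, carried out by compactness. Suppose the theory were inconsistent; then some finite conjunction $\theta_B(\bar{a},\bar{b})$ of members of $D(\m{B}')$ and some finite conjunction $\theta_C(\bar{a},\bar{c})$ of members of $D(\m{C}')$ would give $T \models \forall \bar{x}\,\forall \bar{y}\,\forall \bar{z}\,\neg(\theta_B(\bar{x},\bar{y}) \wedge \theta_C(\bar{x},\bar{z}))$, where $\bar{a}$ lists the $A$-parameters and $\bar{b},\bar{c}$ list the remaining ones. Now $\m{B}'$ witnesses $\exists \bar{y}\, \theta_B(\bar{a},\bar{y})$. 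By quantifier elimination, there is a quantifier-free $\sigma(\bar{x})$ with $T \models \forall \bar{x}\,(\exists \bar{y}\, \theta_B(\bar{x},\bar{y}) \leftrightarrow \sigma(\bar{x}))$, so $\m{B}' \models \sigma(\bar{a})$. Because $\sigma$ is quantifier-free and $\bar{a}$ lies in the common substructure $\m{A}$, satisfaction of $\sigma(\bar{a})$ transfers to $\m{A}$ and hence to $\m{C}'$. Thus $\m{C}'$ admits witnesses $\bar{d}$ with $\m{C}' \models \theta_B(\bar{a},\bar{d}) \wedge \theta_C(\bar{a},\bar{c})$, contradicting $\m{C}' \models T$.

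The principal obstacle is bookkeeping: ensuring that the identification of $A$-constants is respected uniformly across both diagrams, and that the existential closure of $\theta_B$ is taken in the parameters $\bar{a}$ common to both sides so that quantifier elimination may be deployed. Conceptually, the proof is a direct application of the well-known implication that quantifier elimination forces substructure-completeness of $T$, which in turn gives the amalgamation property for $T$-models over common substructures.
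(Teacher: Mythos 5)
Your argument is correct: the reduction to amalgamating extensions $\m{B}',\m{C}'\in\K$ over the common substructure $\m{A}$, the joint-diagram formulation, and the compactness step in which quantifier elimination transfers $\exists \bar{y}\,\theta_B(\bar{a},\bar{y})$ from $\m{B}'$ through $\m{A}$ to $\m{C}'$ are all sound, and together they constitute the standard proof that quantifier elimination yields substructure-completeness and hence amalgamation. The paper gives no proof of its own here---it cites Hodges, Theorem 8.4.1---and your argument is essentially the one behind that citation.
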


In the case of varieties of semilinear residuated lattices with the \prp{CEP}, Theorem~\ref{t:QE-AP} is usually used to show that $\chain{\V}$ has the \prp{AP} by showing quantifier elimination for a suitable elementary class $\K \subseteq \chain{\V}$ with $\iso\sub(\K) = \chain{\V}$. Then, by Corollary~\ref{c:ess-AP-sem}, it follows that $\V$ has the \prp{AP}.

Finally, we discuss some generalities about residually small and finitely generated congruence-distributive varieties, e.g., finitely generated varieties of residuated lattices.  

A variety $\V$ is called \emph{residually small} if it has up to isomorphism only a set (as opposed to a proper class) of subdirectly irreducible members. This is, in particular, the case for finitely generated congruence-distributive varieties. 

\begin{theorem}[\cite{Kearnes1989}]\label{t:AP-CEP}
Let $\V$ be a residually small congruence-modular variety with the amalgamation property. Then $\V$ has the congruence extension property.
\end{theorem}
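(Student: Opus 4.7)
Plan: The argument I would give proceeds by contradiction, using the \prp{AP} to inflate a failure of the \prp{CEP} into a family of congruences cogenerating subdirectly irreducible algebras of unbounded cardinality, contradicting residual smallness. Suppose that $\V$ is congruence-modular, residually small, has the \prp{AP}, but fails the \prp{CEP}, witnessed by an algebra $\m{A}\in\V$, a subalgebra $\m{B}\leq\m{A}$, and $\Theta\in\Con{\m{B}}$. Letting $\Phi\in\Con{\m{A}}$ be the congruence on $\m{A}$ generated by $\Theta$, one has $\Theta\subsetneq\Phi\cap B^2$; passing to the natural quotients of $\m{A}$ and $\m{B}$ by $\Theta$, I would reduce to the situation where $\Theta=\Delta_B$ while $\Phi\cap B^2\neq\Delta_B$.

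The first key step is to use congruence modularity to extract an abelian congruence. Because the generators of $\Phi$ (once the reduction is carried out) lie in the diagonal of $\m{B}$, the Day--Gumm commutator machinery available in any CM variety forces $\Phi\cap B^2$ to be an abelian congruence on $\m{B}$; concretely, a difference term witnesses that the interval $[\Delta_B,\,\Phi\cap B^2]$ in $\Con{\m{B}}$ is abelian. This is the channel through which the CM hypothesis enters the argument, converting the failure of the \prp{CEP} into an honest abelian extension of a subalgebra.

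The second step iterates the \prp{AP}. For each cardinal $\kappa$, repeated amalgamation over the common subalgebra $\m{B}$ produces an algebra $\m{A}_\kappa\in\V$ into which $\m{B}$ embeds and which admits $\kappa$-many embeddings $\iota_\alpha\colon\m{A}\to\m{A}_\kappa$, all agreeing on $\m{B}$. Each $\iota_\alpha$ pushes $\Phi$ forward to a congruence $\Phi_\alpha\in\Con{\m{A}_\kappa}$, and all the $\Phi_\alpha$ restrict to the same nontrivial abelian congruence on the common image of $\m{B}$. Leveraging the abelian structure from the first step together with the universal property of the iterated amalgam, I would argue that the family $\{\Phi_\alpha:\alpha<\kappa\}$ is sufficiently independent to cogenerate a subdirectly irreducible quotient of $\m{A}_\kappa$ of cardinality at least $\kappa$; choosing $\kappa$ above the residual bound of $\V$ then contradicts residual smallness.

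The principal obstacle is the second step: verifying that pushouts arising from iterated \prp{AP} amalgams respect the commutator structure well enough for the family $\{\Phi_\alpha\}$ to be genuinely independent, and that their appropriate combination produces a subdirectly irreducible quotient whose cardinality really scales with $\kappa$. This interaction between amalgamation and the affine (abelian) behavior detected by the commutator is the technical heart of the argument; everything else is a packaging of standard CM commutator calculus and the Jónsson-style independence-to-\emph{SI} construction around it.
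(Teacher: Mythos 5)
A preliminary remark: the paper does not prove this statement---it is imported from \cite{Kearnes1989}---so there is no in-text argument to compare yours against, and I can only assess your sketch on its own terms. It contains a step that is false as stated. You claim that congruence modularity alone (``the Day--Gumm commutator machinery available in any CM variety'') forces $\Phi\cap B^2$ to be abelian over $\Theta$. This fails even in congruence-modular varieties \emph{with} the \prp{AP}: take the variety of groups (which has the \prp{AP} by Schreier's theorem cited in the introduction of this paper), let $G$ be a nonabelian simple group, let $\m{A}=(G\times G)\rtimes C_2$ with $C_2$ acting by swapping coordinates, let $\m{B}=G\times G$, and let $\Theta$ be the congruence of $\m{B}$ corresponding to the normal subgroup $G\times 1$. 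The congruence $\Phi$ of $\m{A}$ generated by $\Theta$ corresponds to the normal closure of $G\times 1$ in $\m{A}$, which is all of $G\times G$; hence $\Phi\cap B^2=\nabla_B$, and since $G\times G$ is perfect, $[\Phi\cap B^2,\Phi\cap B^2]=\nabla_B\not\leq\Theta$---the gap between $\Theta$ and its extension--restriction is as far from abelian as possible. (Groups are congruence-modular, have the \prp{AP}, fail the \prp{CEP}, and are not residually small, so this is consistent with the theorem; it just shows that your first pillar cannot rest on congruence modularity, nor even on modularity plus amalgamation.) The abelianness of this gap is precisely where residual smallness must enter, via the Freese--McKenzie fact that residually small congruence-modular varieties satisfy the commutator identity $[\alpha\wedge\beta,\beta]=\alpha\wedge[\beta,\beta]$; your plan instead reserves residual smallness for a cardinality contradiction at the very end. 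Separately, your reduction to ``$\Theta=\Delta_B$ while $\Phi\cap B^2\neq\Delta_B$'' is incoherent: the induced map $\m{B}/\Theta\to\m{A}/\Phi$ has kernel $(\Phi\cap B^2)/\Theta$, which is nontrivial exactly because the \prp{CEP} fails, so after quotienting you no longer have a subalgebra; and a congruence of $\m{A}$ generated by diagonal pairs is $\Delta_A$, so in the ``reduced'' situation there is no witness left.

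The second half of your plan is the part you yourself flag as the technical heart, and it is left entirely unverified. Iterated amalgamation does produce, for each $\kappa$, an algebra containing $\kappa$ copies of $\m{A}$ that agree on $\m{B}$, but the passage from ``$\kappa$ congruences $\Phi_\alpha$ all restricting to the same nontrivial congruence on the common copy of $\m{B}$'' to ``a subdirectly irreducible quotient of cardinality at least $\kappa$'' is not a routine J\'{o}nsson-style packaging: you give no mechanism for why the $\Phi_\alpha$ should be independent, nor why some completely meet-irreducible congruence separating a fixed pair must have a quotient whose size scales with $\kappa$---and the abelianness input this step was supposed to lean on is, per the above, unavailable. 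The proof in the cited paper runs through commutator theory (using the Freese--McKenzie identity above to pin the extended congruence back down to $\Theta$), not through a residual-bound blow-up. As it stands, your proposal neither reconstructs that argument nor constitutes an independent proof.
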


The above theorem implies, in particular, that any finitely generated congruence-distributive variety without the \prp{CEP} does not have the \prp{AP}. In conjunction with Theorem~\ref{t:APmain}, this fact was used in \cite[Section~5]{Fussner2024} to give an effective algorithm to decide whether a finitely generated, congruence-distributive variety whose class of finitely subdirectly irreducible members is closed under subalgebras has the \prp{AP}. The following result is an immediate consequence of this.

\begin{proposition}[{cf. \cite[Theorem~5.1]{Fussner2024}}]
Let $\V\in\slat(\SemRL)$ be finitely generated. Then it is decidable whether $\V$ has the $\prp{AP}$.
\end{proposition}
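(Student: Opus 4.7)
The plan is to reduce the statement to the effective decision procedure given in \cite[Theorem~5.1]{Fussner2024}, which applies to any finitely generated congruence-distributive variety whose class of finitely subdirectly irreducible members is closed under subalgebras. Accordingly, the proof amounts to verifying these two structural hypotheses for our $\V$.

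The congruence distributivity hypothesis is immediate: every residuated lattice has a lattice reduct, so every subvariety of $\SemRL$ inherits congruence distributivity. Hence $\V$ is a finitely generated congruence-distributive variety. For closure of $\Vfsi$ under subalgebras, I would appeal to the observation recalled in Section~\ref{sec:background}: for every $\V \in \slat(\SemRL)$, one has $\Vfsi = \chain{\V}$, and any subalgebra of a totally ordered residuated lattice is again totally ordered. With both hypotheses in place, \cite[Theorem~5.1]{Fussner2024} applies and yields the decision procedure.

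For the reader's orientation, the algorithm itself operates in two stages. First, by Theorem~\ref{t:AP-CEP}, $\V$ can have the $\prp{AP}$ only if it has the $\prp{CEP}$. In a finitely generated congruence-distributive variety, J\'onsson's lemma bounds the subdirectly irreducible quotients of $\V$ in both number and cardinality, so the $\prp{CEP}$ can be checked mechanically on a finite list of candidates. Second, if the $\prp{CEP}$ holds, Theorem~\ref{t:APmain} reduces the $\prp{AP}$ to a finite search for amalgams of spans of finitely generated algebras in $\Vfsi$, which are themselves of bounded cardinality in this setting. The only substantive obstacle lies in this second stage---controlling the search space for amalgams---and this is precisely what \cite[Theorem~5.1]{Fussner2024} addresses; for the present statement, no new technical input is required beyond verifying the hypotheses just checked.
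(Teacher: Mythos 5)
Your proposal is correct and follows exactly the route the paper intends: the paper presents this proposition as an immediate consequence of the decision procedure of \cite[Section~5]{Fussner2024}, and your verification of its two hypotheses (congruence distributivity via the lattice reduct, and closure of $\Vfsi=\chain{\V}$ under subalgebras since subalgebras of chains are chains) is precisely what is needed. Your sketch of the two-stage algorithm also matches the paper's description of how Theorem~\ref{t:AP-CEP} and Theorem~\ref{t:APmain} combine with J\'onsson's lemma to make both checks finite.
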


Note that since every commutative residuated lattice has the \prp{CEP}, Theorem~\ref{t:AP-CEP} is only useful for varieties that contain a non-commutative residuated lattice.

We say that an algebra $\m{A}$ is \emph{strictly simple} if it is simple and it does not have any non-trivial subalgebras.

\begin{proposition}[{cf. \cite{JipsenRose1989}}]\label{p:strictly-simple}
Let $\V$ be a congruence-distributive variety that is generated by a finite strictly simple algebra $\m{A}$. Then $\V$ has the amalgamation property.
\end{proposition}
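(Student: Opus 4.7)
The plan is to verify the hypotheses of Proposition~\ref{p:condition} for $\V$ and the class $\K := \iso\sub(\{\m{A}\})$. By strict simplicity, $\K$ consists up to isomorphism of $\m{A}$ together with the trivial algebra (in case the latter is a subalgebra of $\m{A}$), and condition (a) of that proposition holds by construction.

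For condition (b), I would invoke J\'onsson's lemma: since $\V$ is congruence-distributive and generated by $\m{A}$, $\Vsi \subseteq \hspu(\{\m{A}\})$. Finiteness of $\m{A}$ implies that every ultrapower of $\m{A}$ is isomorphic to $\m{A}$, so $\sub\pu(\{\m{A}\}) \subseteq \K$ (up to isomorphism); and simplicity of $\m{A}$ ensures that every homomorphic image of a member of $\K$ is again either isomorphic to $\m{A}$ or trivial. Since trivial algebras are not subdirectly irreducible, this yields $\Vsi \subseteq \iso(\{\m{A}\}) \subseteq \K$.

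The crux of the proof is condition (c). Given $\m{A}_0 \leq \m{B} \in \V$ and $\Theta \in \Con{\m{A}_0}$ with $\m{A}_0/\Theta \in \K$, I first dispatch the subcase where $\m{A}_0/\Theta$ is trivial by taking $\Phi := B^2$. Otherwise $\m{A}_0/\Theta \cong \m{A}$, and since $\m{A}$ is subdirectly irreducible, $\Theta$ is completely meet-irreducible in $\Con{\m{A}_0}$. By Zorn's lemma one picks $\Phi \in \Con{\m{B}}$ maximal with $\Phi \cap A_0^2 = \Theta$. If $\Phi = \bigwedge_{i \in I} \Phi_i$ with each $\Phi_i \supsetneq \Phi$, then maximality forces $\Phi_i \cap A_0^2 \supsetneq \Theta$ for every $i$, while intersection with $A_0^2$ commutes with arbitrary meets of congruences, yielding $\Theta = \bigwedge_{i \in I}(\Phi_i \cap A_0^2)$ and contradicting the meet-irreducibility of $\Theta$. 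Hence $\Phi$ is completely meet-irreducible in $\Con{\m{B}}$, so $\m{B}/\Phi$ is subdirectly irreducible and lies in $\Vsi \subseteq \K$.

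Finally, condition (d) reduces to a short case analysis on spans whose vertices lie in $\K$: any such span can be amalgamated inside $\m{A}$ itself, or inside $\m{A} \times \m{A}$ in the one remaining case where both outer legs are $\m{A}$ while the apex is trivial, using the trivial subalgebra of $\m{A}$ to supply the constants that define the two amalgamating homomorphisms into $\m{A} \times \m{A}$. I expect condition (c) to be the main obstacle, with the decisive observation being that a maximal congruence extension of a completely meet-irreducible congruence is itself completely meet-irreducible.
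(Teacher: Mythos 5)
Your route---verifying the four conditions of Proposition~\ref{p:condition} for $\K=\iso\sub(\{\m{A}\})$---is genuinely different from the paper's proof, which instead derives the \prp{CEP} for $\V$ from the \prp{CEP} for $\Vfsi=\hm\sub(\m{A})$ (using congruence distributivity) and then applies Proposition~\ref{p:ess-AP}, noting that $\Vfsi$ trivially has the essential \prp{AP}. Your verifications of (a), (b), and (d) are correct; in particular, the $\m{A}\times\m{A}$ amalgam for the span with trivial apex and two copies of $\m{A}$ as legs is exactly what is needed there (the paper's essential-span machinery sidesteps that case, since such a span is not essential).

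The gap is in condition (c), and it sits precisely where you announce ``the decisive observation.'' Your appeal to Zorn's lemma presupposes that the set $S=\{\Phi\in\Con{\m{B}}:\Phi\cap A_0^2=\Theta\}$ is nonempty, i.e., that $\Theta$ extends to \emph{some} congruence of $\m{B}$. When $\Theta=\Delta_{A_0}$ this is clear ($\Delta_B\in S$), but otherwise it is exactly an instance of the congruence extension property: $S\neq\emptyset$ if and only if the congruence of $\m{B}$ generated by $\Theta$ restricts back to $\Theta$ on $A_0^2$, and since $\Theta$ is maximal, a failure here leaves $S$ empty. This is neither a hypothesis of the proposition nor obvious; by Theorem~\ref{t:AP-CEP} the \prp{CEP} is a necessary consequence of the conclusion, so it is one of the two substantive inputs and cannot come for free. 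The maximality-implies-complete-meet-irreducibility step you supply is the easy half. The missing half can be filled either as the paper does (for congruence-distributive varieties the \prp{CEP} for $\V$ reduces to the \prp{CEP} for $\Vfsi=\hm\sub(\m{A})$, which holds by strict simplicity), or directly by a J\'{o}nsson-style ultrafilter argument: writing $\m{B}$ as a subdirect product of copies of $\m{A}$, any $\Theta$ with $\m{A}_0/\Theta\cong\m{A}$ must contain---and hence, by maximality, equal---the restriction to $A_0^2$ of the congruence induced by some ultrafilter on the index set, and that congruence restricts to a congruence of $\m{B}$ with quotient embeddable in $\m{A}$, as required.
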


\begin{proof}
First, note that $\hm\sub(\m{A}) = \hm(\m{A})$ consists, up to isomorphism, of $\m{A}$ and a trivial algebra. So, by J\'{o}nsson's Lemma, $\Vfsi = \hm\sub(\m{A})$. Moreover, since $\V$ is congruence-distributive, by \cite[Corollary 2.4]{Fussner2024}, $\V$ has the \prp{CEP} if and only if $\Vfsi$ has the \prp{CEP}, which is clearly the case. Hence $\V$ has the \prp{CEP} and, by Proposition~\ref{p:ess-AP}, it has the \prp{AP} if and only if $\Vfsi$ has the essential \prp{AP}, which it trivially has.
\end{proof}

In particular, it follows from Proposition~\ref{p:strictly-simple} that any variety generated by a finite strictly simple residuated lattice has the \prp{AP}. Hence, this illustrates an easy way to obtain examples of varieties of residuated lattices with the \prp{AP}.  

\begin{figure}
\centering
\begin{tikzpicture}[
place/.style={circle,draw=black,fill=black, minimum size = 4pt, inner sep = 0pt},
square/.style={regular polygon,regular polygon sides=4},
place2/.style={square,draw=black,fill=black, minimum size = 5.5pt, inner sep = 0pt},
place3/.style={square,draw=black, minimum size = 5.5pt, inner sep = 0pt},
place4/.style={circle,draw=black, minimum size = 4pt, inner sep = 0pt}]

   \node[place2] (a) at (0,-1.6) {};
   \node[place] (1) at (0,-2.4) {};
  \node[place2] (b) at (0,-3.2) {};
  \node[place] (bot) at (0,-4) {};
   
  \node[left] () at (a) {$ab = a$};
   \node[left] () at (1) {$\ut$};
      \node[left] () at (b) {$ba = b$};
  \node[left] () at (bot) {$\bot$};
  
  \draw (a) -- (1) -- (b) -- (bot);
\end{tikzpicture}
\caption{Labeled Hasse diagrams of a strictly simple totally ordered residuated lattice.}
\label{fig:strictsimp}
\end{figure}
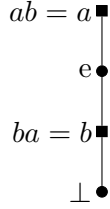

\begin{example}[\cite{GJM2020}]\label{ex:strictly-simple} 
The algebra depicted in Figure~\ref{fig:strictsimp} is a strictly simple totally ordered residuated lattice. Hence, by Proposition~\ref{p:strictly-simple}, the variety it generates has the \prp{AP}.
\end{example}

Along similar lines, we may obtain the following abstract characterization of when a variety generated by a single finite simple totally ordered residuated lattice has the \prp{AP}. 

\begin{proposition}\label{prop:simple chain}
Let $\m{A}$ be a finite simple totally ordered residuated lattice and $\V = \vr(\m{A})$. Then $\V$ has the amalgamation property if and only if  $\m{A}$ has the \prp{CEP} and does not contain two distinct isomorphic subalgebras. The same holds for bounded and/or involutive expansions.
\end{proposition}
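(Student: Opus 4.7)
The plan is to apply Corollary~\ref{c:ess-AP-sem} to reduce the amalgamation property of $\V$ to amalgamating essential spans of finitely generated algebras in $\chain{\V}$, and then to exploit the rigid structure forced by the hypotheses on $\m{A}$ to either construct the amalgam explicitly (as $\m{A}$ itself) or derive a contradiction from the existence of a putative amalgam. The pivotal preliminary observation is that if $\m{A}$ is simple and has the \prp{CEP}, then every subalgebra $\m{B} \leq \m{A}$ is also simple: a non-trivial congruence of $\m{B}$ extends to a non-trivial congruence of the simple $\m{A}$, which must be the full relation, forcing the original congruence to be full. Combined with J\'onsson's Lemma (which applies since $\V$ is finitely generated and congruence-distributive), this yields $\Vsi \subseteq \iso\sub(\m{A})$ up to a trivial algebra, and $\V$ is locally finite (its free algebras embed into a finite power of $\m{A}$). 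The same extension argument shows that every subalgebra of $\m{A}$ inherits the \prp{CEP}, so $\Vfsi$ has the \prp{CEP} and, by Corollary~2.4 of \cite{Fussner2024}, $\V$ has the \prp{CEP}. Two further elementary observations will be used repeatedly: the automorphism group of any finite chain in $\chain{\V}$ is trivial (order-preserving bijections on a finite chain are the identity), and any embedding into a simple non-trivial target is essential.

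For the sufficiency direction, suppose $\m{A}$ has the \prp{CEP} and that any two isomorphic subalgebras of $\m{A}$ coincide. Given an essential span $\langle \varphi_1 \colon \m{C} \to \m{B}_1, \varphi_2 \colon \m{C} \to \m{B}_2\rangle$ of finitely generated algebras in $\chain{\V}$, local finiteness lets me identify $\m{C}$, $\m{B}_1$, and $\m{B}_2$ with members of $\sub(\m{A})$. Each $\varphi_i(\m{C})$ is a subalgebra of $\m{A}$ isomorphic to $\m{C}$, so by hypothesis $\varphi_i(\m{C}) = \m{C}$, and the triviality of $\mathrm{Aut}(\m{C})$ then forces $\varphi_i$ to coincide with the inclusion $\m{C} \hookrightarrow \m{B}_i$. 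The pair of inclusions $\m{B}_i \hookrightarrow \m{A}$ is therefore an amalgam in $\chain{\V}$, and Corollary~\ref{c:ess-AP-sem} delivers the \prp{AP} for $\V$.

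For the necessity direction, suppose $\V$ has the \prp{AP}. Since $\V$ is finitely generated (hence residually small) and congruence-distributive, Theorem~\ref{t:AP-CEP} yields the \prp{CEP} for $\V$ and hence for $\m{A}$. Assume for contradiction that $\m{B}_1 \neq \m{B}_2$ are isomorphic subalgebras of $\m{A}$ via $f \colon \m{B}_1 \to \m{B}_2$, and consider the span $\langle \iota_1 \colon \m{B}_1 \hookrightarrow \m{A},\, \iota_2 \circ f \colon \m{B}_1 \to \m{A}\rangle$. Taking an amalgam $\langle \psi_1, \psi_2 \colon \m{A} \to \m{D}\rangle$ in $\V$ and decomposing $\m{D}$ as a subdirect product of SI factors $\m{D}_i \in \iso\sub(\m{A})$, each projection $\pi_i \circ \psi_k \colon \m{A} \to \m{D}_i$ has kernel $\Delta_A$ or $\nabla_A$ by simplicity of $\m{A}$; in the former case, a cardinality argument forces $\m{D}_i \cong \m{A}$ and, via the unique such isomorphism, $\pi_i \circ \psi_k$ becomes the identity on $\m{A}$, while in the latter case $\pi_i \circ \psi_k$ is the constant map to $\ut$. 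Setting $I_k := \{i : \pi_i \circ \psi_k \text{ is injective}\}$, both sets are non-empty by injectivity of $\psi_k$. Picking $i \in I_1$ and projecting the amalgamation equation through $\pi_i$ yields either $\iota_1 = \iota_2 \circ f$ when $i \in I_2$ (forcing $\m{B}_1 = \m{B}_2$), or $\iota_1$ equal to the constant map to $\ut$ when $i \notin I_2$ (forcing $\m{B}_1 = \{\ut\} = \m{B}_2$); either case contradicts $\m{B}_1 \neq \m{B}_2$.

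The principal difficulty is organizing the interaction between the subdirect decomposition of the putative amalgam $\m{D}$ and the simplicity and finiteness constraints on $\m{A}$; once the preliminary observation that subalgebras of $\m{A}$ are simple is in place, this bookkeeping is straightforward. The extensions to bounded and involutive expansions require no modification, since J\'onsson's Lemma, Theorem~\ref{t:AP-CEP}, Corollary~\ref{c:ess-AP-sem}, and Corollary~2.4 of \cite{Fussner2024} apply uniformly across these signatures, and embeddings automatically preserve the additional designated constants.
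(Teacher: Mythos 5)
Your proof is correct and follows essentially the same route as the paper: Theorem~\ref{t:AP-CEP} for the \prp{CEP}, J\'onsson's Lemma together with simplicity to identify $\Vfsi$ with $\iso\sub(\m{A})$, rigidity of finite chains (trivial automorphism groups) to force the amalgamating maps to be inclusions, and Corollary~\ref{c:ess-AP-sem} to pass between $\V$ and its totally ordered members. The only cosmetic difference is that in the necessity direction you re-derive by hand, via the subdirect decomposition of the amalgam $\m{D}$ and the projection analysis, the reduction to totally ordered amalgams that the paper obtains by observing that spans into the simple algebra $\m{A}$ are essential and then citing Corollary~\ref{c:ess-AP-sem} directly.
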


\begin{proof}
For the left-to-right direction, first note that, by Theorem~\ref{t:AP-CEP}, $\V$ has the \prp{CEP}.  Thus, by J\'{o}nsson's Lemma,  $\Vfsi = \hm\sub(\m{A}) = \sub\hm(\m{A}) = \iso\sub(\m{A})$.
Now note that, since $\m{A}$ is simple, each span $\langle \varphi_1 \colon \m{B} \to \m{A}, \varphi_2 \colon \m{B} \to \m{A} \rangle$ is essential. By Corollary~\ref{c:ess-AP-sem}, the span $\langle \varphi_1,\varphi_2 \rangle$ has an amalgam in $\Vfsi = \iso\sub(\m{A})$. But every amalgam of this span in $\iso\sub(\m{A})$ is of the form $\langle \psi_1 \colon \m{A} \to \m{A}, \psi_2 \colon \m{A} \to \m{A} \rangle$ for cardinality reasons and, since $\m{A}$ is totally ordered, $\psi_1 = \psi_2 = \mathsf{id}_A$. Hence $\varphi_1[B] = \varphi_2[B]$, i.e., $\m{A}$ does not contain two distinct isomorphic subalgebras. 

For the right-to-left direction, note first that, since $\m{A}$ has the \prp{CEP} and is simple, also every subalgebra is simple and has the \prp{CEP}. In particular, $\hm\sub(\m{A}) = \iso\sub(\m{A}) = \Vfsi$ has the \prp{CEP}, so  \cite[Corollary 2.4]{Fussner2024} yields that $\V$ has the \prp{CEP}, since residuated lattices are congruence-distributive. Now, for every span in $\Vfsi$ we can just take the inclusion embeddings into $\m{A}$ as an amalgam. The assumption that $\m{A}$ does not contain two distinct isomorphic subalgebras ensures that it is actually an amalgam. Hence, by Corollary~\ref{c:ess-AP-sem}, $\V$ has the \prp{AP}.
\end{proof}

Thus, to check if a variety generated by a finite simple totally ordered residuated lattice has the \prp{AP}, it is enough to calculate its subalgebras.


\section{Idempotent Varieties}\label{sec:idempotent}

In the remainder of the paper, we will discuss what is known about amalgamation in notable varieties of semilinear residuated lattices. We begin with varieties of \emph{idempotent} residuated lattices, i.e., those satisfying the equation $x^2 \approx x$. Idempotent varieties include some of the best-known classes of residuated lattices, such as Boolean algebras, Heyting algebras, and Sugihara monoids. They have also been studied quite thoroughly in recent years, yielding rather powerful structure theorems in several cases; see, e.g., \cite{FG1,FG2,GJM2020,JTV2021}.

For a variety of residuated lattices $\V$, we will denote the subvariety of its idempotent members by $1\V$; for example, $\nSRL{1}$ denotes the variety of idempotent semilinear residuated lattices. Importantly, idempotent semilinear residuated lattices have the \prp{CEP}.

\begin{proposition}[{\cite[Corollary 4.4]{FG1}}]
The variety of idempotent semilinear residuated lattices has the congruence extension property.
\end{proposition}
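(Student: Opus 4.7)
The plan is to reduce the problem to the chain case and then exploit the structure theory of idempotent totally ordered residuated lattices. Since residuated lattices are congruence-distributive, \cite[Corollary 2.4]{Fussner2024} (as invoked in the proof of Proposition~\ref{p:strictly-simple}) says that $\nSRL{1}$ has the \prp{CEP} if and only if $(\nSRL{1})_{\mathrm{FSI}}$ does; and for a variety in $\slat(\SemRL)$, this class of finitely subdirectly irreducibles coincides with the totally ordered members $\chain{\nSRL{1}}$, as noted in Section~\ref{sec:background}. So the first step is this reduction to chains.

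Next, I would fix a totally ordered idempotent residuated lattice $\m{A}$, a subalgebra $\m{B}\leq\m{A}$, and a convex normal subalgebra $H$ of $\m{B}$ (equivalently, a congruence of $\m{B}$). Letting $K$ be the convex normal subalgebra of $\m{A}$ generated by $H$, we always have $H\subseteq K\cap B$, so the work lies in showing $K\cap B\subseteq H$. For this, I would lean on the structure theory of idempotent totally ordered residuated lattices developed in \cite{FG1,GJM2020,JTV2021}: every element is $\cdot$-idempotent, the positive cone $\{a\in A:a\geq\ut\}$ and negative cone $\{a\in A:a\leq\ut\}$ are each closed under multiplication, and multiplication on each cone is tightly controlled by the lattice order (acting essentially as join on the positive cone and as meet on the negative, modulo the interaction between central and non-central elements). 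From this one derives a description of convex normal subalgebras in such a chain as intervals around $\ut$ closed under the residuals and under conjugation, together with explicit control on how such subalgebras are generated.

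With this in hand, the concluding step is to argue that $K$ sits inside the interval in $\m{A}$ determined by the positive and negative extrema of $H$ (together with suitable conjugates), and that these conjugates never drag new elements of $B$ into $K$. Then for any $b\in K\cap B$ the witnessing inequalities pull back to $\m{B}$, and convexity of $H$ in $\m{B}$ forces $b\in H$. The principal obstacle is the structural step in the middle paragraph: giving a clean description of convex normal subalgebras in a general (possibly non-commutative) idempotent chain and verifying that closure under conjugation by elements of $\m{A}$ does not enlarge $K$ beyond what is already dictated by $H$ inside $\m{B}$. This is precisely where idempotence does the heavy lifting, collapsing multiplication essentially to the lattice operations on each cone and thereby tightly constraining the behavior of conjugation.
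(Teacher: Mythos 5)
The paper itself gives no proof of this proposition---it is imported verbatim from \cite[Corollary 4.4]{FG1}---so there is no in-text argument to compare against; I assess your proposal on its own terms. Your reduction to chains is sound: residuated lattices are congruence-distributive, $(\nSRL{1})_{\mathrm{FSI}}=\chain{\nSRL{1}}$ is closed under subalgebras, and the transfer theorem invoked in the proof of Proposition~\ref{p:strictly-simple} does reduce the \prp{CEP} for the variety to the \prp{CEP} for its totally ordered members. The easy half of your structural description is also correct: order-preservation of $\cdot$ together with idempotence gives $ab=a\join b$ when $\ut\leq a,b$ and $ab=a\meet b$ when $a,b\leq\ut$ in any chain.

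The gap is that everything after this is asserted rather than proved, and it is exactly where the content of the proposition lives. The cone computations say nothing about the two things that actually determine the convex normal subalgebra $K$ of $\m{A}$ generated by $H$: mixed products of a positive and a negative element, and closure under the residual/conjugation terms $a\under x$, $x\ovr a$, $a\under\ut$, $\ut\ovr a$ that appear in the description of convex \emph{normal} subalgebras. Example~\ref{ex:CEPfail} shows precisely this failure mode in a non-idempotent chain: the convex normal subalgebra of $\m{A}$ generated by $a$ swallows $b$ because $ac\neq ca$ drags elements downward, while $\{\ut,a\}$ is already convex normal in the subalgebra $\m{B}$. Your proof must explain why idempotence forbids this, i.e., why for $h\in H$ and $a\in A$ the relevant conjugates of $h$ never lie strictly below every element of $B$ that is above the ``bottom'' of $H$, and you explicitly defer that verification (``this is precisely where idempotence does the heavy lifting''). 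A further small inaccuracy: $H$ need not have positive and negative extrema, so ``the interval determined by the extrema of $H$'' has to be replaced by a directed union over finite subsets, and the claim that conservativity holds for arbitrary (non-commutative) idempotent chains is itself a nontrivial theorem of \cite{GJM2020} that you would need to cite or prove before using it. As it stands, the proposal is a plausible roadmap---very likely the same roadmap followed in \cite{FG1}---but not a proof.
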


It was shown in \cite[Theorem~6.6]{GJM2020} that the variety of commutative idempotent semilinear residuated lattices has the \prp{AP}. However, \cite[Theorem~5.2]{FG2} shows that commutativity is crucial for this result; the following example shows that $\nSRL{1}$ does not have the \prp{AP}.

\begin{example}[see {\cite[Theorem~5.2]{FG2}}]\label{ex:IdSRLnoAP}
Consider the totally ordered idempotent residuated lattice $\m{B}$ and $\m{C}$  in Figure~\ref{fig:1SemRL}. One may verify that the span $\tuple{\{\ut\} \hookrightarrow \m{B}, \{\ut\} \hookrightarrow \m{C}}$ does not have a one-sided amalgam among totally ordered idempotent residuated lattices, and hence, by Theorem~\ref{t:APmain}, no amalgam in the variety $\nSRL{1}$.
\end{example}

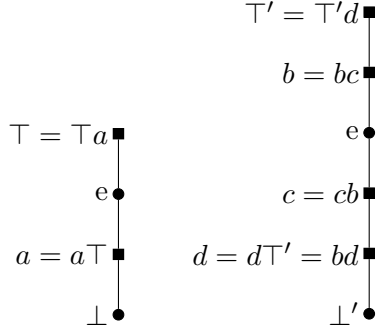
\begin{figure}
\centering
\begin{tikzpicture}[
place/.style={circle,draw=black,fill=black, minimum size = 4pt, inner sep = 0pt},
square/.style={regular polygon,regular polygon sides=4},
place2/.style={square,draw=black,fill=black, minimum size = 5.5pt, inner sep = 0pt}]
   \node[place2] (top) at (0,-1.6) {};
   \node[place] (1) at (0,-2.4) {};
  \node[place2] (a) at (0,-3.2) {};
  \node[place] (bot) at (0,-4) {};
   
  \node[left] () at (top) {$\top = \top a$};
   \node[left] () at (1) {$\ut$};
      \node[left] () at (a) {$a = a\top$};
  \node[left] () at (bot) {$\bot$};

  \draw (top) -- (1) -- (a) -- (bot);
\end{tikzpicture}
\hspace{0.2 in}
\begin{tikzpicture}[
place/.style={circle,draw=black,fill=black, minimum size = 4pt, inner sep = 0pt},
square/.style={regular polygon,regular polygon sides=4},
place2/.style={square,draw=black,fill=black, minimum size = 5.5pt, inner sep = 0pt}]
   \node[place2] (top) at (0,0) {};
   \node[place2] (b) at (0,-0.8) {};
     \node[place] (1) at (-0,-1.6) {};
        \node[place2] (c) at (-0,-2.4) {};
  \node[place2] (d) at (0,-3.2) {};
  \node[place] (bot) at (0, -4) {};
   
  \node[left] () at (top) {$\top' = \top' d$};
   \node[left] () at (b) {$b = bc$};
   \node[left] () at (1) {$\ut$};
   \node[left] () at (c) {$c = cb$};
  \node[left] () at (d) {$d = d\top' = bd$};
    \node[left] () at (bot) {$\bot'$};

  \draw (top) -- (b) -- (1) -- (c) -- (d) -- (bot);
\end{tikzpicture}
\caption{Labeled Hasse diagrams for the totally ordered idempotent residuated lattices  $\m{B}$ (left), and $\m{C}$ (right) that are considered in Example~\ref{ex:IdSRLnoAP}.}
\label{fig:1SemRL}
\end{figure}

We will discuss amalgamation in general subvarieties of $\nSRL{1}$ in Section~\ref{sec:general}. For now, we pause for an interlude in Sections~\ref{sec:godel} and \ref{sec:sugihara} to discuss amalgamation in some well-known, classical cases: Boolean algebras, semilinear Heyting algebras (aka G\"{o}del algebras), and Sugihara monoids.

\subsection{G\"{o}del Algebras and Relative Stone Algebras}\label{sec:godel}

Maksimova's characterization of varieties of Heyting algebras with the amalgamation property \cite{Mak77} is probably the best-known result on amalgamation for residuated lattices. Semilinear Heyting algebras---usually called \emph{G\"{o}del algebras}---figure prominently into this work. Formally, these may be defined as bounded semilinear residuated lattices satisfying $xy\approx x\meet y$. Of course, one consequence of this equation is that $\cdot$ is duplicative and needn't be included explicitly in the signature. Evidently, G\"{o}del algebras are idempotent residuated lattices and further satisfy $x\leq\ut$. We will denote the variety of G\"{o}del algebras by $\GA$.

Like other Heyting algebras, G\"{o}del algebras are bounded. If we drop this requirement, we obtain semilinear algebras in the basic signature of residuated lattices that satisfy $xy\approx x\meet y$. These are called \emph{relative Stone algebras}, and have also been studied quite thoroughly by specialists working on Heyting algebras. We denote the variety of relative Stone algebras by $\RSA$.

There are a few basic examples that are instrumental to the theory of G\"{o}odel algebras and relative Stone algebras. For an integer $m\geq 0$, we define the $m$-element totally ordered relative Stone algebra
\[
\m{R}_{m} = \alg{\{-m+1,\dots, 0\}, \meet,\join, \to, 0}
\]
and the $m$-element totally ordered Gödel algebra 
\[
\m{G}_{m} = \alg{\{-m+1,\dots, 0\}, \meet,\join,\to, 0,-m+1},
\]
where in both cases $\ut = 0$ and
\[
x \to y = \begin{cases}
\ut & \text{if } x \leq y \\
y & \text{otherwise}.
\end{cases}
\]
In fact, it is not hard to see that every totally ordered relative Stone algebra has the form $\alg{C,\meet,\join,  \to \ut}$, where $\alg{C,\meet,\join}$ is a chain with top element $\ut$ and $\to$ is defined as above. Similarly, every totally ordered Gödel algebra is of the form $\alg{C,\meet,\join, \to \ut, \zr}$, where $\alg{C,\meet,\join}$ is a chain with top element $\ut$ and bottom element $\zr$. 

It follows from the previous observation that embeddings between totally ordered relative Stone algebras (Gödel algebras) are exactly order-preserving injections that preserve the top (and bottom). This has drastic consequences for amalgamation in these classes: Amalgamating a span of totally ordered relative Stone algebras or Gödel algebras essentially amounts to amalgamating totally ordered sets, taking care to attend to any designated bounds. From this, it isn't hard to see that $\chain{\RSA}$ and $\chain{\GA}$ have the \prp{AP}. From Corollary~\ref{c:ess-AP-comsem}, it follows that the varieties $\RSA$ and $\GA$ both have the \prp{AP}. 
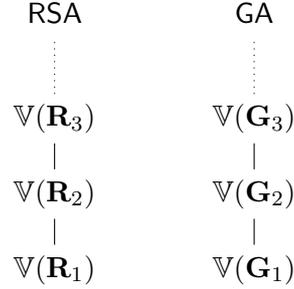
\begin{figure}
\centering
\begin{tikzpicture}
   \node (t) at (0,1) {$\RSA$};
   \node (d) at (0,0.8) {};
   \node (3) at (0,-0.4)  {$\vr(\m{R}_3)$};
  \node (2) at (-0,-1.4)  {$\vr(\m{R}_2)$};
  \node (1) at (0,-2.4) {$\vr(\m{R}_1)$};
  \draw (1) -- (2) -- (3);
  \draw[dotted] (3) -- (d);
\end{tikzpicture}
\quad\quad\quad
\begin{tikzpicture}
   \node (t) at (0,1) {$\GA$};
   \node (d) at (0,0.8) {};
   \node (3) at (0,-0.4)  {$\vr(\m{G}_3)$};
  \node (2) at (-0,-1.4)  {$\vr(\m{G}_2)$};
  \node (1) at (0,-2.4) {$\vr(\m{G}_1)$};
  \draw (1) -- (2) -- (3);
  \draw[dotted] (3) -- (d);
\end{tikzpicture}
\caption{The subvariety lattices of $\RSA$ and $\GA$.}
\label{fig:RSGA-lattice}
\end{figure}

What about amalgamation in subvarieties of $\RSA$ and $\GA$? Maksimova offered a complete description of $\amal(\RSA)$ and $\amal(\GA)$ in the 1970s as part of her study of amalgamation in Heyting algebras generally; see \cite{Mak77}. From the modern perspective, the best starting point is from the well-known fact that $\slat(\RSA)$ and $\slat(\GA)$ are countably infinite chains, as shown in Figure~\ref{fig:RSGA-lattice}. From this fact, the modern tools described in Section~\ref{sec:tools} make it quite easy to get complete descriptions of $\amal(\RSA)$ and $\amal(\GA)$. We illustrate this in the next proposition, focusing on $\amal(\GA)$; $\amal(\RSA)$ is similar.

\begin{proposition}[cf.~\cite{Mak77}]
The non-trivial varieties of Gödel algebras with the amalgamation property are exactly $\vr(\m{G}_2)$, $\vr(\m{G}_3)$, and $\GA$.
\end{proposition}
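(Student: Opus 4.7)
The plan is to apply Corollary~\ref{c:ess-AP-comsem} to each non-trivial subvariety $\V$ of $\GA$, reducing the \prp{AP} for $\V$ to the \prp{EAP} for $\chain{\V}$. Since $\slat(\GA)$ is the countable chain $\vr(\m{G}_1) \subset \vr(\m{G}_2) \subset \cdots \subset \GA$ and $\chain{\vr(\m{G}_n)} = \{\m{G}_1,\dots,\m{G}_n\}$, this turns the question into elementary combinatorics on finite chains. The basic ingredient is already flagged in the preceding discussion: embeddings between totally ordered G\"{o}del algebras are exactly strictly order-preserving injections fixing both $\ut$ and $\zr$.

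For the positive direction, $\GA$ itself has already been handled in the text. For $\vr(\m{G}_2)$ and $\vr(\m{G}_3)$, a direct case analysis on the finitely many spans in $\chain{\V}$ suffices: every embedding into $\m{G}_2$ (respectively $\m{G}_3$) is uniquely determined once the images of $\ut$ and $\zr$ are fixed, so each essential span admits an amalgam inside $\m{G}_3$ (which contains $\m{G}_1$ and $\m{G}_2$ as subalgebras). Applying Corollary~\ref{c:ess-AP-comsem} then gives the \prp{AP} for $\vr(\m{G}_2)$ and $\vr(\m{G}_3)$.

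For the negative direction, I would exhibit for each $n \geq 4$ an essential span in $\chain{\vr(\m{G}_n)}$ with no amalgam in that class. Labeling $\m{G}_n = \{\ut = c_0 > c_1 > \cdots > c_{n-1} = \zr\}$ and $\m{G}_3 = \{\ut > a > \zr\}$, define $\varphi_1,\varphi_2\colon \m{G}_3 \to \m{G}_n$ by $\varphi_1(a) = c_{n-2}$ and $\varphi_2(a) = c_1$. From the filter description of Heyting congruences, every non-trivial congruence of $\m{G}_n$ identifies $\ut$ with $c_1$, so $\varphi_2$ is essential. In any amalgam $\langle \psi_1, \psi_2\colon \m{G}_n \to \m{D}\rangle$ in $\chain{\vr(\m{G}_n)}$, the identity $\psi_1(c_{n-2}) = \psi_2(c_1)$ combined with the strict chains $\psi_1(c_1) > \cdots > \psi_1(c_{n-2})$ and $\psi_2(c_1) > \cdots > \psi_2(c_{n-2})$, all lying strictly between $\ut$ and $\zr$ in $\m{D}$, produces at least $2(n-2)-1 = 2n-5$ distinct interior elements; so $|D| \geq 2n - 3 > n$, contradicting $\m{D} \in \chain{\vr(\m{G}_n)}$.

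The main subtlety is the essentiality of the second leg of the span: a careless choice will only rule out a one-sided amalgam rather than a genuine failure of the \prp{AP}, and hence will not transfer through Corollary~\ref{c:ess-AP-comsem}. Choosing $\varphi_2(a) = c_1$ is the natural fix, since $(\ut, c_1)$ generates the monolith of $\Con{\m{G}_n}$ and therefore forces the second component of any essential amalgam to be an embedding, pinning down the contradiction above.
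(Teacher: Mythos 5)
Your proof is correct and follows essentially the same route as the paper: reduce to the essential amalgamation property for $\chain{\V}$ via Corollary~\ref{c:ess-AP-comsem}, verify the small cases directly, and for $n\geq 4$ exhibit an essential span of copies of $\m{G}_3$ whose amalgam would force a chain with more than $n$ elements. The only (cosmetic) difference is that you send both legs into $\m{G}_n$ and count $2n-3$ elements, whereas the paper sends one leg into $\m{G}_4$ and counts $m+1$; your observation that essentiality of the second leg is the crucial point matches the paper's choice of hitting the coatom.
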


\begin{proof}
We already concluded that $\GA$ has the amalgamation property. To see that $\vr(\m{G}_2)$ and $\vr(\m{G}_3)$ have the \prp{AP}, note that $\chain{\vr(\m{G}_2)}$ consists up to isomorphism of the algebras $\m{G}_1$ and $\m{G}_2$; $\chain{\vr(\m{G}_3)}$ consists up to isomorphism of the algebras $\m{G}_1,\m{G}_2,$ and $\m{G}_3$. Now, it is straightforward to check that every span of these algebras has an appropriate amalgam.

To see that these are all the non-trivial varieties with the \prp{AP}, suppose for a contradiction that $m\geq 4$ and  $\vr(\m{G}_m)$ has the \prp{AP}. Note that $\chain{\vr(\m{G}_m)}$ consists up to isomorphism of the algebras $\m{G}_1,\m{G}_2,\dots, \m{G}_m$. Consider the following essential span in $\vr(\m{G}_m)$: $\tuple{\varphi_1 \colon \m{G}_3 \to \m{G}_m, \varphi_2 \colon \m{G}_3 \to \m{G}_4}$ with $\varphi_1(-2) = -m+1$, $\varphi_1(-1) = -m+2$, $\varphi_1(0) = 0$; and $\varphi_2(-2) = -3$, $\varphi_2(-1) = -1$, $\varphi_2(0) = 0$.  Then, by Corollary~\ref{c:ess-AP-comsem}, this span has an amalgam $\tuple{\psi_1\colon \m{G}_m\to \m{D}, \psi_2 \colon \m{G}_4 \to \m{D}}$ in $\chain{\vr(\m{G}_m)}$. In particular, in $\m{D}$ we have 
\begin{align*}
\psi_2(-3) < \psi_2(-2) < \psi_2(-1) &= \psi_2(\varphi_2(-1)) \\
												   &= \psi_1(\varphi_1(-1)) \\
												   &= \psi_1(-m+2) < \dots < \psi_1(-1) < \psi_1(0).
\end{align*}
But then $\m{D}$ has at least $m+1$ elements, which contradicts the fact that every totally ordered member of $\chain{\vr(\m{G}_m)}$ has at most $m$ elements.
\end{proof}
Note that $\m{G}_2$ is just the two-element Boolean algebra, and $\vr(\m{G}_2)$ is the variety of all Boolean algebras. Thus, the preceding proposition also covers the case of Boolean algebras.

Essentially the same approach can be used to also describe $\amal(\RSA)$. Note that there is one fewer subvariety with the \prp{AP}, since the bottom element need not be preserved by morphisms in $\RSA$.
\begin{proposition}[cf.~\cite{Mak77}]
The non-trivial varieties of relative Stone algebras with the amalgamation property are exactly $\vr(\m{R}_2)$ and $\RSA$.
\end{proposition}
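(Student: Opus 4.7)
The plan is to mirror the preceding argument for $\GA$, using the fact that $\slat(\RSA)$ is the countable chain $\vr(\m{R}_1)\subset\vr(\m{R}_2)\subset\vr(\m{R}_3)\subset\cdots\subset\RSA$ displayed in Figure~\ref{fig:RSGA-lattice}. This reduces the task to three parts: confirming the \prp{AP} for $\RSA$ and $\vr(\m{R}_2)$, and ruling it out for every $\vr(\m{R}_m)$ with $m\geq 3$.

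First I would reiterate that $\RSA$ itself has the \prp{AP}: this was already noted in the discussion preceding the statement, via the observation that amalgamating totally ordered relative Stone algebras reduces to amalgamating chains with a designated top, combined with Corollary~\ref{c:ess-AP-comsem}. Next, I would handle $\vr(\m{R}_2)$: its totally ordered members are (up to isomorphism) $\m{R}_1$ and $\m{R}_2$, so $\chain{\vr(\m{R}_2)}$ has very few spans, all of which one can check by hand to admit amalgams within $\chain{\vr(\m{R}_2)}$; Corollary~\ref{c:ess-AP-comsem} then delivers the \prp{AP}.

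The main work is to show that $\vr(\m{R}_m)$ fails the \prp{AP} for every $m\geq 3$, and here the absence of a bottom constant in the signature is crucial, since it gives $\m{R}_m$ more subalgebras than its Gödel counterpart. Fix $m\geq 3$ and consider the span $\langle\varphi_1\colon\m{R}_2\to\m{R}_m,\,\varphi_2\colon\m{R}_2\to\m{R}_3\rangle$ defined by $\varphi_1(0)=0$, $\varphi_1(-1)=-m+1$, and $\varphi_2(0)=0$, $\varphi_2(-1)=-1$. The second embedding $\varphi_2$ picks out the top two elements of $\m{R}_3$, so the unique non-trivial non-universal congruence of $\m{R}_3$ (which collapses $\{-1,0\}$) restricts non-trivially to the image of $\varphi_2$; hence $\varphi_2$ is essential. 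Suppose towards a contradiction that this span admits an amalgam $\langle\psi_1\colon\m{R}_m\to\m{D},\,\psi_2\colon\m{R}_3\to\m{D}\rangle$ in $\chain{\vr(\m{R}_m)}$. The commutativity condition forces $\psi_2(-1)=\psi_1(-m+1)$ and $\psi_2(0)=\psi_1(0)$, so in $\m{D}$ we obtain the strict chain
\[
\psi_2(-2)<\psi_2(-1)=\psi_1(-m+1)<\psi_1(-m+2)<\cdots<\psi_1(-1)<\psi_1(0),
\]
which contains at least $m+1$ distinct elements. This contradicts the fact that every member of $\chain{\vr(\m{R}_m)}$ has at most $m$ elements, and so by Corollary~\ref{c:ess-AP-comsem} the variety $\vr(\m{R}_m)$ does not have the \prp{AP}.

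The main obstacle is simply identifying the right essential span: one must verify that $\varphi_2$ is essential (which requires the image to meet every non-trivial congruence class of $\m{R}_3$, and this fails for the alternative choice mapping $-1$ to $-2$), while also arranging matters so that the forced chain in any candidate amalgam exceeds the size allowed in $\chain{\vr(\m{R}_m)}$. Together with the first two items, this covers every non-trivial variety in $\slat(\RSA)$ and establishes the claimed characterization.
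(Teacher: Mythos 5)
Your proof is correct and follows exactly the approach the paper intends when it says ``essentially the same approach'' as for G\"{o}del algebras: positive cases via Corollary~\ref{c:ess-AP-comsem} and small spans, negative cases via an essential span whose amalgam would force a chain of $m+1$ elements inside $\chain{\vr(\m{R}_m)}$. You also correctly identify the one point the paper leaves implicit---that without a bottom constant the two-element chain embeds essentially into $\m{R}_3$ via its top two elements, which is precisely why the failure already starts at $m=3$ rather than $m=4$.
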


\subsection{Sugihara Monoids}\label{sec:sugihara}

Sugihara monoids originated in algebraic studies of relevant logic, and have emerged as important ingredients in structural descriptions of residuated lattices generally. Formally, a \emph{Sugihara monoid} is an idempotent commutative semilinear residuated lattice equipped with an additional involutive negation constant $\zr$. A Sugihara monoid is called \emph{odd} if $\zr \approx \ut$, in which case the expansion of the basic signature by the negation constant $\zr$ is, of course, not necessary. We will denote the variety of Sugihara monoids by $\SM$ and the variety of odd sugihara monoids by $\OSM$.

The varieties of Sugihara monoids with the \prp{AP} are completely classified. Indeed, Marchioni and Metcalfe completely described $\amal(\SM)$ by proving quantifier elimination for suitable elementary classes of totally ordered Sugihara monoids and then applying Theorem~\ref{t:QE-AP}; see \cite{MarMet2012}. 

In order to give the classification, we define the following totally ordered Sugihara monoids:
\begin{align*}
\m{S} &= \alg{\Z,\meet,\join,\cdot,\to, 0,0}, \\
\m{S}_{2m} &= \alg{\{-m,\dots,-1,1,\dots,m \}, \meet,\join, \cdot, \to,1,-1} \quad (m\geq 1), \\
\m{S}_{2m+1} &= \alg{\{-m,\dots,-1,0,1,\dots,m \}, \meet,\join, \cdot, \to,0,0} \quad (m\geq 0),
\end{align*}
Here the order is the obvious one induced by the ordered set of integers $\mathbb{Z}$, and the product and residual are defined as follows, where we write $\lvert x \rvert$ for the absolute value of a number:
\[
x \cdot y = \begin{cases}
x \meet y &\text{if } \lvert x \rvert = \lvert y \rvert \\
y              &\text{if } \lvert x \rvert < \lvert y \rvert \\
x              &\text{if } \lvert x \rvert > \lvert y \rvert \\
\end{cases}
\quad
\text{and}
\quad
x \to y = \begin{cases}
(-x) \join y &\text{if } x \leq y \\
(-x) \meet y & \text{otherwise.}
\end{cases}
\]

\begin{theorem}[{\cite[Theorem 4.4]{MarMet2012}}]\label{thm:Sugihara}
The non-trivial varieties of Sugihara monoids with the amalgamation property are exactly \;$\SM$, $\OSM$, $\vr(\m{S}_2)$, $\vr(\m{S}_3)$, $\vr(\m{S}_4)$, $\vr(\{\m{S}_2,\m{S}\})$, $\vr(\{\m{S}_4,\m{S}\})$, and $\vr(\{\m{S}_2,\m{S}_3\})$.
\end{theorem}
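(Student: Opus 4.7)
The plan is to reduce the problem to amalgamation of chains via Corollary~\ref{c:ess-AP-comsem}, which applies because Sugihara monoids are commutative semilinear residuated lattices (in an expanded signature) and hence have the \prp{CEP}. Thus for each candidate subvariety $\V$ of $\SM$, the task becomes showing that $\chain{\V}$ has, or fails to have, the essential amalgamation property. As a preliminary step I would pin down $\slat(\SM)$: by J\'onsson's Lemma, every finitely generated subvariety of $\SM$ is generated by subalgebras of the $\m{S}_n$, and the embeddings among these chains are tightly controlled by the involution (the midpoint/negation constant must be preserved), so $\m{S}_m$ embeds into $\m{S}_n$ essentially when $m \leq n$ with matching parity. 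This decomposes $\slat(\SM)$ into an ``even tower'' and an ``odd tower'' with $\SM$ (and $\OSM$) sitting above, together with joins such as $\vr(\{\m{S}_2,\m{S}_3\})$ mixing parities.

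For the positive direction, the main tool is quantifier elimination together with Theorem~\ref{t:QE-AP}. For $\SM$, I would work inside the elementary class $\K$ of those totally ordered Sugihara monoids whose underlying chain is dense without endpoints (equivalently, the models elementarily equivalent to a suitable monster); here a back-and-forth argument on $\mathbb{Z}$-like chains, taking advantage of the explicit formulas for $\cdot$ and $\to$ in terms of $|\cdot|$ and the order, yields QE. Since $\iso\sub(\K)$ captures $\chain{\SM}$, Theorem~\ref{t:QE-AP} combined with Corollary~\ref{c:ess-AP-comsem} delivers the \prp{AP}. For $\OSM$ the same strategy works within the elementary subclass of odd chains. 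For the small varieties $\vr(\m{S}_2)$, $\vr(\m{S}_3)$, $\vr(\m{S}_4)$, the class $\chain{\V}$ is finite up to isomorphism, so one can verify the essential amalgamation property by enumerating spans. For the three ``hybrid'' varieties $\vr(\{\m{S}_2,\m{S}\})$, $\vr(\{\m{S}_4,\m{S}\})$, and $\vr(\{\m{S}_2,\m{S}_3\})$, I would adapt the QE argument to a parametrized elementary class that records an additional predicate (or set of constants) isolating the finite subchains, and then appeal again to Theorem~\ref{t:QE-AP}.

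For the negative direction, I would apply Corollary~\ref{c:ess-AP-comsem} and exhibit explicit essential spans in $\chain{\V}$ that admit no amalgam in $\chain{\V}$. For each $\vr(\m{S}_n)$ with $n \geq 5$, the idea is to start from a common subalgebra $\m{S}_k \leq \m{S}_n$ (with matching parity) and embed it into two copies of $\m{S}_n$ (or into $\m{S}_n$ and $\m{S}_{n-1}$) via two different choices of the ``gap'' around the fixed point of the negation; any chain amalgamating them must contain strictly more elements than $\m{S}_n$, contradicting $\vr(\m{S}_n) \subseteq \chain{\V}$. Similar gap-mismatch spans rule out the remaining joins not on the list (for instance, $\vr(\{\m{S}_3,\m{S}\})$ or $\vr(\{\m{S}_2,\m{S}_4\})$), where the failure stems from the incompatibility between odd and even parities once the chain grows beyond length three.

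The main obstacle will be the negative direction: it requires a systematic combinatorial argument showing that \emph{every} subvariety outside the explicit list of eight harbors a bad essential span. One must exploit the rigidity imposed by the involutive constant---every embedding of Sugihara chains fixes the negation's fixed point (when it exists) and preserves the induced absolute-value structure---so that mismatches between generator heights or parities cannot be repaired inside the variety. Identifying a uniform family of obstructing spans, indexed by the position of $\V$ in $\slat(\SM)$, is the technically delicate part; the positive cases are comparatively routine once the right elementary class has been isolated.
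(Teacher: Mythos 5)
Your overall strategy---reducing to totally ordered algebras via the essential-amalgamation machinery and then combining quantifier elimination with Theorem~\ref{t:QE-AP} for the infinite varieties and explicit obstructing spans for the failures---is exactly the route the paper attributes to Marchioni and Metcalfe, blended with the algebraic reformulation via $\slat(\SM)$ that the paper mentions as an alternative. However, two concrete steps in your sketch would fail as written. First, the elementary class you propose for $\SM$ cannot work: in any totally ordered Sugihara monoid with $\zr \neq \ut$ there is \emph{no} element strictly between $\zr$ and $\ut$, since $\zr < a < \ut$ would give $\zr < \neg a < \ut$ and hence $a \cdot \neg a = a \meet \neg a > \zr$, contradicting $a \cdot \neg a \leq \zr$. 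So no even totally ordered Sugihara monoid embeds into one whose underlying chain is dense, and $\iso\sub$ of your class of densely ordered models contains only odd chains---it does not capture $\chain{\SM}$, and Theorem~\ref{t:QE-AP} then says nothing about the even part of $\SM$. You must instead use a class that is dense away from the forced central gap, or treat the even and odd chains separately and recombine them (e.g., via the decomposition proposition for $\K = \K_1 \cup \K_2$ in Section~\ref{sec:tools}).

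Second, your description of $\slat(\SM)$ as two independent parity towers is inaccurate, and this undermines the completeness of the negative direction. Embeddings do preserve parity (they preserve whether $\ut = \zr$), but homomorphic images do not: collapsing $\ut$ and $\zr$ in $\m{S}_{2m+2}$ (sending $\pm k \mapsto \pm(k-1)$) is a surjective homomorphism onto $\m{S}_{2m+1}$, so every subvariety containing $\m{S}_{2m+2}$ already contains $\m{S}_{2m+1}$; for instance $\m{S}_3 \in \chain{\vr(\m{S}_4)}$ and $\vr(\{\m{S}_3,\m{S}_4\}) = \vr(\m{S}_4)$. Consequently the lattice of subvarieties is more entangled than an even/odd split, the classes $\chain{\V}$ you must enumerate for the small varieties are larger than you indicate, and the ``uniform family of obstructing spans indexed by position in $\slat(\SM)$'' has to be organized against the correct lattice. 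Both defects are repairable, but as written the positive case for $\SM$ and the exhaustiveness of the negative case are not established.
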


\begin{corollary}
The non-trivial varieties of odd Sugihara monoids with the amalgamation property are exactly $\OSM$ and $\vr(\m{S}_3)$
\end{corollary}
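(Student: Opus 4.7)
The plan is to derive the corollary directly from Theorem~\ref{thm:Sugihara} by simply identifying which of the eight listed varieties of Sugihara monoids with the amalgamation property are actually varieties of odd Sugihara monoids. The key point is that a variety of odd Sugihara monoids can be regarded, without essential change, as a subvariety of $\SM$ all of whose members satisfy $\zr \approx \ut$, and under this identification the amalgamation property transfers between the two settings.

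First I would observe that since odd Sugihara monoids are by definition those Sugihara monoids satisfying $\zr \approx \ut$, the constant $\zr$ is definable from $\ut$ in this setting, and hence homomorphisms of odd Sugihara monoids in the reduced signature coincide with homomorphisms of their canonical expansions as Sugihara monoids. Consequently, a variety of odd Sugihara monoids has the \prp{AP} if and only if it has the \prp{AP} when viewed as a subvariety of $\SM$ contained in $\OSM$. In particular, every non-trivial variety of odd Sugihara monoids with the \prp{AP} must appear in the list provided by Theorem~\ref{thm:Sugihara}.

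Next I would go through that list, namely $\SM$, $\OSM$, $\vr(\m{S}_2)$, $\vr(\m{S}_3)$, $\vr(\m{S}_4)$, $\vr(\{\m{S}_2,\m{S}\})$, $\vr(\{\m{S}_4,\m{S}\})$, and $\vr(\{\m{S}_2,\m{S}_3\})$, and determine which of them lie inside $\OSM$. Inspecting the defining data, the algebras $\m{S}_{2m}$ have $\zr = -1 \neq 1 = \ut$ and are therefore not odd, while $\m{S}$ and the odd-indexed $\m{S}_{2m+1}$ satisfy $\zr = \ut$. This immediately rules out $\SM$, $\vr(\m{S}_2)$, $\vr(\m{S}_4)$, $\vr(\{\m{S}_2,\m{S}\})$, $\vr(\{\m{S}_4,\m{S}\})$, and $\vr(\{\m{S}_2,\m{S}_3\})$, leaving exactly $\OSM$ and $\vr(\m{S}_3)$ as the non-trivial varieties of odd Sugihara monoids with the \prp{AP}.

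There is no serious obstacle here; the only point requiring a moment's care is the formal matching of signatures between $\OSM$ and its image in $\SM$, which is immediate because $\zr$ becomes redundant as soon as $\zr \approx \ut$ holds. The whole corollary is thus essentially a bookkeeping consequence of Theorem~\ref{thm:Sugihara}.
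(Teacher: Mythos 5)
Your proposal is correct and matches the paper's (implicit) argument: the corollary is stated there without proof as an immediate consequence of Theorem~\ref{thm:Sugihara}, obtained exactly by noting that the amalgamation property transfers between the signatures with and without $\zr$ when $\zr\approx\ut$ holds, and then reading off which of the eight listed varieties lie inside $\OSM$. Your signature bookkeeping and the identification of $\OSM$ and $\vr(\m{S}_3)$ as the only candidates contained in $\OSM$ are both accurate.
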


An algebraic proof of Theorem~\ref{thm:Sugihara} can be obtained by using the explicit description of $\slat(\SM)$ of \cite{MarMet2012} together with the algebraic techniques discussed in Section~\ref{sec:tools}.

Note that $\m{S}_2$ is yet another presentation for the two-element Boolean algebra, so there is overlap between Theorem~\ref{thm:Sugihara} and the previously mentioned results on G\"{o}del algebras/relative Stone algebras.

Actually, although it is not obvious, there is a deep connection between Sugihara monoids and relative Stone algebras and this can be exploited to obtain alternative proofs of the \prp{AP}. \cite{GalRaf2012} shows that the category of odd Sugihara monoids and homomorphisms is equivalent to the category of relative Stone algebras and homomorphisms, and applies this equivalence to obtain the \prp{AP} for the variety of odd Sugihara monoids. The follow-up to the aforementioned work, \cite{GalRaf2015}, extends the categorical equivalence to the category of arbitrary (not necessarily odd) Sugihara monoids and homomorphisms and the category of relative Stone algebras enriched with an additional unary operation and constant, giving some similar applications. Later on, \cite{FusGal2019} sharpens the equivalence further, showing that the category of Sugihara monoids is equivalent to the category of relative Stone algebras endowed with an additional constant whose upset is a Boolean algebra. Because the \prp{AP} is a categorical property, equivalences of the this kind can be a powerful tool for quickly establishing the \prp{AP}.

\begin{remark}\label{rem:central}
Any cyclic involutive totally ordered idempotent residuated lattice is necessarily commutative, i.e., a Sugihara monoid. Let $\m{A}$ be such an algebra and $\zr$ its involutive negation constant  with involution $\neg$. Then we have 
$\ut \ovr \zr = \neg(\zr \cdot \neg\ut) = \neg(\neg\ut \cdot \zr) = \zr\under \ut$, so $\ut \ovr \zr = \zr \under \ut$. Hence, by \cite[Lemma 3.5]{FG1}, $\zr$ is central. But then also for each $a \in A$, $\ut \ovr a = \neg(a \cdot \zr) = \neg(\zr \cdot a) = a \under \ut$, so, by \cite[Lemma 3.5]{FG1}, every element of $\m{A}$ is central, i.e., $\m{A}$ is commutative.  Thus, there is no direct non-commutative analogue of Sugihara monoids. Note, however, that if we drop the assumption that $\zr$ is a cyclic element, then $\m{A}$ need not be commutative.
\end{remark}

\subsection{Idempotent Varieties in General}\label{sec:general}

Taking stock, we have seen in Sections~\ref{sec:godel} and \ref{sec:sugihara} that we can get a complete understanding of amalgamation in idempotent semilinear varieties when either (1) $\cdot$ coincides with $\meet$, or (2) there is a cyclic involutive negation constant. These assumptions are quite stringent, and their presence imposes a quite rigid structure on totally ordered algebras. Surprisingly, even in the absence of these assumptions, enough of this underlying structure remains intact that we can get a very good understanding of amalgamation in general idempotent semilinear varieties, although the picture is much more complicated.

Notably, among general idempotent semilinear varieties, commutativity has quite an interesting connection with the \prp{AP}. In light of Remark~\ref{rem:central}, each of the previously mentioned assumptions (1) and (2) entails commutativity. Indeed, until recently the absence of examples of non-commutative varieties with the \prp{AP} has been rather conspicuous---so much so that Gil-F\'{e}rez, Ledda, and Tsinakis asked in \cite[Problem~5]{GLT2015} whether there are any such examples at all. This question was answered in \cite{GJM2020}, which exhibits a variety generated by a single strictly simple finite non-commutative totally ordered idempotent residuated lattice that has the \prp{AP} (see Example~\ref{ex:strictly-simple}). Quite surprisingly, it turns out that commutativity is actually an \emph{obstacle} to an idempotent semilinear variety having the \prp{AP}, in the sense illustrated by the results below. 

One of the keys to understanding amalgamation in general idempotent semilinear varieties is the \emph{nested sum}. The latter is just one of several constructions in the literature that focus on constructing new residuated lattices from old ones by `replacing' the unit $\ut$ of a given residuated lattice by another residuated lattice. Constructions of this flavor have proven very effective in studies of amalgamation, since they allow for a `component-wise' amalgamation of a span, and we will see them again in Section~\ref{sec:cancellative}. 

Due to their importance in amalgamation, we pause to give the technical details of the nested sum construction. First, a totally ordered residuated lattice $\m{A}$ is called \emph{admissible} if $a\under \ut, \ut \ovr a \notin \{\ut\}$ for each $a \in A{\setminus}\{\ut\}$. Given a totally ordered set $\alg{I,\leq}$ whose top element (if it exists) is denoted by  $\top$, we say that an indexed family $(\m{A}_i)_{i\in I}$ is \emph{admissible} if for each $i\in I{\setminus}\{\top\}$, $\m{A}_i$ is admissible. Let $\alg{I,\leq}$ be a non-empty chain, and $(\m{A}_i)_{i\in I}$ and admissible family of totally ordered residuated lattices $\alg{A_i,\meet_i,\join_i,\cdot_i,\under_i,\ovr_i,\ut}$, where we assume that $A_i \cap A_j = \{\ut \}$  for $i\neq j$. The \emph{nested sum} $\nsum_{i\in I} \m{A}_i$ is the totally ordered residuated lattice with universe $A = \bigcup_{i\in I} A_i$ that is defined as follows. Its order  $\leq$ is the smallest partial order on $A$ satisfying
\begin{enumerate}
\item ${\leq_i} \subseteq {\leq}$ for each $i\in I$;
\item if $i<j$, $x\in A_i$, $y\in A_j$, and $x<_i \ut$, then $x \leq y$;
\item if $i<j$, $x\in A_i$, $y\in A_j$, and $\ut<_i  x$, then $y \leq x$.
\end{enumerate}
Moreover for $\ast \in \{\cdot, \under, \ovr\}$, we define $x \ast y = x \ast_i y$ if $x,y \in A_i$, and for $x \in A_i$, $y \in A_j$ with $i<j$, we let $x \ast y = x \ast_i \ut$ and $y \ast x = \ut \ast_i x$. If $\alg{ I, \leq}$ is the $n$-element chain $\alg{\{1,\dots, n\}, \leq}$ we also write $\nsum_{i=1}^n \m{A}_i$ for $\nsum_{I} \m{A}_i$. In particular if $n=2$, we write $\m{A}_1 \nsum \m{A}_2$.

As explained in \cite{FG1,FG2}, there are a few term operations that turn out to be quite significant in the study of totally ordered idempotent residuated lattices. We define $x^\ell = \ut \ovr x$, $x^r = \ut \under x$, and $x^{\starm} = x^\ell \meet x^r$.
A idempotent semilinear  residuated lattice $\m{A}$ is called \emph{$^{\starm}$-involutive} or \emph{lower involutive} if it satisfies $x^{{\starm}{\starm}} = x$. Lower involutive totally ordered idempotent residuated lattices play an important role in the general structure theory of totally ordered idempotent residuated lattices, and they admit the following tidy description.

\begin{lemma}[{\cite[Lemma~4.22]{FG2}}]\label{lem:inv decomp}
Every lower involutive totally ordered idempotent residuated lattice is the nested sum of its $1$-generated subalgebras.
\end{lemma}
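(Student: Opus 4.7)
The plan is to realize $\m{A}$ as the nested sum of its 1-generated subalgebras by directly exhibiting the appropriate indexing chain and admissible family. For each $a \in A$ set $\m{A}_a := \langle a\rangle$; this is a totally ordered idempotent residuated lattice, and trivially $\m{A}_\ut = \{\ut\}$. I would first check that every nontrivial $\m{A}_a$ is admissible: given $x \in A_a \setminus \{\ut\}$, the lower-involutive property guarantees $x^\starm \in A_a$ and $x^{\starm\starm} = x \neq \ut$, whence $x^\starm \neq \ut$. Basic residuation yields $x^\ell, x^r \geq \ut$ whenever $x < \ut$ and $x^\ell, x^r < \ut$ whenever $x > \ut$; in the latter case admissibility at $x$ is immediate, and in the former case $x^\starm = x^\ell \meet x^r \neq \ut$ forces both $x^\ell > \ut$ and $x^r > \ut$.

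The main obstacle is to establish the partition property: for any non-identity elements $a, b \in A$, either $\m{A}_a = \m{A}_b$ or $A_a \cap A_b = \{\ut\}$; equivalently, each nontrivial 1-generated subalgebra is regenerated by every one of its non-identity elements. My plan is to identify a concise normal form for the elements of $\langle a\rangle$, exploiting two structural facts. First, in any totally ordered idempotent residuated lattice the product of two elements lying on the same side of $\ut$ coincides with the appropriate lattice operation (by idempotence and monotonicity), so the only nontrivial products in $\langle a\rangle$ are those mixing $a$ with $a^\starm$, together with their residuals. Second, lower involutivity imposes a $^\starm$-symmetry on the resulting set of elements. A careful case analysis on whether $b \in \langle a\rangle$ lies above or below $\ut$ should then show that $b$ and $b^\starm$ recover $a$ and $a^\starm$, so that $a \in \langle b\rangle$ and $\langle a\rangle = \langle b\rangle$.

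Granted the partition, I would index the distinct 1-generated subalgebras by a chain $\alg{I, \leq}$ by declaring $\m{A}_i < \m{A}_j$ iff every $y > \ut$ in $A_j$ lies strictly below every $x > \ut$ in $A_i$; by $^\starm$-symmetry this is equivalent to saying every $y < \ut$ in $A_j$ lies strictly above every $x < \ut$ in $A_i$. Totality follows from the fact that $\m{A}$ is itself totally ordered. It then remains to verify that the nested sum $\nsum_{i \in I}\m{A}_i$ is isomorphic to $\m{A}$: the order condition is built into the construction of the chain on $I$, while for mixed-component operations the prescription $x \ast y = x \ast_i \ut$ and $y \ast x = \ut \ast_i x$ (for $x \in A_i$, $y \in A_j$, $i < j$) reduces, via the same observation about products on the same side of $\ut$, to a direct absorption check in $\m{A}$.
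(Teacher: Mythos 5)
The paper does not prove this lemma itself---it is imported from \cite{FG2}---so your argument can only be assessed on its own terms. Its architecture is the natural one and, as far as I can tell, the one the cited source follows: take the $1$-generated subalgebras as the components, check admissibility, show that distinct components meet only in $\{\ut\}$, linearly order them, and verify the nested-sum clauses. The parts you actually carry out are correct: the admissibility argument via $x^{{\starm}{\starm}}=x$ is sound, and so is the reduction of same-side products to lattice operations (for $x\le y\le\ut$ one has $x=xx\le xy\le x\ut=x$, and dually above $\ut$), which together with conservativity does make the mixed-component operation check routine once the order structure is in place.

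Two essential points are missing, however. First, the partition property, which you rightly call the main obstacle, remains a plan rather than a proof: the ``concise normal form'' for $\langle a\rangle$ and the ``careful case analysis'' showing that every non-unit $b\in\langle a\rangle$ regenerates $a$ is exactly where all the work of the cited lemma lives, and in the non-commutative setting it is genuinely delicate---$\langle a\rangle$ need not be a three-element chain, and one must track $a^\ell$, $a^r$, their iterates and their products with $a$, using residuation identities together with lower involutivity to close the orbit. Second, and more seriously, the claim that totality of your component order ``follows from the fact that $\m{A}$ is itself totally ordered'' is not correct as stated. Totality of the relation you define requires that distinct components do not \emph{interleave}: one must exclude configurations $\ut<b_1<a<b_2$ with $b_1,b_2$ in one component and $a$ in another, and this follows neither from $\m{A}$ being a chain nor from the partition property alone, which is purely set-theoretic. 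Non-interleaving---that for any two distinct components the non-unit part of one lies entirely inside the gap around $\ut$ determined by the other---needs its own argument, and in practice it comes out of the same structural analysis of $\langle a\rangle$ that you have deferred. Until both points are supplied, what you have is a correct outline, not a proof.
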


The previous lemma is used in \cite{FG2} to show that the variety of lower involutive idempotent semilinear residuated lattices has the \prp{AP}.

In the presence of commutativity, a totally ordered idempotent residuated lattice is lower involutive if and only if it satisfies $(x\to\ut)\to\ut \approx x$, i.e., if and only if it is an odd Sugihara monoid. Thus, in the commutative case, the preceding lemma states that every totally ordered odd Sugihara monoid is the nested sum of copies of the three-element Sugihara monoid.

Note that \cite{Galatos2004} gives an uncountable family of subvarieties of $\nSRL{1}$, each of which contains non-commutative algebras. Through an analysis of these varieties using Lemma~\ref{lem:inv decomp} and Theorem~\ref{t:APmain}, \cite{FMS2023} exhibits continuum-many subvarieties of idempotent semilinear residuated lattices with the \prp{AP}. Hence:

\begin{theorem}[{\cite[Theorem~A]{FMS2023}}]\label{thm:idem noncom}
$\amal(\nSRL{1})$ has cardinality of the continuum. In fact, there are continuum-many varieties of idempotent semilinear residuated lattices that have the amalgamation property and contain non-commutative members.
\end{theorem}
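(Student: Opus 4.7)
The proof splits cleanly into a soft upper bound and a substantive lower bound. The upper bound $|\amal(\nSRL{1})| \leq 2^{\aleph_0}$ is immediate: the signature of residuated lattices is finite, so there are only countably many equations in countably many variables, whence $|\slat(\nSRL{1})| \leq 2^{\aleph_0}$. All of the content lies in producing continuum-many non-commutative subvarieties that each have the \prp{AP}; this simultaneously supplies the matching lower bound and the ``in fact'' clause.

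My starting point would be the family $\{\V_X\}_{X \in \mathcal{X}}$ of subvarieties of $\nSRL{1}$ produced in \cite{Galatos2004}, indexed by a set $\mathcal{X}$ of cardinality $2^{\aleph_0}$, with the properties that distinct indices give distinct varieties and each $\V_X$ contains a non-commutative totally ordered member. The task reduces to verifying $\V_X \in \amal(\nSRL{1})$ for every $X \in \mathcal{X}$. Because $\nSRL{1}$ has the \prp{CEP}, so does each $\V_X$, and hence Corollary~\ref{c:ess-AP-sem} reduces the problem to showing that $\chain{\V_X}$ has the essential amalgamation property.

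To verify the \prp{EAP} for $\chain{\V_X}$, I would exploit the nested sum machinery. Lemma~\ref{lem:inv decomp} and its generalizations should allow me to decompose any totally ordered member of $\V_X$ as a nested sum $\nsum_{i \in I} \m{A}_i$ whose summands $\m{A}_i$ are ``small'' indecomposable pieces belonging to $\chain{\V_X}$. Given an essential span $\tuple{\varphi_1 \colon \m{A} \to \m{B}, \varphi_2 \colon \m{A} \to \m{C}}$ in $\chain{\V_X}$, the essentiality of $\varphi_2$ (combined with the rigidity of embeddings between nested sums) should force the decompositions of $\m{B}$ and $\m{C}$ to be aligned with that of $\m{A}$ along $\varphi_1$ and $\varphi_2$. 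I would then amalgamate component by component---each local amalgamation being either trivial, when the corresponding summands of $\m{B}$ and $\m{C}$ agree along $\m{A}$, or a simple amalgamation among the indecomposable building blocks of $\V_X$---and reassemble the resulting pieces as a single nested sum. This yields the desired amalgam in $\chain{\V_X}$.

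The main obstacle is uniform control of this construction across all of $\mathcal{X}$: I must show, for every $X$, both that (i) totally ordered members of $\V_X$ genuinely decompose as nested sums of indecomposable pieces still lying in $\V_X$, and (ii) $\V_X$ is closed under nested sums of admissible families of such pieces. Both points demand a careful reading of the defining equations of the Galatos family against the nested sum construction, and this is where the real combinatorial work sits. Once this closure is in hand, the alignment and component-wise amalgamation steps are essentially formal, and Theorem~\ref{t:APmain} together with Corollary~\ref{c:ess-AP-sem} packages the conclusion.
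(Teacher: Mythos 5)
Your plan follows essentially the same route the paper indicates for this theorem: start from the uncountable family of non-commutative idempotent semilinear varieties from \cite{Galatos2004}, reduce the \prp{AP} to the essential amalgamation property for chains via the \prp{CEP} together with Theorem~\ref{t:APmain} and Corollary~\ref{c:ess-AP-sem}, and amalgamate component-wise using the nested sum decomposition of Lemma~\ref{lem:inv decomp}. The paper itself only sketches this argument, deferring to \cite{FMS2023} exactly the closure and alignment details you correctly identify as the substantive work, so at the level of detail given there your proposal is a faithful match.
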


Nested sum decompositions are also useful in the commutative case. To take full advantage of them, we first introduce a family of  commutative totally ordered idempotent residuated lattices that will constitute building blocks for some nested sum decompositions. For $m,n \in \N$, we define the totally ordered idempotent residuated lattice $\Com{m}{n} = \alg{\com{m}{n}, \meet, \join, \cdot, \under, \ovr,\ut}$ with $\com{m}{n} = \{b_m < \dots < b_0 < \ut < a_n < \dots < a_0\}$, where $a_i \cdot a_j = a_{\min(i,j)} = a_i \join a_j$, $b_k \cdot b_l = b_{\max(k,l)} = b_k \meet b_l$, and $a_i\cdot b_k = b_k \cdot  a_i = b_k$. Moreover, we recall that $\m{R}_p$ denotes the $p$-element totally ordered relative Stone algebra.

\begin{lemma}[{\cite[Lemma~3.10]{FMS2023}}]\label{lem:com decomp}
Every finite totally ordered idempotent commutative residuated lattice is isomorphic to a nested sum of the form $(\nsum_{i=1}^k \Com{m_i}{n_i}) \boxplus \m{R}_p$.
\end{lemma}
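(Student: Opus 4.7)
The plan is by induction on $|A|$, peeling off the outermost component at each step. The case $|A| = 1$ is immediate (empty nested sum followed by $\m{R}_1$). For the inductive step, two preliminary observations govern products in $\m{A}$. First, for $x \leq y \leq \ut$, we have $xy \leq x \cdot \ut = x$ and $xy \geq x^2 = x$, so $xy = x = x \meet y$; dually, $xy = x \join y$ for $\ut \leq x \leq y$. Second, for $a > \ut > b$, we have the \emph{mixed dichotomy} $ab \in \{a, b\}$: using $b \leq ab \leq a$ together with idempotence and the first observation, if $ab < \ut$ then $ab = (ab) b = (ab) \meet b = b$; if $ab > \ut$ then $ab = a(ab) = a \join (ab) = a$; and $ab = \ut$ is excluded since $a(ab) = a^2 b = ab$ would force $a = \ut$. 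The dichotomy is monotone: ``positive wins'' (i.e., $ab = a$) is preserved under enlarging $a$ among the positives or enlarging $b$ within the negatives. Setting $a^* := a \to \ut$, monotonicity identifies $a^*$ with the largest negative $b$ for which $ab = b$; dually, $b^*$ is the largest positive with $ab = a$, or $\ut$ if none exists.

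If $\m{A}$ has no positive element, the first observation yields $\m{A} \cong \m{R}_{|A|}$, settling the base case. Otherwise, let $\top$ denote the maximum element of $\m{A}$ and isolate the outermost piece as follows: partition positives into ``blocks'' by equality of $a^*$ and negatives into blocks by equality of $b^*$, and let $A_1$ consist of $\ut$, the positives attaining the smallest value of $a^*$, and the negatives attaining $b^* = \top$. A short verification shows that the common value $b_* := a^*$ of the positives in $A_1$ is itself in $A_1$ and is its largest negative: from $ab_* = b_*$ for all $a \in A_1$ positive we get $b_*^* \geq \max\{a \in A_1 : a > \ut\} = \top$, so $b_* \in A_1$, and any $b \in A_1$ negative satisfies $b \leq \top^* = b_*$. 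Monotonicity of the dichotomy then yields that $A_1$ is a subalgebra: positive--positive and negative--negative products remain in $A_1$ as joins and meets, and mixed products $ab$ with $a, b \in A_1 \setminus \{\ut\}$ satisfy $b \leq b_* = a^*$ and so $ab = b \in A_1$. A direct check produces $A_1 \cong \Com{m_1}{n_1}$, with $n_1 + 1$ and $m_1 + 1$ the numbers of positive and negative elements of $A_1$.

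The main obstacle is showing that $A' := (A \setminus A_1) \cup \{\ut\}$ is a subalgebra and that $\m{A} = A_1 \boxplus A'$ as a nested sum. Closure of $A'$ under $\meet, \join, \cdot$ is immediate from the dichotomy ($xy \in \{x,y\} \subseteq A'$ for $x, y \in A'$). For closure under $\to$, let $\bot'$ denote the minimum of $A' \setminus \{\ut\}$ when nonempty; the threshold argument yields $x \cdot \bot' = \bot'$ for every $x \in A'$, so $\bot' \leq y$ makes $\bot'$ a witness to $x \to y$. No outer positive $z \in A_1$ can serve as the maximum witness, because the same threshold argument gives $xz = z$, which exceeds every inner $y$; and any outer negative witness is strictly dominated by $\bot'$. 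Hence $x \to y \in A'$. The nested sum structure is then routine to verify: positives of $A_1$ dominate all of $A'$, negatives of $A_1$ are dominated by all of $A'$, and across pieces the outer element always wins in mixed products (by the threshold characterization, since outer positives have $a^* = b_*$ below every inner negative). Applying the induction hypothesis to $A'$ yields $A' \cong (\boxplus_{i=1}^{k-1} \Com{m_i'}{n_i'}) \boxplus \m{R}_p$, and prepending $A_1$ produces the required form $(\boxplus_{i=1}^k \Com{m_i}{n_i}) \boxplus \m{R}_p$.
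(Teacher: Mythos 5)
The survey only cites this lemma from \cite{FMS2023} and does not reproduce a proof, so there is nothing in the paper itself to compare your argument against. Judged on its own terms, your proof is essentially correct and follows the natural ``peel off the outermost component'' induction. All the key ingredients are present and sound: the computations $xy = x\meet y$ below $\ut$ and $xy = x\join y$ above $\ut$; the dichotomy $ab\in\{a,b\}$ for $a>\ut>b$ with $ab=\ut$ excluded; the monotone threshold structure governed by $a^{*}=a\to\ut$; the identification of the outermost block $A_1$ as the positives with minimal $a^{*}$ together with the negatives below $b_{*}=\top\to\ut$; and the verification that $A'=(A\setminus A_1)\cup\{\ut\}$ sits inside $A_1$ in the nested-sum configuration, so that the induction hypothesis applies.

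Three small repairs are needed. First, your gloss of $b^{*}$ is wrong as stated: $b\to\ut$ is the largest \emph{positive} $a$ with $ab=b$ (equivalently $ab\leq\ut$), not with $ab=a$; the set of positives $a$ with $ab=a$ is an up-set, so its maximum is always $\top$ when nonempty and carries no information. Fortunately every actual use of $b^{*}$ in your argument runs through the residuation adjunction $ab\leq\ut \Leftrightarrow a\leq b^{*}$, so nothing downstream is affected. Second, the claim that $x\cdot\bot'=\bot'$ for all $x\in A'$ presupposes that $\bot'=\min(A'\setminus\{\ut\})$ is a negative element; if $A'$ consisted of $\ut$ and positives only, this would fail. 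You should add the one-line observation that this cannot happen: any positive $x\in A'$ has $b_{*}\leq x^{*}<\ut$ with $x^{*}\neq b_{*}$ (since $x\notin A_1$), so $x^{*}$ is a negative element of $A'$. Third, you call $A_1$ a subalgebra after checking only $\meet$, $\join$, $\cdot$; either check $\to$ as well (it does close), or note that since the residual of a residuated lattice is determined by its order and product, matching these on all of $A$ already forces $\m{A}\cong\Com{m_1}{n_1}\nsum\m{A}'$. With these additions the argument is complete.
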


In the non-commutative case, even $1$-generated totally ordered idempotent residuated lattices may be infinite. However, in the presence of commutativity, idempotent semilinear residuated lattices are locally finite, as shown in \cite{GJM2020}. Using this fact, in conjunction with Lemma~\ref{lem:com decomp} and Theorem~\ref{t:APmain}, \cite{FMS2023} shows that there are only $60$ varieties of commutative idempotent semilinear residuated lattices with the \prp{AP}.

\begin{theorem}[{\cite[Theorem~B]{FMS2023}}]\label{thm:idem com}
$|\amal(\nCSRL{1})| = 60$. That is, there are exactly 60 varieties of commutative idempotent semilinear residuated lattices with the \prp{AP}.
\end{theorem}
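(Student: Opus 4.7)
The plan is to reduce this classification to a finite combinatorial search by combining all the tools assembled earlier. Since $\nCSRL{1}$ is locally finite (recalled from \cite{GJM2020} above the theorem), every finitely generated member of $\chain{\V}$, for any $\V\in\slat(\nCSRL{1})$, is finite. Combining this with Corollary~\ref{c:ess-AP-comsem}, $\V$ has the \prp{AP} if and only if every essential span of finite chains in $\chain{\V}$ has an amalgam in $\chain{\V}$. Lemma~\ref{lem:com decomp} then gives a normal form $(\nsum_{i=1}^k \Com{m_i}{n_i}) \nsum \m{R}_p$ for the finite chains, converting the problem into a combinatorial analysis parameterized by the integer tuples $(k, (m_i,n_i)_{i\leq k}, p)$.

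First, I would describe $\slat(\nCSRL{1})$ concretely in these terms. By J\'{o}nsson's Lemma (applicable since residuated lattices are congruence-distributive) and local finiteness, each subvariety $\V$ is determined by its $\hm\sub$-closed class $\chain{\V}\cap\Kfin$ of finite chains. Computing the subalgebras and quotients of a normal-form chain $(\nsum_{i=1}^k \Com{m_i}{n_i}) \nsum \m{R}_p$---which boils down to choosing sub-blocks $\Com{m_i'}{n_i'}$ with $m_i'\leq m_i$, $n_i'\leq n_i$, deleting some blocks, and shrinking the final $\m{R}_p$---gives an explicit description of these $\hm\sub$-closed classes, and hence of the candidate subvarieties.

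Next, for each candidate subvariety I would apply Corollary~\ref{c:ess-AP-comsem}. The strategy is two-pronged. On the constructive side, given an essential span of finite chains in normal form whose common subalgebra sits inside matched block positions, I would amalgamate block-by-block by enlarging pairs $\Com{m}{n}$ into a common $\Com{m''}{n''}$ and then reassembling the nested sum; the success of this componentwise procedure depends only on the block sizes permitted in the candidate subvariety. On the obstructive side, I would exhibit essential spans in which any candidate amalgam is forced to contain a block whose parameters exceed the bounds of $\V$; this is where the essentialness hypothesis is crucial, preventing escape by collapsing the second factor of the span.

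The main obstacle will be the exhaustive case analysis that pins the count at exactly $60$. I expect the $60$ to arise as a product of independently bounded parameters---a small bound on the final component $\m{R}_p$ (in analogy with the three AP-varieties in $\slat(\RSA)$), separate bounds on the admissible pairs $(m,n)$ for interior blocks $\Com{m}{n}$, and a bound on the number $k$ of interior blocks together with restrictions on how they may be mixed inside a single nested sum. The combinatorial bookkeeping of ruling out all supra-$60$ configurations, and verifying the \prp{AP} for each of the $60$ survivors by a uniform amalgamation recipe, is the technical heart of the argument, but is made tractable by Lemma~\ref{lem:com decomp}, local finiteness, and the reduction to essential amalgamation.
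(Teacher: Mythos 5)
Your proposal follows essentially the same route the paper indicates for this result: local finiteness of commutative idempotent semilinear residuated lattices (from \cite{GJM2020}), the normal-form nested sum decomposition of finite chains from Lemma~\ref{lem:com decomp}, and the reduction to essential amalgamation of finite chains via Theorem~\ref{t:APmain} and Corollary~\ref{c:ess-AP-comsem}, followed by a combinatorial case analysis. The paper itself only sketches these same ingredients and defers the exhaustive bookkeeping that pins the count at $60$ to \cite{FMS2023}, so your plan is aligned with the intended argument.
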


In fact, \cite[Section~5]{FMS2023} explicitly describes $\amal(\nCSRL{1})$. Further, even with the addition of an extra constant to the signature (which a priori need satisfy no assumptions), there are still finitely many commutative idempotent semilinear varieties with the \prp{AP}.

Taken together, Theorems~\ref{thm:idem noncom} and \ref{thm:idem com} paint a rather unexpected picture of commutativity's role in amalgamation: Although only a decade ago it was unclear whether there are any non-commutative varieties with the \prp{AP} at all \cite{GLT2015}, it is now clear that, at least in some contexts, commutativity may restrict the scope of the \prp{AP}. This raises the following open question:

\begin{question}
Are there uncountably many varieties of commutative semilinear residuated lattices with the amalgamation property?
\end{question}

Note that, without the restriction to semilinear varieties, this question is resolved: It is shown in \cite{FS23} that there are continuum-many varieties of commutative residuated lattices with the \prp{AP}. However, most of the members of the varieties constructed in \cite{FS23} are not even distributive.

As a final remark regarding idempotent semilinear varieties, we note that techniques similar to those referenced in this paper have also been successful in studying the \prp{AP} in adjacent, non-residuated algebras. In \cite{Santschi2024}, nested sum decompositions are used in the context of semilinear idempotent distributive $\ell$-monoids to identify most of the varieties of the latter with the \prp{AP}.


\section{Dropping Idempotence: Knotted Varieties and More}\label{sec:knotted}

As Section~\ref{sec:idempotent} attests, we now have a rather sharp picture of how amalgamation works in idempotent semilinear varieties. In this section, we will see what happens when idempotence is weakened or dropped entirely. Most notably, we give an elementary proof that the \prp{AP} fails for the variety of all commutative semilinear residuated lattices, answering the question posed in \cite[Problem~8]{GLT2015} and \cite[p.~205]{MPT23}. In fact, our proof of this applies to a large number of varieties lying in between $\SemRL$ and the variety of idempotent semilinear residuated lattices, as well as their expansions by negation constants and bounds.

Our discussion is best expressed in reference to \emph{knotted inequalities}, which have the form $x^m\leq x^n$ for given natural numbers $n,m$. Idempotence is equivalent to the conjunction of the two knotted inequalities $x\leq x^2$, often called the \emph{square increasing} property, and $x^2\leq x$, often called the \emph{square decreasing} property. More generally, a residuated lattice is called \emph{$\langle m,n\rangle$-periodic} if it satisfies both of the knotted inequalities $x^m\leq x^n$ and $x^n\leq x^m$, i.e., if it satisfies $x^m\approx x^n$. An \emph{$n$-potent} residuated lattice is an $\langle n+1,n\rangle$-periodic one, i.e., one that satisfies $x^{n+1}\approx x^n$. Note that if a residuated lattice $\m{A}$ is totally ordered and $\langle m,n \rangle$-periodic for $m>n$, then it is $n$-potent. For let $a \in A$. If $a \leq \ut$, then $a^m \leq a^{n+1} \leq a^n$; and if $\ut \leq a$, then $a^n \leq a^{n+1} \leq a^m$. Hence, since $\m{A}$ is totally ordered, $a^n = a^{n+1}$. So for semilinear varieties the notion of $\langle m,n\rangle$-periodicity reduces to the notion of $n$-potence.

Residuated lattices satisfying the knotted inequality $x\leq x^0 = \ut$ are usually called \emph{integral}. On the other hand, a residuated lattice that satisfies a knotted inequality $\ut\leq x^n$ for $n\neq 0$ is necessarily the one-element algebra. 

The following lemma is sometimes quite handy for showing the failure of \prp{AP} in concrete cases. Its proof is elementary and we omit it.

\begin{lemma}\label{lem:handy}
Let $\m{A}$ be a residuated lattice and suppose that $x,y,f \in A$ are such that $x\under f = x$ and $y\under f = y$. Then $x = y$, or $x$ and $y$ are incomparable. In particular, if $\m{A}$ is totally ordered, then for each $f\in A$, the map $x \mapsto x \under f$ has at most one fixed point.
\end{lemma}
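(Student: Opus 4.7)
The plan is to exploit the antitonicity of the operation $x \mapsto x\under f$ in its first argument, which follows directly from the residuation law $y \leq x\under z \iff xy \leq z$. This is essentially the only ingredient needed.

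First I would recall (or quickly verify) that in any residuated lattice, if $x \leq x'$ then $x'\under f \leq x\under f$: setting $u = x'\under f$, we have $x'u \leq f$, hence $xu \leq x'u \leq f$, so $u \leq x\under f$. With this in hand, the proof of the main statement proceeds by contrapositive. Assume $x$ and $y$ are comparable; without loss of generality, $x \leq y$. Applying the antitonicity of $\under$ in the first argument yields $y\under f \leq x\under f$. Invoking the two fixed-point hypotheses $x\under f = x$ and $y\under f = y$, this becomes $y \leq x$, so antisymmetry of the order gives $x = y$. Hence any two fixed points of $z \mapsto z\under f$ that are comparable must in fact coincide, which is precisely the dichotomy in the statement.

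The ``in particular'' clause is then an immediate specialization: in a totally ordered residuated lattice, any two elements are comparable, so the dichotomy forces uniqueness of the fixed point.

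There is no serious obstacle here; the only point requiring a moment's care is getting the direction of monotonicity right for $\under$, since the convention on the residuals can vary between sources. Once antitonicity in the first argument is pinned down, the argument is a one-line application of residuation followed by antisymmetry.
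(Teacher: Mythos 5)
Your proof is correct: antitonicity of $x\mapsto x\under f$ in the first argument (which does follow from residuation together with the monotonicity of multiplication) plus antisymmetry gives exactly the stated dichotomy, and the totally ordered case is the immediate specialization. The paper omits the proof as elementary, and this is evidently the intended argument.
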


\begin{theorem}\label{thm:knotted}
Let $k\geq 2$, and let $m\geq 1$, $n\geq 0$ with $m\neq n$. Suppose $E$ is any subset of the set of equations $\{xy\approx yx, x^k\approx x^{k+1}, x^m\leq x^n\}$. Then the subvariety $\V$ of $\SemRL$ defined by $E$ does not have the \prp{AP}. The same remains true for the varieties obtained by expanding $\V$ by bounds and/or a negation constant satisfying any of cyclicity, left- or right-involutivity, or involutivity. In particular, $\SemRL$ and $\CSemRL$ do not have the \prp{AP}.
\end{theorem}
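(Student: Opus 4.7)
The plan is to apply Lemma~\ref{l:doubly-essential}: to show that each of the candidate varieties $\V$ fails the amalgamation property, it suffices to exhibit an essential span $\langle \varphi_1\colon\m{A}\to\m{B},\varphi_2\colon\m{A}\to\m{C}\rangle$ in $\V$ with $\m{B}\in\Vsi$ that admits no amalgam in any totally ordered residuated lattice. Since each variety in the statement contains the common subvariety $\V_{\mathrm{max}}\subseteq\SemRL$ defined simultaneously by $xy\approx yx$, $x^k\approx x^{k+1}$, and $x^m\leq x^n$, I construct $\m{A},\m{B},\m{C}$ inside $\V_{\mathrm{max}}$; the obstruction being phrased purely in terms of totally ordered residuated lattices, it applies uniformly to $\Vsi$ for every such $\V$, and since finite totally ordered residuated lattices are subdirectly irreducible, $\m{B}\in\Vsi$ holds automatically.

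The obstruction rests on Lemma~\ref{lem:handy}: in any totally ordered residuated lattice, the map $x\mapsto x\under f$ has at most one fixed point. I choose $\m{A}$ to contain distinguished elements $f$ and $a$, and construct $\m{B}\supseteq\m{A}$ by adjoining a new element $b$ that is a fixed point of $x\mapsto x\under f$ and sits above $a$ in the extended chain, and $\m{C}\supseteq\m{A}$ by adjoining an analogous fixed point $c$ that sits below $a$. In any totally ordered amalgam $\m{D}$ of the span, both $\psi_1(b)$ and $\psi_2(c)$ would be fixed points of $y\mapsto y\under\bar f$, with $\bar f$ the common image of $f$, and hence would coincide by Lemma~\ref{lem:handy}; on the other hand the placement forces $\psi_1(b)>\bar a>\psi_2(c)$ in $\m{D}$, a contradiction. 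Essentiality of $\varphi_2$ is obtained by choosing $\m{C}$ to be simple, which can be arranged for the small finite chains under consideration.

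The remaining work is to realize this template uniformly. The main obstacle is the interaction with the knotted inequality $x^m\leq x^n$: when $m<n$ this inequality constrains below-$\ut$ elements (forcing, e.g., idempotence when $m=1$ and $n=2$), so the fixed-point configuration has to be adapted by placing $f,a,b,c$ above $\ut$, yielding a mirror version of the default arrangement that still produces the Lemma~\ref{lem:handy} clash. Case analysis on the parameters $(k,m,n)$ is therefore necessary, though it is routine bookkeeping that does not require any new conceptual tool. For the signature expansions, bounds are attached by appending top and bottom elements to each chain, and a negation constant satisfying any combination of cyclicity, left- or right-involutivity, or involutivity is chosen among the elements of the chain; none of these additions disturbs the fixed-point equation in $\m{B}$ and $\m{C}$ nor the Lemma~\ref{lem:handy} obstruction in any totally ordered amalgam, so the failure of the \prp{AP} propagates to every listed expansion.
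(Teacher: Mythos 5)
Your overall framework---reduce to totally ordered amalgams via Lemma~\ref{l:doubly-essential} and then invoke the fixed-point Lemma~\ref{lem:handy}---is exactly the paper's, but the contradiction you build on top of it cannot be realized. You propose to adjoin a fixed point $b$ of $x\mapsto x\under f$ \emph{above} $a$ in $\m{B}$ and a fixed point $c$ \emph{below} $a$ in $\m{C}$, with $a,f\in A$, and then derive $\psi_1(b)>\psi_2(c)$ against $\psi_1(b)=\psi_2(c)$. But the side of $a$ on which any fixed point can lie is already determined inside $\m{A}$: since $x\mapsto x\under f$ is order-reversing, a fixed point $d>a$ forces $a\under f\geq d>a$, while a fixed point $d<a$ forces $a\under f\leq d<a$; and $a\under f$ is a single element of $A$, computed in the common subalgebra. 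So $b$ and $c$ necessarily sit on the same side of every element of $A$, no order clash is available, and your template algebras $\m{B}$, $\m{C}$ simply do not exist. This is why the paper's proof, after identifying the two fixed points via Lemma~\ref{lem:handy}, derives the contradiction from an \emph{algebraic} incompatibility instead: in the first span the fixed point $a$ of $\m{B}_1$ is idempotent while the fixed point $b$ of $\m{C}_1$ satisfies $b^2=\zr$, so identification collapses $\psi_1(a)$ with $\psi_1(\zr)$; in the second span multiplication by the common element $a$ acts as the identity on one fixed point but moves the other, collapsing $\psi_2(y)$ with $\psi_2(z)$.

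A second, independent shortfall: you defer the entire construction to ``routine bookkeeping,'' but exhibiting concrete finite chains that are genuinely residuated lattices satisfying the relevant combinations of commutativity, $k$-potence, the knotted inequality, and (cyclic/left/right) involutivity is the substantive content of the theorem. In particular a single span does not suffice: the paper's first family of algebras handles $k\geq 3$ and most knotted inequalities, but the cases $k=2$ and $1=n<m$ require a second, integral family with a different collapsing mechanism. I would recommend replacing the order-based obstruction with an algebraic one along the paper's lines and writing out the two spans explicitly.
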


\begin{proof}
We consider two spans, showing that neither of them has any amalgam among semilinear residuated lattices.

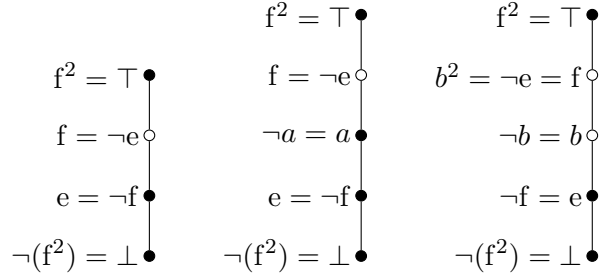
\begin{figure}[t]
\centering
\begin{tikzpicture}[
place/.style={circle,draw=black,fill=black, minimum size = 4pt, inner sep = 0pt},
square/.style={regular polygon,regular polygon sides=4},
place2/.style={square,draw=black,fill=black, minimum size = 5.5pt, inner sep = 0pt},
place3/.style={square,draw=black, minimum size = 5.5pt, inner sep = 0pt},
place4/.style={circle,draw=black, minimum size = 4pt, inner sep = 0pt}]
   \node[place] (top) at (0,0) {};
   \node[place4] (f) at (0,-0.8) {};
  \node[place] (t) at (0,-1.6) {};
   \node[place] (bot) at (0,-2.4) {};
   
  \node[left] () at (top) {$\zr^2=\top$};
   \node[left] () at (f) {$\zr=\neg\ut$};
  \node[left] () at (t) {$\ut=\neg\zr$};
  \node[left] () at (bot) {$\neg (\zr^2) = \bot$};

  \draw (bot) -- (t) -- (f) -- (top);
\end{tikzpicture}
\hspace{0.2 in}
\begin{tikzpicture}[
place/.style={circle,draw=black,fill=black, minimum size = 4pt, inner sep = 0pt},
square/.style={regular polygon,regular polygon sides=4},
place2/.style={square,draw=black,fill=black, minimum size = 5.5pt, inner sep = 0pt},
place3/.style={square,draw=black, minimum size = 5.5pt, inner sep = 0pt},
place4/.style={circle,draw=black, minimum size = 4pt, inner sep = 0pt}]
   \node[place] (top) at (0,0) {};
   \node[place4] (f) at (0,-0.8) {};
  \node[place] (a) at (-0,-1.6) {};
  \node[place] (t) at (0,-2.4) {};
   \node[place] (bot) at (0,-3.2) {};
   
  \node[left] () at (top) {$\zr^2=\top$};
   \node[left] () at (f) {$\zr=\neg\ut$};
      \node[left] () at (a) {$\neg a = a$};
  \node[left] () at (t) {$\ut=\neg\zr$};
  \node[left] () at (bot) {$\neg(\zr^2) = \bot$};

  \draw (bot) -- (t) -- (a) -- (f) -- (top);
\end{tikzpicture}
\hspace{0.2 in}
\begin{tikzpicture}[
place/.style={circle,draw=black,fill=black, minimum size = 4pt, inner sep = 0pt},
square/.style={regular polygon,regular polygon sides=4},
place2/.style={square,draw=black,fill=black, minimum size = 5.5pt, inner sep = 0pt},
place3/.style={square,draw=black, minimum size = 5.5pt, inner sep = 0pt},
place4/.style={circle,draw=black, minimum size = 4pt, inner sep = 0pt}]
   \node[place] (top) at (0,0) {};
   \node[place4] (f) at (0,-0.8) {};
  \node[place4] (b) at (-0,-1.6) {};
  \node[place] (t) at (0,-2.4) {};
   \node[place] (bot) at (0,-3.2) {};
   
  \node[left] () at (top) {$\zr^2=\top$};
   \node[left] () at (f) {$b^2=\neg\ut=\zr$};
      \node[left] () at (b) {$\neg b = b$};
  \node[left] () at (t) {$\neg\zr=\ut$};
  \node[left] () at (bot) {$\neg(\zr^2)= \bot$};

  \draw (bot) -- (t) -- (b) -- (f) -- (top);
\end{tikzpicture}

\caption{Labeled Hasse diagrams for the commutative, involutive, square-increasing residuated lattices $\m{A}_1$ (left), $\m{B}_1$ (middle), and $\m{C}_1$ (right) discussed in the proof of Theorem~\ref{thm:knotted}. Note that $a$ and $b$ are both fixed points under the negation $x\mapsto x\to \zr$}
\label{fig:SDMM}
\end{figure}

First, consider Figure~\ref{fig:SDMM}. The information depicted in the labeled Hasse diagrams therein is sufficient to uniquely determine the three commutative, square-increasing residuated lattices $\m{A}_1$, $\m{B}_1$, and $\m{C}_1$, where $\zr$ may be taken as an involutive negation constant in each case. It is easy to see that these algebras are $k$-potent for all $k\geq 3$ and that they satisfy every knotted inequality $x^m\leq x^n$ for $2\leq n \leq m$ or $1\leq m \leq n$. Being commutative and involutive, they are trivially cyclic and left- and right-involutive.

The inclusion of $\m{A}_1$ into $\m{B}_1$ and $\m{C}_1$ determines an essential span $\alg{\m{A}_1\hookrightarrow\m{B}_1,\m{A}_1\hookrightarrow\m{C}_1}$, and we claim that this essential span has no semilinear amalgam. To see this, by Lemma~\ref{l:doubly-essential}, it is enough to show that it does not have an amalgam in the class of totally ordered residuated lattices. Suppose for a contradiction that $\langle \psi_1 \colon \m{B}_1 \to \m{D}, \psi_2 \colon \m{C}_1 \to \m{D} \rangle$ is an amalgam such that $\m{D}$ is totally ordered. Note that both $a$ and $b$ are fixed points of the map $x\mapsto x\to\zr$ in the respective algebra, so $\psi_1(a)=\psi_2(b)$ by Lemma~\ref{lem:handy}. However, this implies $\psi_1(a) = \psi_1(a)^2 = \psi_2(b)^2 = \psi_2(\zr)=\psi_1(\zr)$, contradicting the fact that $\psi_1$ is an injection.

\begin{figure}[t]
\centering
\begin{tikzpicture}[
place/.style={circle,draw=black,fill=black, minimum size = 4pt, inner sep = 0pt},
square/.style={regular polygon,regular polygon sides=4},
place2/.style={square,draw=black,fill=black, minimum size = 5.5pt, inner sep = 0pt},
place3/.style={square,draw=black, minimum size = 5.5pt, inner sep = 0pt},
place4/.style={circle,draw=black, minimum size = 4pt, inner sep = 0pt}]
   \node[place] (t) at (0,0) {};
   \node[place] (a) at (0,-0.8) {};
  \node[place] (b) at (0,-1.6) {};
     \node[place4] (b') at (0,-2.4) {};
   \node[place4] (a') at (0,-3.2) {};
  \node[place] (t') at (0,-4) {};
  
  \node[left] () at (t) {$\ut$};
    \node[left] () at (a) {$a$};
  \node[left] () at (b) {$ab = b$};
    \node[left] () at (b') {$\neg b$};
    \node[left] () at (a') {$\neg a$};
  \node[left] () at (t') {$\zr = \neg\ut$};

  \draw (t') -- (a') -- (b') -- (b) -- (a) -- (t);
\end{tikzpicture}
\hspace{0.2 in}
\begin{tikzpicture}[
place/.style={circle,draw=black,fill=black, minimum size = 4pt, inner sep = 0pt},
square/.style={regular polygon,regular polygon sides=4},
place2/.style={square,draw=black,fill=black, minimum size = 5.5pt, inner sep = 0pt},
place3/.style={square,draw=black, minimum size = 5.5pt, inner sep = 0pt},
place4/.style={circle,draw=black, minimum size = 4pt, inner sep = 0pt}]
   \node[place] (t) at (0,0) {};
   \node[place] (a) at (0,-0.8) {};
  \node[place4] (x) at (-0,-1.6) {};
  \node[place] (b) at (0,-2.4) {};
     \node[place4] (b') at (0,-3.2) {};
   \node[place4] (x') at (0,-4) {};
  \node[place4] (a') at (-0,-4.8) {};
  \node[place] (t') at (0,-5.6) {};
   
  \node[left] () at (t) {$\ut$};
   \node[left] () at (a) {$a$};
      \node[left] () at (x) {$ax=x$};
  \node[left] () at (b) {$x^2=b$};
    \node[left] () at (b') {$\neg b$};
   \node[left] () at (x') {$\neg x$};
      \node[left] () at (a') {$\neg a$};
  \node[left] () at (t') {$\zr = \neg\ut$};

  \draw (t') -- (a') -- (x') -- (b') -- (b) -- (x) -- (a) -- (t);
\end{tikzpicture}
\hspace{0.2 in}
\begin{tikzpicture}[
place/.style={circle,draw=black,fill=black, minimum size = 4pt, inner sep = 0pt},
square/.style={regular polygon,regular polygon sides=4},
place2/.style={square,draw=black,fill=black, minimum size = 5.5pt, inner sep = 0pt},
place3/.style={square,draw=black, minimum size = 5.5pt, inner sep = 0pt},
place4/.style={circle,draw=black, minimum size = 4pt, inner sep = 0pt}]
   \node[place] (t) at (0,0) {};
   \node[place] (a) at (0,-0.8) {};
     \node[place4] (y) at (-0,-1.6) {};
        \node[place4] (z) at (-0,-2.4) {};
  \node[place] (b) at (0,-3.2) {};
   \node[place4] (b') at (0,-4) {};
   \node[place4] (z') at (0,-4.8) {};
     \node[place4] (y') at (-0,-5.6) {};
        \node[place4] (a') at (-0,-6.4) {};
  \node[place] (t') at (0,-7.2) {};
   
  \node[left] () at (t) {$\ut$};
   \node[left] () at (a) {$a$};
   \node[left] () at (y) {$y$};
   \node[left] () at (z) {$az=ay=z$};
  \node[left] () at (b) {$y^2=z^2 = b$};
    \node[left] () at (b') {$\neg b$};
   \node[left] () at (z') {$\neg z$};
   \node[left] () at (y') {$\neg y$};
   \node[left] () at (a') {$\neg a$};
  \node[left] () at (t') {$\zr =\neg \ut$};

  \draw (t') -- (a') -- (y') -- (z') -- (b') -- (b) -- (z) -- (y) -- (a) -- (t);
\end{tikzpicture}
\caption{Labeled Hasse diagrams for the algebras $\m{A}_2$ (left), $\m{B}_2$ (middle), and $\m{C}_2$ (right) discussed in the proof of Theorem~\ref{thm:knotted}. Note that the interval $[b,e]$ forms a residuated lattice in each of the depicted algebras and  multiplication between the remaining elements is defined according to the following rules: if $c,d \in [b,e]$, then $c(\neg d) = \neg (c \to d)$ and $(\neg c)(\neg d) = \zr$.} 
\label{fig:MTL}
\end{figure}
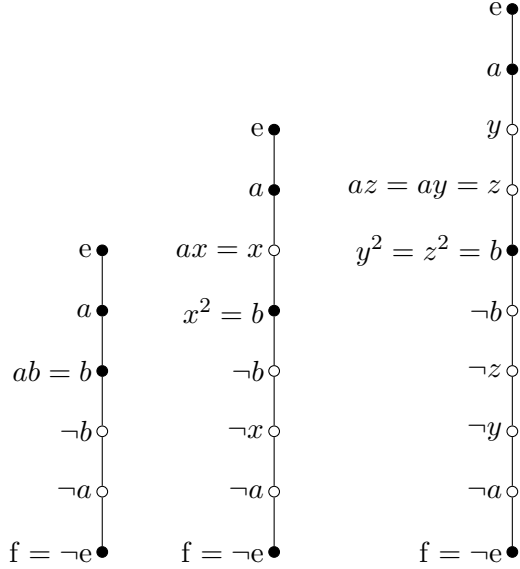

This proves the theorem for all cases except where $k=2$ or $1=n<  m$. For the remaining cases, we refer to Figure~\ref{fig:MTL}.\footnote{A similar example, showing the failure of \prp{AP} in the variety of MTL-algebras and a number of other varieties, was recently presented by V.~Giustarini and S.~Ugolini at a seminar at Chapman University.} The labeled Hasse diagrams therein uniquely determine the three commutative, integral, involutive residuated lattices $\m{A}_2$, $\m{B}_2$, and $\m{C}_2$. We claim once again that the essential span defined by $\langle \m{A}_2\hookrightarrow\m{B}_2,\m{A}_2\hookrightarrow\m{C}_2\rangle$ has no amalgam among semilinear residuated lattices. To see this, by Lemma~\ref{l:doubly-essential}, it is enough to show that it does not have an amalgam in the class of totally ordered residuated lattices.

Toward a contradiction, suppose that $\langle \psi_1\colon \m{B}_2 \to \m{D}, \psi_2\colon \m{C}_2 \to \m{D} \rangle$ is an amalgam of the span with $\m{D}$ a totally ordered residuated lattices. Observe that in $\m{B}_2$ we have that $x\to b = x$, and in $\m{C}_2$ we have $y\to b= y$. Hence, in $\m{D}$ each of $\psi_1(x)$ and $\psi_2(y)$ are fixed points of the map $p\mapsto p\to \psi_1(b) = p\to \psi_2(b)$. It follows from Lemma~\ref{lem:handy} that $\psi_1(x)=\psi_2(y)$. But then $\psi_2(z) = \psi_2(a)\psi_2(y)=\psi_1(a)\psi_1(x)=\psi_1(x)=\psi_2(y)$, contradicting the fact that $\psi_2$ is an injection.\end{proof}

Theorem~\ref{thm:knotted} is quite expansive, and it is worth pausing to consider its consequences. Firstly, Theorem~\ref{thm:knotted} shows the failure of the \prp{AP} in the variety of semilinear residuated lattices, the variety of commutative semilinear residuated lattices, the variety of integral semilinear residuated lattices, and the variety of commutative integral semilinear residuated lattices, as well as their expansions by bounds and/or involution. This solves several of the open problems listed in \cite[p.~205]{MPT23}. Note that the failure of the \prp{AP} for the variety of semilinear residuated lattices was already shown in  \cite{GLT2015} (see also Section~\ref{sec:cancellative}).

Secondly, we may contrast Theorem~\ref{thm:knotted} against the results we obtained in the idempotent case in Section~\ref{sec:idempotent}. We have already seen that the \prp{AP} holds for the variety of idempotent commutative semilinear residuated lattices, idempotent integral semilinear residuated lattices (Brouwerian algebras), and idempotent involutive commutative semilinear residuated lattices (Sugihara monoids). Replacing idempotence by any of the obvious weaker conditions---viz. knotted inequalities, $n$-potence, or, more generally, $\langle m,n\rangle$-periodicity---gives a variety for which the \prp{AP} fails in any of these cases.

Thirdly, Theorem~\ref{thm:knotted} contributes to the study of amalgamation in varieties of \emph{De Morgan monoids}. The latter are commutative, distributive, involutive, square-increasing residuated lattices, and have gained prominence because of their connection to relevance logic; see, e.g., \cite{MRW2020}. Notably, amalgamation is known to fail for the variety of De Morgan monoids as well a great number of adjacent varieties; see \cite{Urq1993}. Theorem~\ref{thm:knotted} shows that the \prp{AP} also fails for the variety of semilinear De Morgan monoids $\pc{SDMM}$, which has recently been studied in \cite{WR2024}. However, despite the rather stark failure of the \prp{AP} in the vicinity of De Morgan monoids, the next proposition shows that $\amal(\pc{SDMM})$ is infinite.

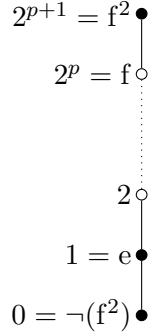
\begin{figure}
\centering
\begin{tikzpicture}[
place/.style={circle,draw=black,fill=black, minimum size = 4pt, inner sep = 0pt},
square/.style={regular polygon,regular polygon sides=4},
place2/.style={square,draw=black,fill=black, minimum size = 5.5pt, inner sep = 0pt},
place3/.style={square,draw=black, minimum size = 5.5pt, inner sep = 0pt},
place4/.style={circle,draw=black, minimum size = 4pt, inner sep = 0pt}]
   \node[place] (top) at (0,0) {};
   \node[place4] (f) at (0,-0.8) {};
   \node[place4] (m) at (0,-2.4) {};
  \node[place] (t) at (0,-3.2) {};
   \node[place] (bot) at (0,-4) {};
   
  \node[left] () at (top) {$2^{p+1}=\zr^2$};
   \node[left] () at (f) {$2^p=\zr$};
   \node[left] () at (m) {$2$};   
  \node[left] () at (t) {$1=\ut$};
  \node[left] () at (bot) {$0= \neg (\zr^2)$};

  \draw[dotted] (m) -- (f);
  \draw (bot) -- (t) -- (m);
  \draw (f) -- (top);
\end{tikzpicture}
\caption{Labeled Hasse diagrams for the De Morgan monoid $\m{M}_p$.}
\label{fig:DMM2}
\end{figure}

\begin{proposition}
There are infinitely many varieties of semilinear De Morgan monoids with the \prp{AP}.
\end{proposition}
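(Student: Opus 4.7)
The plan is to apply Proposition~\ref{prop:simple chain} (in its involutive version) to the family $(\m{M}_p)_{p\geq 1}$ of finite totally ordered De Morgan monoids depicted in Figure~\ref{fig:DMM2}. I will show that each $\vr(\m{M}_p)$ (for $p$ prime, say) has the amalgamation property and that these varieties are pairwise distinct, which will give infinitely many examples.

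First I will verify that each $\m{M}_p$ is a simple totally ordered De Morgan monoid. Multiplication on $\m{M}_p$ is forced to be $2^k\cdot 2^\ell = 2^{\min(k+\ell,\,p+1)}$ with $0$ absorbent, and the involution is the unique order-reversing bijection, acting as $\neg(2^k) = 2^{p-k}$ for $0\leq k\leq p$, $\neg 0 = 2^{p+1}$, and $\neg 2^{p+1}=0$. The De Morgan monoid axioms (commutativity, distributivity, involutivity, square-increasingness) are routine to verify. For simplicity, I will observe that any non-trivial convex subalgebra of the residuated lattice reduct containing $\ut=1$ must contain some $2^k\neq 1$ or $0$, and a brief direct computation with the operations and convexity then forces it to coincide with the whole algebra. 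Commutativity supplies the congruence extension property.

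Next, I will classify the subalgebras of $\m{M}_p$ in the De Morgan monoid signature when $p$ is prime. Every subalgebra contains the constants $\ut$ and $\zr$, and hence also $\top = \zr^2$ and $\bot = \neg\top$, so it corresponds bijectively to a subset $E\subseteq\{0, 1, \ldots, p\}$ with $\{0, p\}\subseteq E$ closed under addition (with sums $>p$ escaping to $\top$) and under the involution $k\mapsto p-k$. If $E$ contains any $k$ with $0<k<p$, then since $p$ is prime, $\gcd(k,p)=1$, and closure under addition together with the involution readily forces $E=\{0,1,\ldots,p\}$. Hence the only proper subalgebra is $\{0,\ut,\zr,\top\}$, which is isomorphic to $\m{M}_1$. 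Since this has $4$ elements while $\m{M}_p$ has $p+3$, the two are non-isomorphic when $p\geq 2$, so $\m{M}_p$ does not contain two distinct isomorphic subalgebras.

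By Proposition~\ref{prop:simple chain}, $\vr(\m{M}_p)$ thus has the amalgamation property for every prime $p$. To conclude that these varieties are pairwise distinct, I will appeal to J\'onsson's lemma: since each $\m{M}_p$ is finite and simple, $(\vr(\m{M}_p))_{\mathrm{FSI}} = \iso\sub(\m{M}_p)$ consists, up to isomorphism, of $\m{M}_1$, $\m{M}_p$, and the trivial algebra. For distinct primes $p\neq q$, the algebra $\m{M}_q$ has $q+3>4$ elements, so it cannot appear in $\iso\sub(\m{M}_p)$, and hence $\vr(\m{M}_p)\neq\vr(\m{M}_q)$. The main step to watch is the subalgebra classification, but this reduces to the elementary observation that subsets of $\{0,\ldots,p\}$ containing $\{0,p\}$ closed under truncated addition and the involution $k\mapsto p-k$ are governed by $\gcd(k,p)$ for $k\in E$, which is trivial when $p$ is prime.
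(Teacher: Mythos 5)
Your proof is correct and takes essentially the same route as the paper: the same family $\m{M}_p$ of finite simple totally ordered De Morgan monoids and the same appeal to Proposition~\ref{prop:simple chain}. The only difference is that you supply direct verifications of the two facts the paper simply cites from \cite{MRW2020}, namely the subalgebra classification (via your $\gcd$/Euclidean argument on exponent sets) and the pairwise distinctness of the varieties $\vr(\m{M}_p)$ (via J\'onsson's Lemma and a cardinality count).
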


\begin{proof}
For each prime number $p$, we may define a totally ordered De Morgan monoid $\m{M}_p$ whose universe is $M_p=\{0,1,2,\ldots,2^p,2^{p+1}\}$ as depicted in Figure~\ref{fig:DMM2}. The product of $\m{M}_p$ is defined as the usual product of integers, but truncated at the maximum value $2^{p+1}$. It is shown in \cite{MRW2020} that $\vr(\m{M}_p)\neq\vr(\m{M}_q)$ for $p\neq q$, and that the only non-trivial subalgebra of $\m{M}_p$ is the one with universe $\{0,1,2^p,2^{p+1}\}$. In particular, $\m{M}_p$ is simple for each prime number $p$. Thus, by Proposition~\ref{prop:simple chain}, $\vr(\m{M}_p)$ has the amalgamation property for each prime number $p$.
\end{proof}

The previous proposition is still far from a classification. Although the structure of semilinear De Morgan monoids is rather complicated, we conjecture that a transparent and explicit description of $\amal(\pc{SDMM})$ is possible.

\begin{question}
Is it possible to completely describe $\amal(\pc{SDMM})$?
\end{question}

We saw in Section~\ref{sec:idempotent} that a crucial part in characterizing varieties with the \prp{AP} is having good structure theorems. However, beyond the idempotent case, such structure theorems are only in their beginnings. A notable structure theorem for odd or even involutive totally ordered commutative residuated lattices is obtained in \cite{Jenei2022} and applied in \cite{Jenei2023} to obtain some results about the \prp{AP} for certain classes of involutive commutative semilinear  residuated lattices.  We also want to mention the paper \cite{GalatosUgolini2023} where gluings of residuated lattices are considered and applied to show that a rather special variety of $2$-potent integral commutative semilinear residuated lattices fails the \prp{AP}.

\section{Cancellative Varieties and Their Cousins}\label{sec:cancellative}

We now have a rather sophisticated understanding of amalgamation in the vicinity of idempotent semilinear residuated lattices and their obvious generalization. On the other hand, multiplication exhibits rather the opposite behavior of idempotence in \emph{cancellative} residuated lattices, which are those that satisfy the quasiequations
\[
xy \approx xz \Rightarrow y \approx z \text{ and } yx \approx zx \Rightarrow y \approx z,
\]
or, equivalently, the equations
\[
x \under xy \approx y \text{ and } yx \ovr x \approx y.
\]
In any residuated lattices that is both idempotent and cancellative, we have $x\cdot x = x =x\cdot\ut$ and thus $x=\ut$. Hence, the only residuated lattice that is both cancellative and idempotent is the one-element algebra. Information about amalgamation in the vicinity of cancellative varieties therefore complements the theory we have developed in the idempotent case, giving us some sense of the picture as a whole.

Unfortunately, we will see that the \prp{AP} is much less well understood for varieties close to cancellative ones than it is for varieties close to idempotent ones. We will report here on what is known about the \prp{AP} for prominent classes of cancellative semilinear residuated lattices, as well as those constructed from them---notably Wajsberg hoops, MV-algebras, basic hoops, and BL-algebras. However, many difficult open questions remain.

In what follows, we denote the variety of cancellative semilinear residuated lattices by $\CanSRL$ and the subvariety consisting of commutative members of $\CanSRL$ by $\CCanSRL$.

\subsection{Lattice-ordered Groups}\label{sec:lgroups}

Much of what we know about cancellative residuated lattices descends from the extensive body of work on lattice-ordered groups, which will serve as our starting point. A \emph{lattice-ordered group} (or \emph{$\ell$-group} for short) is traditionally defined as an algebra $\m{A} = \alg{A,\meet,\join,\cdot,{}^{-1},\ut}$ such that 
\begin{itemize}
\item $\alg{A,\meet,\join}$ is a lattice;
\item $\alg{A,\cdot, {}^{-1}}$ is a group;
\item for all $a,b,c,d \in A$, 
\[
a(b \join c)d = abd \join acd.
\]
\end{itemize}
Clearly, the class of a $\ell$-groups forms a variety, which we denote by $\LG$. Note that $\ell$-groups are term equivalent to residuated lattices that satisfy $(\ut \ovr x)x \approx \ut$: In any $\ell$-group, we may define the requisite residual operations by $x\under y := x^{-1}y$ and $x\ovr y := xy^{-1}$; conversely, in any residuated lattice satisfying the aforementioned equation, $x^{-1} := \ut \ovr x$ defines a group inverse operation. In light of this term equivalence, we will toggle between these presentations of $\ell$-groups as convenient.

Thanks to their long history, $\ell$-groups come with terminology that diverges from that usually used for residuated lattices: Semilinear $\ell$-groups are traditionally called \emph{representable}, and commutative $\ell$-groups are traditionally called \emph{abelian}. We will denote the variety of representable $\ell$-groups by $\RLG$ and the variety of abelian $\ell$-groups by $\ALG$. The variety $\ALG$ is the unique atom in the subvariety lattice $\slat(\LG)$ (see, e.g., \cite{KM94}). In particular, every abelian $\ell$-group is semilinear.

The variety of abelian $\ell$-groups is well-known to have the \prp{AP}. The first proof of this fact is due to Pierce in \cite{Pierce1972b}. An alternative proof by Pierce using the Hahn embedding theorem (see \cite{Hahn1907}) can be found in \cite{Pierce1972a}. Two algebraic proofs of Powell and Tsinakis can be found in \cite{PowellTsinakis1983,PowellTsinakis1989b}, and a model-theoretic proof via quantifier elimination, 
due to Weispfenning, can be found in \cite{Weispfenning1989}. The recent proof by Metcalfe, Montagna, and Tsinakis in \cite{Metcalfe2014} proceeds by showing that $\ALG$ has the equational deductive interpolation property, which, for varieties of commutative residuated lattices, entails the \prp{AP}.

Lexicographic products, which are central to the Hahn embedding theorem, play a role in Pierce's proof in \cite{Pierce1972a} that is analogous the role nested sums played in Section~\ref{sec:general}; ordinal sums will play much the same part in Section~\ref{sec:BL}. However, in the following proof that $\ALG$ has the \prp{AP}, we opt to follow the model-theoretic treatment of \cite{Weispfenning1989}, modulo a slight change in the language. In our opinion, it is the shortest and least complicated of the available proofs, and also affords the opportunity to illustrate model-theoretic techniques.

For our discussion of abelian $\ell$-groups, it is convenient to use additive notation rather than multiplicative notation. Consequently, we will also write $x^n$ additively as $nx$ for $n\in \N$.
 Recall that an abelian group $\m{G} = \alg{G, +, -, 0}$ is called \emph{divisible} if for each $a \in G$ and $n\geq 1$, there exists $b\in G$ such that $nb = a$.

\begin{proposition}\label{p:divisible-QE}
The class of non-trivial totally ordered divisible abelian groups (considered in the language $\{<,+,-,0\}$) has quantifier elimination. 
\end{proposition}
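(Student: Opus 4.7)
The plan is to apply the standard model-theoretic criterion for quantifier elimination: it suffices to show that every formula of the form $\exists x \, \phi(x,\bar{y})$, with $\phi$ a conjunction of literals in the language $\{<,+,-,0\}$, is equivalent modulo $T$ to a quantifier-free formula, where $T$ denotes the theory of non-trivial totally ordered divisible abelian groups. The reduction of general formulas to this case is by taking prenex forms, pushing negations through quantifiers, and distributing the quantifier over disjunctions.

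First, every term in this language is a $\Z$-linear combination of variables, so each literal in $\phi$ involving $x$ can be rewritten in the form $nx \mathrel{\Box} t(\bar{y})$ with $n\in \Z$, $t(\bar{y})$ a term in $\bar{y}$, and ${\Box} \in \{{=},{\neq},{<},{\leq},{>},{\geq}\}$. Let $N$ be the least common multiple of the nonzero coefficients of $x$ appearing in these literals. In any model of $T$, divisibility furnishes for every element $z$ a (unique, by torsion-freeness) element $x$ with $Nx = z$, so after clearing denominators and introducing the variable $z = Nx$, the formula $\exists x \, \phi(x,\bar{y})$ is equivalent modulo $T$ to an existential $\exists z \, \phi'(z,\bar{y})$ in which every literal involving $z$ has the form $z \mathrel{\Box} s(\bar{y})$ for some term $s$. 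Literals in $\phi'$ not involving $z$ can be pulled outside the existential.

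I then split on whether $\phi'$ contains a literal of the form $z = s_i(\bar{y})$. If so, substituting $s_i(\bar{y})$ for $z$ produces a quantifier-free formula equivalent to $\exists z \, \phi'(z, \bar{y})$. Otherwise, the literals involving $z$ form a conjunction of the shape
\[
\bigwedge_{i} z \neq s_i(\bar{y}) \;\land\; \bigwedge_{j} z \mathrel{\triangleleft_j} u_j(\bar{y}) \;\land\; \bigwedge_{k} v_k(\bar{y}) \mathrel{\triangleleft_k'} z,
\]
with each $\triangleleft_j, \triangleleft_k' \in \{{<},{\leq}\}$. Such a conjunction is satisfiable in a model of $T$ if and only if each lower bound is appropriately below each upper bound, yielding a ``box constraint'' $\bigwedge_{j,k} v_k \mathrel{\triangleleft_{j,k}} u_j$ in which the strictness $\triangleleft_{j,k}$ is inferred from $\triangleleft_j$ and $\triangleleft_k'$. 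The forward direction is immediate; for the backward direction, the crucial fact is that in a non-trivial totally ordered divisible abelian group the order is dense and without endpoints, so any non-empty open interval is infinite and hence contains an element avoiding the finitely many forbidden values $s_i(\bar{y})$.

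The main obstacle is the careful bookkeeping in executing this reduction---in particular, tracking the interaction between strict and non-strict inequalities when forming the box constraint, and handling degenerate cases where the interval collapses to a single point forced by some $v_k = u_j$ (in which case the disequalities $z \neq s_i$ must be checked directly in terms of the relevant bound terms). The substantive model-theoretic content lies entirely in the interval-realisation step, which rests on density of the order together with non-triviality; everything else is a routine execution of the standard test for quantifier elimination.
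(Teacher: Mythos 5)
Your proof is correct and follows essentially the same route as the paper's sketch (which itself follows Weispfenning): reduce to primitive existential formulas, use divisibility to normalise the coefficient of the quantified variable, substitute when an equation is present, and otherwise eliminate the quantifier by comparing lower and upper bounds. The only cosmetic difference is that the paper first removes negated literals (rewriting them as disjunctions of positive ones) and then witnesses the existential explicitly by the midpoints $u_k + v_l$ via the $2y$ trick, whereas you retain $\neq$ and $\leq$ literals and invoke density and unboundedness of the order to realise a point of the resulting interval avoiding finitely many forbidden values---two phrasings of the same underlying argument.
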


\begin{proof}[Proof sketch]
We follow the strategy of the proof in \cite{Weispfenning1989}.
Let $\mathsf{D}$ be the class of non-trivial divisible totally ordered abelian groups.
To prove quantifier elimination for $\mathsf{D}$ it suffices to show that every formula of the form $(\exists y)\psi(y,x_1,\dots,x_n)$ is equivalent in  $\mathsf{D}$ to a quantifier-free formula, where
\begin{align*}
\psi(y,x_1,\dots, x_n) &=  \bigcurlywedge_{i\in I}  ((\sum_{j=1}^n \lambda_{i,j} x_j)  \approx y) \curlywedge \bigcurlywedge_{k\in K} ((\sum_{j=1}^n \lambda'_{k,j} x_j) < y) \\
& \quad \curlywedge  \bigcurlywedge_{l\in L} ( y < (\sum_{j=1}^n \lambda''_{l,j} x_j))   \curlywedge \chi(x_1,\dots, x_n).
\end{align*}
for $\lambda_{i,j}, \lambda'_{k,j} \in \Z$ and $I,K,L$ finite sets.

If  $I\neq \emptyset$, then for $i\in I$, we define $t = \sum_{j=1}^n \lambda_{i,j} x_j$ and we get
\[
\mathsf{D} \models (\forall \overline{x})[(\exists y)\psi(y,\overline{x}) \Leftrightarrow \psi(t, \overline{x})],
\]
If $I = K =  \emptyset$ or $I = L = \emptyset$, then 
\[
\mathsf{D} \models  (\forall \overline{x})[(\exists y)\psi(y,\overline{x}) \Leftrightarrow \chi(\overline{x})]
\]
Finally, if $I = \emptyset$, $K\neq \emptyset$, and $L \neq \emptyset$, then we define for $k\in K$, $u_k = \sum_{j=1}^n \lambda'_{k,j} x_j$ and for $l\in L$, $v_l = \sum_{j=1}^n \lambda''_{l,j} x_j$, and get 
\[
\mathsf{D} \models (\forall \overline{x})[ (\exists y)\psi(2y,\overline{x}) \Leftrightarrow \bigcurlyvee_{k\in K,l\in L} \psi(u_k + v_l,\overline{x})],  
\]
noting that also
\[
\mathsf{D} \models (\forall \overline{x})[(\exists y)\psi(y,\overline{x}) \Leftrightarrow  (\exists y)\psi(2y, \overline{x})].\qedhere
\]
\end{proof}

\begin{corollary}[{\cite[Corollary 2.2]{Pierce1972b}}]
The class of totally ordered abelian groups has the amalgamation property. 
\end{corollary}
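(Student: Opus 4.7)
The plan is to derive the corollary by combining Proposition~\ref{p:divisible-QE} with Theorem~\ref{t:QE-AP}. Let $\mathsf{D}$ denote the class of non-trivial divisible totally ordered abelian groups in the signature $\{<,+,-,0\}$. This class is elementary, since it is axiomatized by the (finitely many) axioms of non-trivial totally ordered abelian groups together with the divisibility schema $\{(\forall x)(\exists y)\, ny \approx x : n \geq 1\}$. Proposition~\ref{p:divisible-QE} says that $\mathsf{D}$ admits quantifier elimination, so Theorem~\ref{t:QE-AP} immediately yields that $\iso\sub(\mathsf{D})$ has the amalgamation property.

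It then remains to show that every totally ordered abelian group belongs to $\iso\sub(\mathsf{D})$. The trivial case embeds into any member of $\mathsf{D}$, so I may focus on a non-trivial totally ordered abelian group $\m{G}$. Any such $\m{G}$ is torsion-free, since $0 < x$ implies $0 < x < 2x < \cdots < nx$ and hence $nx \neq 0$ for $n \geq 1$. I would then form the classical divisible hull $\m{G} \otimes_{\mathbb{Z}} \mathbb{Q}$ and extend the order by declaring $a \otimes (1/n)$ positive iff $a > 0$ in $\m{G}$ (for $n \geq 1$). A routine verification shows that this prescription is well defined, total, and translation-invariant, making $\m{G} \otimes_{\mathbb{Z}} \mathbb{Q}$ into a non-trivial divisible totally ordered abelian group into which $\m{G}$ embeds via $a \mapsto a \otimes 1$.

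With both steps in place, $\iso\sub(\mathsf{D})$ coincides with the class of all totally ordered abelian groups, and the corollary follows. The only piece of genuine work is the verification of the extended order on the divisible hull; this is classical but should be handled with some care, as one has to check simultaneously that positivity is independent of the chosen representative $(a, 1/n)$ of an element, that any two elements are comparable, and that positivity is preserved by addition. Everything else is an immediate application of the general framework assembled in Section~\ref{sec:tools} together with Proposition~\ref{p:divisible-QE}.
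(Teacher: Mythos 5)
Your proposal is correct and follows essentially the same route as the paper: apply Theorem~\ref{t:QE-AP} to Proposition~\ref{p:divisible-QE} and observe that every totally ordered abelian group embeds into its divisible hull, suitably ordered (the paper states this in one line where you supply the routine verification). The only detail you omit is the paper's closing remark that the \prp{AP} is insensitive to whether one works in the signature with $<$ or with $\meet$ and $\join$, which is needed to pass from the order-theoretic language of the quantifier-elimination result back to the algebraic one.
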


\begin{proof}
Note that if  $\mathsf{D}$ is the class of non-trivial divisible totally ordered abelian groups, then $\iso\sub(\mathsf{D})$ is the class of totally ordered abelian groups, since every totally ordered abelian group embeds into a divisible one (just consider the divisible hull of its group reduct and order it in the obvious way). Thus, by Theorem~\ref{t:QE-AP} and Proposition~\ref{p:divisible-QE}, the class of totally ordered abelian groups has the \prp{AP}. Moreover, for the \prp{AP} it does not matter whether we consider the language with $<$ or the language with $\meet$ and $\join$.
\end{proof}

Now, it immediately follows from Corollary~\ref{c:ess-AP-comsem}  that the variety of abelian $\ell$-groups has has the \prp{AP}.

\begin{corollary}[{\cite[Theorem 2.3]{Pierce1972b}}]
$\ALG$ has the amalgamation property.
\end{corollary}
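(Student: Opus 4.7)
The plan is to derive this corollary immediately from the preceding one by invoking the general reduction machinery of Section~\ref{sec:tools}. Since every abelian $\ell$-group is representable (indeed, $\ALG$ is the unique atom in $\slat(\LG)$), viewing $\ell$-groups as residuated lattices via $x\under y := -x + y$ and $x\ovr y := x + (-y)$ places $\ALG$ inside $\CSemRL$. Hence Corollary~\ref{c:ess-AP-comsem} applies, reducing the \prp{AP} for $\ALG$ to the \prp{EAP} for $\chain{\ALG}$, the class of totally ordered abelian $\ell$-groups.

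The next step is to observe that $\chain{\ALG}$ is precisely the class of totally ordered abelian groups endowed with their natural lattice structure. By the previous corollary, this class already enjoys the full amalgamation property. Since every essential span is in particular a span, the \prp{AP} trivially implies the \prp{EAP}: an amalgam furnished for an arbitrary span works just as well when the span happens to be essential. Thus $\chain{\ALG}$ has the \prp{EAP}, and Corollary~\ref{c:ess-AP-comsem} (in the direction from (2) to (1)) delivers the \prp{AP} for $\ALG$.

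The real work has already been done in the preceding corollary, where quantifier elimination for non-trivial divisible totally ordered abelian groups was combined with Theorem~\ref{t:QE-AP} to yield the \prp{AP} for totally ordered abelian groups. No genuine obstacle appears at this stage; the present corollary is simply the lift of that chain-level result to the variety level using the reduction tailored for commutative semilinear varieties. The only minor point to verify is that $\ALG$ is indeed a subvariety of $\CSemRL$, which is handled by the standard fact that every abelian $\ell$-group is representable.
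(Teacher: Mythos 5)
Your proposal is correct and follows the same route as the paper: the paper likewise derives the result immediately from Corollary~\ref{c:ess-AP-comsem}, using that $\chain{\ALG}$ is the class of totally ordered abelian groups, whose \prp{AP} (hence \prp{EAP}) was established in the preceding corollary. The only difference is that you spell out the steps the paper leaves implicit.
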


As of today, $\ALG$ is the only non-trivial subvariety of $\LG$ that is known to have the \prp{AP}. Pierce showed in \cite{Pierce1972b} that $\LG$ itself does not have the \prp{AP}, and, much later, Gurchenkov showed in \cite{Gurchenkov1997} that any variety of $\ell$-groups with the \prp{AP} is representable, i.e. that $\amal(\LG)=\amal(\RLG)$. On the other hand, Glass, Saracino, and Wood showed in \cite{Glass1984} that $\RLG$ does not have the \prp{AP}, and that the \prp{AP} also fails for some subvarieties of $\RLG$. This result was strengthen by Powell and Tsinakis in \cite{PowellTsinakis1989a}, where they show that any variety of representable $\ell$-groups that contains one of the Medvedev varieties $\cls{M}^+$ or $\cls{M}^-$ (see, e.g., \cite{KM94} for a definition) does not have the \prp{AP}. Thus, any non-abelian members of $\amal(\LG)$ must satisfy some rather particular conditions. Whether such a variety exists is a long-standing mystery.

\begin{question}
Is there any non-abelian member of $\amal(\LG)=\amal(\RLG)$?
\end{question}

The proof of Powell and Tsinakis in \cite{PowellTsinakis1989b} (see also \cite{GLT2015}) that $\RLG$ does not have the \prp{AP} relies on the fact that representable $\ell$-groups have unique roots, i.e.,
the variety of representable $\ell$-groups satisfies the following quasiequation for each $n\in \N{\setminus}\{0\}$:
\[
x^n\approx y^n \Rightarrow x\approx y.
\]
To see this, let $\m{A}$ be a totally ordered group and $a,b\in A$. If $a<b$, then $a^2 < ab < b^2$ and, inductively, $a^n < b^n$, i.e., $\m{A} \models x^n\approx y^n \Rightarrow x\approx y$. Now the claim follows, since the variety of representable $\ell$-groups is generated as a quasivariety by its totally ordered members.

To show the failure of the \prp{AP}, Powell and Tsinakis consider cyclic extensions of totally ordered groups. Let $\m{G}$ be a totally ordered group and $\alpha$ an automorphism of $\m{G}$. Then the \emph{cyclic extension of $\m{G}$ by $\alpha$} is the totally ordered group  
\[
\m{G}(\alpha) = \alg{G\times\{\alpha^n \mid n\in \Z \}, \meet, \join, \cdot, {}^{-1}, \pair{\ut, \mathsf{id}}},
\]
where 
\[
\pair{g,\alpha^m} \cdot \pair{h,\alpha^n} = \pair{g\alpha^{m}(h),\alpha^{m+n}}, \quad \pair{g,\alpha^m}^{-1} = \pair{\alpha^{-m}(g^{-1}),\alpha^{-m}},
\] 
and the order is defined by 
\[
\pair{g,\alpha^{m}} > \pair{\ut,\mathsf{id}} \iff (n> 0) \text{ or } (n= 0 \text{ and } g> \ut).
\]
If we view $\m{G}$ as a subalgebra of $\m{G}(\alpha)$ by identifying $g\in G$ with $\pair{g,id}$ and  identify  $\alpha$ with the element $\pair{\ut,\alpha}$, then we have $\alpha g \alpha^{-1} = \alpha(g)$.
Now, the idea is to find, for given $n>0$, a totally ordered group $\m{G}$ with automorphisms $\alpha, \beta, \gamma$ such that $\alpha = \beta^n = \gamma^n$, but $\beta \neq \gamma$. With such automorphisms, the span $\pair{\m{G}(\alpha) \hookrightarrow \m{G}(\beta), \m{G}(\alpha) \hookrightarrow \m{G}(\gamma)}$ does not have an amalgam in $\RLG$: Since $\beta \neq \gamma$, there exists $g\in G$ such that $\beta g \beta^{-1} = \beta(g) \neq \gamma(g) = \gamma g \gamma^{-1}$, but, since $\alpha = \beta^n = \gamma^n$ and roots are unique, $\beta$ and $\gamma$ need to be identified in an amalgam. In \cite{PowellTsinakis1989b}, Powell and Tsinakis describe a suitable totally ordered group and construct automorphisms that satisfy the desired properties. They go on to conclude the failure of \prp{AP} for various varieties of representable $\ell$-groups. In fact, it is shown in \cite{PowellTsinakis1989b} that any variety of representable $\ell$-groups that contains one of the Medvedev varieties $\cls{M}^+$ or $\cls{M}^-$  fails the \prp{AP}, which yields that uncountably many varieties of representable $\ell$-groups fail the \prp{AP}.

\begin{theorem}[\cite{Glass1984,PowellTsinakis1989b}]\label{t:APfailRLG}
$\RLG$ does not have the amalgamation property. There are uncountably many subvarieties of $\RLG$ that do not have the \prp{AP}.
\end{theorem}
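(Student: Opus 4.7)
The plan is to exploit the interaction between uniqueness of roots in representable $\ell$-groups and the flexibility of cyclic extensions of totally ordered groups, essentially following the strategy Powell and Tsinakis describe and that the excerpt sketches. First I would establish the uniqueness-of-roots property: every representable $\ell$-group satisfies the quasiequation $x^n \approx y^n \Rightarrow x \approx y$ for all $n \geq 1$. Since $\RLG$ is generated as a quasivariety by its totally ordered members, it suffices to verify this for totally ordered groups, where $a < b$ implies $a^n < b^n$ by a direct induction using total order and the fact that left/right multiplication is order-preserving.

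Next I would set up the cyclic extension construction. Given a totally ordered group $\m{G}$ and an automorphism $\alpha$ of $\m{G}$, form $\m{G}(\alpha)$ as in the excerpt, with operations $\langle g, \alpha^m\rangle\langle h,\alpha^n\rangle = \langle g\alpha^m(h),\alpha^{m+n}\rangle$ and the lexicographic-style order. One verifies that $\m{G}(\alpha)$ is a totally ordered group (hence representable) and that, identifying $g\in G$ with $\langle g,\mathsf{id}\rangle$ and $\alpha$ with $\langle\ut,\alpha\rangle$, we have $\alpha g\alpha^{-1}=\alpha(g)$ for every $g\in G$.

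The crux is then to exhibit, for some fixed $n \geq 2$, a totally ordered group $\m{G}$ and three automorphisms $\alpha,\beta,\gamma$ of $\m{G}$ with $\alpha=\beta^n=\gamma^n$ but $\beta\neq\gamma$, witnessed by some $g\in G$ with $\beta(g)\neq\gamma(g)$. Granted such data, consider the span $\langle \m{G}(\alpha)\hookrightarrow \m{G}(\beta),\, \m{G}(\alpha)\hookrightarrow \m{G}(\gamma)\rangle$ in $\RLG$. In any amalgam $\langle \psi_1\colon \m{G}(\beta)\to\m{D},\psi_2\colon \m{G}(\gamma)\to\m{D}\rangle$ in $\RLG$, uniqueness of $n$-th roots applied to $\psi_1(\beta)^n=\psi_1(\alpha)=\psi_2(\alpha)=\psi_2(\gamma)^n$ would force $\psi_1(\beta)=\psi_2(\gamma)$; but then conjugation of $\psi_1(g)=\psi_2(g)$ by this common element would give $\psi_1(\beta(g))=\psi_2(\gamma(g))$, contradicting $\beta(g)\neq\gamma(g)$ together with injectivity of $\psi_1,\psi_2$ on $G$. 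I expect this step---constructing concrete $\m{G}$ with the required $\alpha,\beta,\gamma$---to be the main technical obstacle, since one must build a totally ordered group rich enough in automorphisms for distinct $n$-th roots of a single automorphism to exist; the approach would be to take a suitable ordered permutation group or a wreath-product-style construction as in \cite{PowellTsinakis1989b}.

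Finally, for the uncountability statement, I would invoke the existing knowledge of the subvariety lattice $\slat(\RLG)$ and the Medvedev varieties $\cls{M}^+$ and $\cls{M}^-$. Since these sit sufficiently low in $\slat(\RLG)$ and since $\slat(\LG)$ is known to contain continuum many subvarieties above them (this is part of the classical theory of $\ell$-group varieties), it suffices to show that any variety of representable $\ell$-groups containing $\cls{M}^+$ or $\cls{M}^-$ inherits the failure of \prp{AP}. The strategy is to arrange the counterexample span above inside $\cls{M}^+$ (or $\cls{M}^-$) by choosing $\m{G}$ within that variety, so that the cyclic extensions also lie within every larger variety; any amalgam in a larger variety $\V\supseteq\cls{M}^{\pm}$ would still satisfy uniqueness of roots (which holds throughout $\RLG$), reproducing the contradiction. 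Combining this with the known cardinality of the interval $[\cls{M}^{\pm},\RLG]$ in $\slat(\RLG)$ yields uncountably many subvarieties of $\RLG$ failing the \prp{AP}.
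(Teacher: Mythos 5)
Your proposal follows exactly the Powell--Tsinakis strategy that the paper itself sketches: uniqueness of $n$-th roots in $\RLG$ (verified on totally ordered groups, which generate $\RLG$ as a quasivariety), the cyclic extensions $\m{G}(\alpha)$, the span $\langle \m{G}(\alpha)\hookrightarrow\m{G}(\beta),\m{G}(\alpha)\hookrightarrow\m{G}(\gamma)\rangle$ whose amalgam would be forced to identify two distinct roots of $\alpha$, and the Medvedev varieties $\cls{M}^{+}$ and $\cls{M}^{-}$ for the uncountability claim. Like the paper, you defer the one genuinely technical step---constructing a totally ordered group $\m{G}$ with automorphisms $\alpha=\beta^n=\gamma^n$ and $\beta\neq\gamma$---to \cite{PowellTsinakis1989b}, so the two arguments coincide.
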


The failure of the \prp{AP} for $\LG$ and $\RLG$ is used by Gil-Férez, Ledda, and Tsinakis in \cite{GLT2015} to show that various varieties of residuated lattices also fail the \prp{AP}. We will focus on the semilinear ones. Let us call a residuated lattice $\m{A}$ \emph{fully distributive} if it satisfies the equations $x \meet (y \join z) \approx (x \meet y) \join (x \meet z)$ and $x(y \meet z) w \approx xyw \meet xzw$.  Note that, in particular, every semilinear residuated lattice is fully distributive.

\begin{lemma}[{\cite[Lemma 4.1]{GLT2015}}]
If $\m{A}$ is fully distributive, then its set of invertible elements forms a subalgebra of $\m{A}$ that is an $\ell$-group. 
\end{lemma}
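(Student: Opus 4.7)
The plan is to verify that the set $G := \{a \in A : a \text{ is invertible}\}$ is closed under the residuated-lattice operations and that the induced structure on $G$ is an $\ell$-group. For $a \in G$, write $a^{-1}$ for its two-sided multiplicative inverse.

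\emph{Easy closures.} Clearly $\ut \in G$, and $G$ is closed under $\cdot$ with $(ab)^{-1} = b^{-1}a^{-1}$. For each $a \in G$, left multiplication $L_a \colon A \to A$, $x \mapsto ax$, is a bijection with inverse $L_{a^{-1}}$, and both maps are order-preserving; hence $L_a$ is an order-automorphism of $\m{A}$, and similarly right multiplication $R_a$ is an order-automorphism. The residuation adjunctions then force $a \under b = a^{-1} b$ and $a \ovr b = a b^{-1}$ whenever $a \in G$, so $G$ is closed under both residuals. Since order-automorphisms preserve meets and joins, one moreover obtains $a(x \meet y) = ax \meet ay$ and $(x \meet y)a = xa \meet ya$ for every $a \in G$ (and analogously for $\join$).

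\emph{The main step: closure under $\meet$ and $\join$.} Guided by the standard $\ell$-group identities $(a \meet b)^{-1} = a^{-1} \join b^{-1}$ and $(a \join b)^{-1} = a^{-1} \meet b^{-1}$, I would verify these directly. Using that multiplication always distributes over joins, and that multiplication by invertible elements preserves meets (by the previous paragraph), one computes
\[
(a \meet b)(a^{-1} \join b^{-1}) = (\ut \meet ba^{-1}) \join (ab^{-1} \meet \ut),
\]
which by lattice distributivity collapses to $\ut \meet (ba^{-1} \join ab^{-1})$. Hence verifying that this product equals $\ut$ reduces to showing $\ut \leq ba^{-1} \join ab^{-1}$. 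Setting $c := ba^{-1} \in G$ and noting $c^{-1} = ab^{-1}$, the entire problem reduces to the single uniform claim:
\[
\ut \leq c \join c^{-1} \qquad \text{for every } c \in G. \qquad (\star)
\]

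\emph{The main obstacle.} Establishing $(\star)$ in full generality is the crux of the argument. In the totally ordered case it is immediate, since one cannot simultaneously have $c < \ut$ and $c^{-1} < \ut$: otherwise $\ut = c c^{-1} < \ut \cdot c^{-1} = c^{-1} < \ut$, a contradiction. In the fully distributive but not necessarily semilinear case, one starts from the easy observation
\[
(c \join c^{-1})^2 = \ut \join c^2 \join c^{-2} \geq \ut,
\]
and then leverages the full-distributivity axiom $x(y \meet z)w \approx xyw \meet xzw$ together with the lattice-distributive identity $\ut \meet (c \join c^{-1}) = (\ut \meet c) \join (\ut \meet c^{-1})$ to force $(\ut \meet c) \join (\ut \meet c^{-1}) = \ut$, which is equivalent to $(\star)$. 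Once $(\star)$ is established, the dual calculation showing $(a \join b)(a^{-1} \meet b^{-1}) = \ut$ is completely analogous, and the $\ell$-group axioms on $G$ then drop out from the monoid structure, the inherited distributive lattice, and the existence of two-sided inverses for every element of $G$.
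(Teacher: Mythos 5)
The survey itself gives no proof of this lemma---it is imported verbatim from \cite{GLT2015}---so your proposal can only be measured against the mathematics. Your reductions are correct and well organized: closure of $G$ under $\cdot$ and the residuals via $a \under b = a^{-1}b$ and $a \ovr b = ab^{-1}$, and the computation showing that closure of $G$ under $\meet$ and $\join$ (with $(a\meet b)^{-1}=a^{-1}\join b^{-1}$ and its dual) reduces to the single claim $(\star)$: $\ut \leq c \join c^{-1}$ for every invertible $c$. That is exactly the right skeleton.

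The gap is that $(\star)$---which you yourself flag as the crux---is asserted rather than proved, and the route you indicate does not lead anywhere. From $(c \join c^{-1})^2 \geq \ut$ one cannot conclude $c \join c^{-1} \geq \ut$ in this setting: the $\ell$-group derivation of that implication multiplies by inverses of auxiliary elements such as $(c\join c^{-1})\meet\ut$, whose invertibility is not available here (it is essentially what is being proved). What actually closes the argument is a two-way evaluation of the single product $(c\join\ut)(c^{-1}\meet\ut)$. Distributing the join over the product first (valid in any residuated lattice) and using that left multiplication by $c$ preserves meets gives
\[
(c\join\ut)(c^{-1}\meet\ut) \;=\; c(c^{-1}\meet\ut)\join(c^{-1}\meet\ut) \;=\; (\ut\meet c)\join(\ut\meet c^{-1}) \;\leq\; \ut,
\]
while distributing the meet first (this is where $x(y\meet z)w\approx xyw\meet xzw$ does its work) and then applying lattice distributivity gives
\[
(c\join\ut)(c^{-1}\meet\ut) \;=\; (\ut\join c^{-1})\meet(\ut\join c) \;=\; \ut\join(c\meet c^{-1}) \;\geq\; \ut.
\]
Hence $(\ut\meet c)\join(\ut\meet c^{-1})=\ut$, which by lattice distributivity is $\ut\meet(c\join c^{-1})=\ut$, i.e., $(\star)$. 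With this step supplied, the rest of your argument goes through.
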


The following theorem is a consequence of the previous lemma.

\begin{theorem}[{\cite[Theorem 4.2]{GLT2015}}]\label{t:APfailFD}
If $\V$ is a variety of fully distributive residuated lattices such that $\V \cap \LG$ does not have the \prp{AP}, then $\V$ does not have the \prp{AP}.
\end{theorem}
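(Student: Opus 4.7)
The strategy is to take advantage of the lemma stated just before the theorem: in any fully distributive residuated lattice $\m{A}$, the set of invertible elements forms a subalgebra that is an $\ell$-group. I would proceed by contrapositive. Suppose $\V$ is a variety of fully distributive residuated lattices and $\V \cap \LG$ fails the $\prp{AP}$. Then there is a span $\tuple{\varphi_1 \colon \m{A} \to \m{B}, \varphi_2 \colon \m{A} \to \m{C}}$ in $\V \cap \LG$ with no amalgam in $\V \cap \LG$. I would aim to show that this same span admits no amalgam in $\V$ at all, thereby demonstrating that $\V$ fails the $\prp{AP}$.

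For a contradiction, suppose $\tuple{\psi_1 \colon \m{B} \to \m{D}, \psi_2 \colon \m{C} \to \m{D}}$ is an amalgam of this span in $\V$. The key observation is that residuated-lattice homomorphisms preserve invertibility: if $ab = \ut = ba$ in $\m{B}$, then $\psi_1(a)\psi_1(b) = \ut = \psi_1(b)\psi_1(a)$ in $\m{D}$. Since $\m{B}$ and $\m{C}$ are $\ell$-groups, every element of $\m{B}$ and $\m{C}$ is invertible, so $\psi_1[\m{B}]$ and $\psi_2[\m{C}]$ both lie inside the set $D^\ast$ of invertible elements of $\m{D}$. By the cited lemma, $D^\ast$ is the universe of a subalgebra $\m{D}^\ast$ of $\m{D}$ which is an $\ell$-group. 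Because $\V$ is a variety, $\m{D}^\ast \in \V$, and hence $\m{D}^\ast \in \V \cap \LG$.

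Now the restrictions $\psi_1 \colon \m{B} \to \m{D}^\ast$ and $\psi_2 \colon \m{C} \to \m{D}^\ast$ are still injective homomorphisms, and they still satisfy $\psi_1 \circ \varphi_1 = \psi_2 \circ \varphi_2$. Thus $\tuple{\psi_1, \psi_2}$ is an amalgam of the original span inside $\V \cap \LG$, contradicting the choice of span.

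There is essentially no serious obstacle beyond the invocation of the lemma; the only minor care required is bookkeeping around the fact that $\LG$ and $\RL$ have (term-equivalent but not identical) signatures, so that the phrase $\V \cap \LG$ is to be read via the obvious identification $x^{-1} := \ut \ovr x$. Once one verifies that invertibility is preserved by residuated-lattice homomorphisms and that subalgebras remain in $\V$, the argument reduces to reading off the amalgam from its invertible part.
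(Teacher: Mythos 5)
Your proof is correct and follows exactly the route the paper intends: the statement is presented as a consequence of the preceding lemma on invertible elements forming an $\ell$-group subalgebra, and your argument simply fills in the routine details (images of the $\ell$-groups land in the invertible part of any amalgam, which lies in $\V \cap \LG$ since varieties are closed under subalgebras). No gaps; the signature bookkeeping you mention is handled correctly.
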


Now, observe that $\RLG = \SemRL\cap\LG = \CanSRL\cap\LG$. We may thus deduce the following result immediately from Theorems~\ref{t:APfailFD} and \ref{t:APfailRLG}.

\begin{theorem}[{\cite[Theorem 4.3]{GLT2015}}]
Neither $\SemRL$ nor $\CanSRL$ has the \prp{AP}.
\end{theorem}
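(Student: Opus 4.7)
The plan is to deduce the statement as an immediate corollary of Theorems~\ref{t:APfailRLG} and \ref{t:APfailFD}; there is essentially no additional work beyond fitting the pieces together. The first step is to confirm that $\SemRL$ and $\CanSRL$ satisfy the hypotheses of Theorem~\ref{t:APfailFD}, namely that they are varieties of fully distributive residuated lattices. This is already recorded in the paragraph preceding Theorem~\ref{t:APfailFD}: every semilinear residuated lattice is fully distributive, and $\CanSRL \subseteq \SemRL$, so the same applies to $\CanSRL$.

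The second step is to compute the $\ell$-group parts of these varieties. By definition, $\SemRL \cap \LG = \RLG$, since a representable $\ell$-group is precisely a semilinear one. For the cancellative side, note that every group, and hence every $\ell$-group, is cancellative, so $\SemRL \cap \LG \subseteq \CanSRL \cap \LG$; the reverse inclusion $\CanSRL \cap \LG \subseteq \SemRL \cap \LG$ is immediate from $\CanSRL \subseteq \SemRL$. Thus $\CanSRL \cap \LG = \SemRL \cap \LG = \RLG$, and Theorem~\ref{t:APfailRLG} tells us that this common variety fails the amalgamation property.

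Applying Theorem~\ref{t:APfailFD} with $\V = \SemRL$ and then with $\V = \CanSRL$ now yields that neither variety has the \prp{AP}. The only conceivable subtlety is the verification that $\LG \cap \CanSRL = \RLG$, which requires only the trivial observation that groups are cancellative; all the genuinely substantive work has been absorbed into Theorem~\ref{t:APfailFD} (the transfer lemma from the fully distributive setting to its $\ell$-group reduct) and into Theorem~\ref{t:APfailRLG} (the failure of the \prp{AP} for $\RLG$ itself, established by the cyclic extension argument of Powell and Tsinakis sketched earlier in the section).
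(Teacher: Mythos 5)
Your proposal is correct and follows exactly the route the paper takes: it observes that $\RLG = \SemRL\cap\LG = \CanSRL\cap\LG$ and then applies Theorems~\ref{t:APfailFD} and \ref{t:APfailRLG}. The only extra detail you supply, the verification that $\ell$-groups are cancellative so that the two intersections coincide, is a harmless elaboration of the paper's one-line observation.
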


Notice that the proof of the preceding theorem cannot be adapted in the presence of commutativity since $\CSemRL\cap\LG = \CCanSRL\cap\LG = \ALG$, which has the \prp{AP}. This implicates the following question, which appears to demand new techniques.

\begin{question}[{\cite[Problem 6]{GLT2015}}]
Does the variety of commutative cancellative semilinear residuated lattices have the amalgamation property?
\end{question}

It also appears that these methods are insufficient to deal with the addition of knotted rules, in particular integrality. 

\begin{question}[{\cite[Problem 2]{GLT2015}}]
Does the variety of integral cancellative semilinear residuated lattices have the amalgamation property?
\end{question}

\subsection{Wajsberg Hoops and MV-algebras}\label{sec:MV}
Beyond the rather incomplete picture painted in Section~\ref{sec:lgroups}, it appears that there is little known about the \prp{AP} in cancellative varieties. However, much more is known for some prominent varieties that are constructed from the cancellative ones.

Probably the best known of these is the variety of \emph{MV-algebras}. These are bounded integral commutative semilinear residuated lattices that satisfy the identity $(x \to y) \to y  \approx x \join y$, and consequently generalize Boolean algebras. They have been studied extensively as algebraic models of the infinite-valued {\L}ukasiewicz propositional logic. The analogues of these without including a designated least element are \emph{Wajsberg hoops}; explicitly, these are integral commutative semilinear residuated lattices that satisfy $(x \to y) \to y  \approx x \join y$. Wajsberg hoops are exactly the bottom-free subreducts of MV-algebras; see \cite{Agliano2003}. We denote the variety of MV-algebras by $\MV$ and the variety of Wajsberg hoops by $\WH$.

The connection between the aforementioned algebras and cancellative residuated lattices comes from the well-known categorical equivalence of Mundici \cite{Mundici1986}. The latter yields that MV-algebras can be represented as intervals of abelian $\ell$-groups. More precisely, if $\m{G} = \alg{G,\meet,\join, +, -, 0}$ is an abelian $\ell$-group and $u\in G$, $u\geq 0$, then we can define the MV-algebra $\m{\Gamma}(\m{G},u) = \alg{[0,u],\meet,\join, \cdot, \to ,u,0}$, where meet and join are just the restrictions of the meet and join of $\m{G}$ to $[0,u]$, $a \cdot b  = (a + b -u)\join 0$, and $a \to b = (u + b - a) \meet u$. It is shown in \cite{Mundici1986} that any MV-algebra can be represented as an algebra of this form.

The following totally ordered MV-algebras are especially noteworthy:
\begin{itemize}
\item $[0,1]_{\MV} = \m{\Gamma}(\R,1)$, the standard MV-algebra;
\item $\Ln{n} = \m{\Gamma}(\Z,n)$, the $n+1$-element totally ordered MV-algebra (for $n\in \N)$;
\item $\Lnw{n} = \m{\Gamma}(\Z\overrightarrow{\times} \Z, \pair{n,0})$, where $\Z\overrightarrow{\times} \Z$ denotes the lexicographic product of the $\ell$-group of the integers with itself (for $n\in \N)$.
\end{itemize}
We denote the corresponding Wajsberg hoop reducts by $[0,1]_{\WH}$, $\Wn{n}$, and $\Wnw{n}$, respectively. Moreover, we will denote by $\m{Z}^-$ the Wajsberg hoop which is the negative cone of the $\ell$-group of integers.

In \cite{DiNola2000}, Di~Nola and Lettieri completely characterize the varieties of MV-algebras with the \prp{AP}. These are exactly the varieties generated by a single totally ordered MV-algebra.

\begin{theorem}[{\cite[Theorem 13]{DiNola2000}}]\label{t:AP-MV}
For a variety $\V$ of MV-algebras the following are equivalent:
\begin{enumerate}[\normalfont (1)]
\item $\V$ has the amalgamation property.
\item $\V$ is generated by one totally ordered MV-algebra.
\item $\V$ is of the form $\vr(\m{A})$ with $\m{A} \in \{[0,1]_\MV\} \cup \{\Ln{n},\Lnw{n} \mid n\in \N\}$.
\end{enumerate}
\end{theorem}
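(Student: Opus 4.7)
The plan is to treat the theorem in two parts: the structural equivalence (2) $\Leftrightarrow$ (3), which is a classification issue, and the amalgamation-theoretic equivalence (1) $\Leftrightarrow$ (2), where the real work lies.

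For (2) $\Leftrightarrow$ (3), the natural tool is Komori's classification of subvarieties of $\MV$, which asserts that every proper subvariety of $\MV$ has the form $\vr(\{\Ln{n_i}\}_{i \in I} \cup \{\Lnw{m_j}\}_{j \in J})$ for finite index sets $I, J$, together with the containment relations $\Ln{m} \in \vr(\Ln{n}) \Leftrightarrow m \mid n$ and the analogous statements for the $\Lnw{n}$. Combined with Chang's completeness theorem, which gives $\MV = \vr([0,1]_\MV)$, it is then routine to check that a variety is generated by a single totally ordered MV-algebra precisely when it equals $\vr([0,1]_\MV)$, $\vr(\Ln{n})$, or $\vr(\Lnw{n})$.

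For (2) $\Rightarrow$ (1), the cleanest route for $\V = \MV$ is via Mundici's categorical equivalence between $\MV$ and the category of abelian $\ell$-groups with strong unit, which transfers the \prp{AP} from $\ALG$ (Section~\ref{sec:lgroups}) to $\MV$. For $\V = \vr(\Ln{n})$, I would use Corollary~\ref{c:ess-AP-comsem}: the members of $\chain{\V}$ are precisely $\{\Ln{d} : d \mid n\}$, a finite family all lying inside $\Ln{n}$, and a direct combinatorial check shows \prp{EAP} holds. For $\V = \vr(\Lnw{n})$, either transfer through the corresponding class of unital abelian $\ell$-groups, or else use quantifier elimination in an appropriate expanded language along the lines of Theorem~\ref{t:QE-AP}.

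For $\neg$(2) $\Rightarrow$ $\neg$(1), by the classification above the obstructions to (2) reduce to $\V$ containing two totally ordered MV-algebras neither of which lies in the variety generated by the other---the canonical minimal situations being $\{\Ln{p}, \Ln{q}\} \subseteq \V$ with $p \nmid q$ and $q \nmid p$, and mixed cases involving $\Lnw{n}$ and $\Ln{m}$. For each such configuration I would, via Corollary~\ref{c:ess-AP-comsem}, construct an essential span in $\chain{\V}$ with a small common subalgebra $\m{C}$ embedding into two incompatible chains $\m{A}$ and $\m{B}$ drawn from the witnessing generators; since any totally ordered MV-algebra containing copies of $\m{A}$ and $\m{B}$ identified along $\m{C}$ would have to contain $\Ln{\mathrm{lcm}(p,q)}$ (or an analogous chain), and Komori's classification rules this out of $\V$, the span admits no amalgam in $\chain{\V}$. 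The main obstacle will be twofold: arranging these spans to be genuinely \emph{essential} (so that Corollary~\ref{c:ess-AP-comsem} applies and the second homomorphism cannot collapse to avoid the obstruction), and giving a uniform treatment of the mixed $\Lnw{n}$ cases, where the infinitesimal layer interacts with the standard part and prevents a direct reduction to the $\ell$-group side. I expect a short case analysis over the minimal Komori configurations---$\{\Ln{p}, \Ln{q}\}$, $\{\Ln{n}, \Lnw{m}\}$ with $n \nmid m$, etc.---to suffice.
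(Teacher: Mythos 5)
The paper does not prove this theorem: it is imported verbatim from Di Nola--Lettieri \cite{DiNola2000}, followed only by the remark that ``alternative proofs of these results may also be given with relative ease using the techniques discussed in Section~\ref{sec:tools}, as the reader may verify.'' Your outline is exactly that alternative proof, so there is no in-paper argument to compare against; judged on its own terms the plan is sound, and I will just record where the weight actually falls. The obstacle you flag about essentiality largely dissolves once you observe that every finite MV-chain $\Ln{k}$ is simple, so any span whose \emph{second} leg lands in some $\Ln{k}$ is automatically essential (exactly as in the proof of Proposition~\ref{prop:simple chain}): for the configuration $\{\Ln{p},\Ln{q}\}$ with $p\nmid q$ and $q\nmid p$ take $\langle \Ln{1}\hookrightarrow\Ln{p},\,\Ln{1}\hookrightarrow\Ln{q}\rangle$, and for the mixed configuration take $\langle \Ln{m}\hookrightarrow\Lnw{m},\,\Ln{m}\hookrightarrow\Ln{n}\rangle$ with $m$ a proper divisor of $n$ and $\Lnw{n}\notin\V$ --- the order of the legs matters here, since $\Ln{m}\hookrightarrow\Lnw{m}$ is \emph{not} essential (the radical congruence of $\Lnw{m}$ restricts trivially to $\Ln{m}$). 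A totally ordered amalgam of the first span would contain $\Ln{\mathrm{lcm}(p,q)}$ inside its quotient by the radical; an amalgam of the second would have rank a multiple of $n$ together with a nontrivial radical; Komori's classification excludes both from $\chain{\V}$, and Corollary~\ref{c:ess-AP-comsem} converts this into failure of the \prp{AP}. Your claim that these two configurations exhaust the obstructions to single generation does check out against the classification. On the positive side, $\vr(\Ln{n})$ and $\MV$ are as routine as you say (for $\MV$ the Mundici transfer needs the standard care that the amalgamating $\ell$-group be cut down to the convex $\ell$-subgroup generated by the common unit), but $\vr(\Lnw{n})$ is where the real work sits and is the point your sketch leaves most open: the clean route within the paper's toolkit is to note that $\chain{\vr(\Lnw{n})}$ consists of the MV-chains of rank dividing $n$ and to build an amalgam of a span by taking the $\mathrm{lcm}$ of the ranks together with an amalgam of the radicals, the latter being totally ordered abelian groups and hence covered by Proposition~\ref{p:divisible-QE}; making the finite layer and the radical layer interact coherently is exactly the nontrivial verification, and your ``either/or'' should be resolved into one worked argument there.
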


In \cite{Metcalfe2014}, Metcalfe, Montagna, and Tsinakis also characterize the varieties of Wajsberg hoops with the \prp{AP}.

\begin{theorem}[{\cite[Theorem 63]{Metcalfe2014}}]\label{t:AP-WH}
A variety $\V$ of Wajsberg hoops has the amalgamation property if and only if one of the following holds.
\begin{enumerate}[\normalfont (1)]
\item $\V = \vr(\m{A})$ with $\m{A} \in \{[0,1]_\WH, \m{Z}^-\} \cup \{\Wn{n},\Wnw{n} \mid n\in \N\}$.
\item $\V = \vr(\Wn{n},\m{Z}^-)$ for some $n\geq 1$.
\end{enumerate}
\end{theorem}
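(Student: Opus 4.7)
The plan is to apply Corollary~\ref{c:ess-AP-comsem} (available since $\WH \subseteq \CSemRL$), reducing the problem to checking the essential amalgamation property for $\chain{\V}$ for each candidate variety $\V \subseteq \WH$. The indispensable structural ingredient is the classical Blok--Ferreirim--Agliano dichotomy for totally ordered Wajsberg hoops: every such algebra is either \emph{cancellative} (and then isomorphic to the negative cone of a totally ordered abelian group) or \emph{bounded} (and then the bottom-free reduct of a totally ordered MV-algebra). This lets us decompose $\chain{\V}$ into a ``cancellative part'' and a ``bounded part'' and transfer results between $\WH$, $\MV$ (Theorem~\ref{t:AP-MV}), and $\ALG$ (known to have the \prp{AP} via quantifier elimination, cf.\ Section~\ref{sec:lgroups}).

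For the ``if'' direction I would argue case-by-case. The variety $\vr(\m{Z}^-)$ has chain class consisting of negative cones of subgroups of $(\Z,+)$, so amalgamation of any span lifts to a span in $\ALG$, where it is solved, and then one restricts back to the negative cone; this gives the \prp{AP}. For $\vr(\Wn{n})$ and $\vr(\Wnw{n})$, the chain classes are exactly the bottom-free reducts of $\chain{\vr(\Ln{n})}$ and $\chain{\vr(\Lnw{n})}$; since these MV-varieties have the \prp{AP} by Theorem~\ref{t:AP-MV}, any essential span of Wajsberg hoop chains can be enriched with bottoms, amalgamated in the MV-variety, and then have the bottom removed. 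The case $\V = \vr([0,1]_\WH) = \WH$ is handled by using Mundici's correspondence on bounded spans and the $\ALG$ amalgamation on cancellative spans, combined with a ``top-extension'' trick to treat mixed spans where the common subalgebra is cancellative and one leg is bounded. Finally, $\vr(\Wn{n}, \m{Z}^-)$ requires separating any essential span according to whether its components are bounded or cancellative: if both legs are of the same type, one of the previous cases applies; if one leg is $\Wn{k}$ (with $k \mid n$) and the other is a subalgebra of $\m{Z}^-$, the common subalgebra is forced to be trivial (since any non-trivial bounded chain fails to embed into $\m{Z}^-$), and the span amalgamates into $\Wnw{n}$, which belongs to $\vr(\Wn{n}, \m{Z}^-)$.

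For the ``only if'' direction, I would show that a variety $\V$ not on the list always contains a ``forbidden pair'' of chains, and that this forces a failing essential span. The typical obstructions are: two finite chains $\Wn{m}, \Wn{n} \in \chain{\V}$ with $m \nmid n$ and $n \nmid m$, in which case the inclusions from $\Wn{\gcd(m,n)}$ give an essential span whose only amalgam is $\Wn{\lcm(m,n)} \notin \V$; a pair $\Wnw{m}, \Wn{n}$ with $m \neq n$; or $\Wnw{m}, \Wnw{n}$ with $m \neq n$. A subtler case is when $\V$ contains $\Wnw{m}$ and $\m{Z}^-$ but not the whole $\vr(\Wn{m}, \m{Z}^-)$ structure, which can be blocked by a span exploiting the lexicographic layer of $\Wnw{m}$. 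Each configuration can be ruled out by exhibiting an explicit essential span whose unique minimal amalgam among totally ordered Wajsberg hoops sits outside $\V$, invoking Corollary~\ref{c:ess-AP-comsem}.

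The main obstacle is the case analysis for the ``only if'' direction: $\slat(\WH)$ has a combinatorially rich structure analogous to Komori's description of $\slat(\MV)$, and one must verify that the combinations listed in (1)--(2) of the statement exhaust the possibilities. A secondary difficulty is the mixed variety $\vr(\Wn{n}, \m{Z}^-)$ in the ``if'' direction, where one must ensure that amalgamation arguments for the bounded and cancellative parts do not clash on shared subalgebras; the saving observation is that any common subalgebra of a bounded chain and a cancellative chain is trivial, decoupling the two components cleanly.
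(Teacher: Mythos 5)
The paper does not actually prove this theorem; it cites \cite{Metcalfe2014} and merely remarks that an alternative proof can be extracted from the tools of Section~\ref{sec:tools}. Your overall architecture---reduce to essential spans of chains via Corollary~\ref{c:ess-AP-comsem} and split along the bounded/cancellative dichotomy for totally ordered Wajsberg hoops---is exactly the route the paper has in mind. However, your treatment of case (2) contains a genuine error. You claim that a span with a trivial base, one leg $\Wn{k}$ and one leg a non-trivial cancellative chain, ``amalgamates into $\Wnw{n}$, which belongs to $\vr(\Wn{n},\m{Z}^-)$.'' This is false: $\Wnw{n}$ is neither $n$-potent (the coatom $a$ satisfies $a^k\neq a^{k+1}$ for all $k$) nor cancellative (its bottom is a non-unit idempotent), and by J\'{o}nsson's Lemma the totally ordered members of $\vr(\Wn{n})\vee\vr(\m{Z}^-)$ are exactly those of $\vr(\Wn{n})$ together with those of $\vr(\m{Z}^-)$, so $\Wnw{n}\notin\vr(\Wn{n},\m{Z}^-)$. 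In fact \emph{no} chain in $\vr(\Wn{n},\m{Z}^-)$ can amalgamate a non-trivial bounded leg with a non-trivial cancellative leg (a finite chain cannot contain the latter, and a cancellative chain has no non-unit idempotent so cannot contain the former). The correct resolution is that such spans are simply never essential: an embedding of the trivial algebra into a non-trivial $\m{C}$ is not essential, because the collapse $\m{C}\to\m{1}$ composed with it is still an embedding. Hence Corollary~\ref{c:ess-AP-comsem} imposes no requirement on these spans, and the amalgam in the variety is just the product $\Wn{k}\times\m{C}$ with the coordinate embeddings. As written, your argument either produces an amalgam outside the variety or, if one insists on a chain amalgam, would wrongly ``prove'' that $\vr(\Wn{n},\m{Z}^-)$ fails the \prp{AP}.

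There are also smaller inaccuracies you should repair. The chains of $\vr(\m{Z}^-)$ are the negative cones of \emph{all} totally ordered abelian groups (this variety is the class of cancellative Wajsberg hoops), not merely of subgroups of $(\Z,+)$; your lifting-to-$\ALG$ argument still works, but one must check that embeddings of negative cones extend to the enveloping groups and that the resulting group amalgam restricts correctly. In the ``only if'' direction your forbidden pairs are stated too coarsely: $\{\Wnw{m},\Wn{n}\}$ and $\{\Wnw{m},\Wnw{n}\}$ are harmless whenever one index divides the other (the smaller algebra is then a subalgebra of the larger and no new variety arises), and the ``subtler case'' of a variety containing $\Wnw{m}$ and $\m{Z}^-$ is vacuous since $\m{Z}^-$ and $\Wn{m}$ already embed into $\Wnw{m}$. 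Finally, the exhaustiveness of your case analysis rests on the known description of $\slat(\WH)$, which you assert but do not supply; this is the main content of the ``only if'' direction and cannot be omitted from a complete proof.
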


Alternative proofs of these results may also be given with relative ease using the techniques discussed in Section~\ref{sec:tools}, as the reader may verify.

\subsection{Basic Hoops and BL-algebras}\label{sec:BL}
The characterizations of $\amal(\MV)$ and $\amal(\WH)$ are rather clean, and it is natural to wonder whether they can be pushed to more complicated subvarieties of $\CSemRL$ for which MV-algebras or Wajsberg hoops play a significant structural role. Probably the most significant varieties of this kind are the variety of \emph{BL-algebras} and the variety of \emph{basic hoops}.
Formally a \emph{BL-algebra} is a bounded integral commutative semilinear residuated lattice that satisfies the \emph{divisibility} condition, i.e., the formula
\[
x\leq y \implies \exists z (yz \approx x).
\]
Divisibility can be captured equationally by the identity $x(x \to y) \approx x \meet y$, so BL-algebras form a variety denoted by $\BL$. Notably, $\BL$ contains both $\MV$ and $\GA$ as subvarieties. BL-algebras have been highly influential as models of fuzzy logics, and indeed give the equivalent algebraic semantics of Petr H\'{a}jek's basic fuzzy logic, which is the logic of continuous triangular norms; see \cite{Hajek1998,CEGT2000}. \emph{Basic hoops} dispense with the requirement of being bounded, i.e., they are integral commutative semilinear residuated lattice that satisfies the equation $x(x \to y) \approx x \meet y$. The variety of basic hoops is denoted $\BH$. 

BL-algebras and basic hoops are very closely linked to MV-algebras and Wajsberg hoops: Aglian\`{o} and Montagna showed in \cite{Agliano2003} that every totally ordered BL-algebra or basic hoop is the nested sum\footnote{The nested sum is more commonly called the \emph{ordinal sum} in the literature on BL-algebras} of a family of totally ordered Wajsberg hoops (in the BL case the last algebra in the nested sum is an MV-algebra):

\begin{proposition}[\cite{Agliano2003}]\label{prop:ordinal sum decomp}
\hfill
\begin{enumerate}[\normalfont (i)]
\item For every totally ordered basic hoop $\m{A}$ there exists a unique (up to isomorphism) totally ordered set $\alg{I,\leq}$ and unique (up to isomorphism) totally ordered Wajsberg hoop $\m{A}_i$, $i\in I$, such that $\m{A} \cong \Nsum_{i\in I} \m{A}_i$.

\item For every totally ordered BL-algebra $\m{A}$ there exists a unique (up to isomorphism) totally ordered set $\alg{I,\leq}$ with minimum $i_0$, a unique (up to isomorphism) totally ordered MV-algebra $\m{A}_{i_0}$, and  unique (up to isomorphism) totally ordered Wajsberg hoops $\m{A}_i$, $i\in I{\setminus}\{i_0\}$, such that $\m{A} \cong \Nsum_{i\in I} \m{A}_i$.

\item For every totally ordered BL-algebra $\m{A}$ there exists a  a unique (up to isomorphism) totally ordered MV-algebra $\m{A}_1$ and a unique (up to isomorphism) totally ordered basic hoop $\m{A}_2$ such that $\m{A} \cong \m{A}_1 \nsum \m{A}_2$.
\end{enumerate}
\end{proposition}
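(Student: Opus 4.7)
My approach would follow the classical strategy of Aglianò and Montagna, which identifies the \emph{Wajsberg components} of a totally ordered basic hoop by means of an Archimedean-type equivalence. The plan for (i) is to define a relation $\sim$ on the universe $A$ of $\m{A}$ by declaring $a \sim b$ when $a = b = \ut$, or else $a, b < \ut$ and there exist $m, n \in \mathbb{N}$ with $a^m \leq b$ and $b^n \leq a$. Since $\m{A}$ is totally ordered and integral, each $\sim$-class is convex in $A$, and a careful use of divisibility $x(x \to y) \approx x \meet y$ combined with prelinearity shows that the supremum of each class (adjoining a formal top $\ut_i$ when not achieved in $A$) is idempotent in $\m{A}$. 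This partitions $A$ into a family of intervals indexed by a totally ordered set $\alg{I, \leq}$ inherited from the order on $A$.

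Next, I would equip each block together with its top idempotent $\ut_i$ with the restrictions of the operations of $\m{A}$, obtaining a totally ordered integral hoop $\m{A}_i$. The key step is to verify that each $\m{A}_i$ satisfies the Wajsberg identity $(x \to y) \to y \approx x \join y$. This follows because the Archimedean-equivalence within a single block precludes any interior idempotent, and once all non-unit elements of the block are Archimedean-equivalent, divisibility and prelinearity force the Wajsberg law. One then checks that multiplication across distinct components is controlled by the nested sum rules: for $a \in A_i$, $b \in A_j$ with $i < j$, totality of the order forces $a \leq b$, and in the ambient basic hoop $a \cdot b = a$ precisely because $a$ lies strictly below the idempotent top of its block, which dominates the product. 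Hence the canonical map $\m{A} \to \Nsum_{i \in I} \m{A}_i$ sending each element to itself in its component is a residuated lattice isomorphism.

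Parts (ii) and (iii) would then follow readily. In a totally ordered BL-algebra, the bottom element lies in a unique $\sim$-class, which becomes the minimum $i_0 \in I$. The component $\m{A}_{i_0}$ inherits both bounds of $\m{A}$, so it is a bounded Wajsberg hoop, i.e., an MV-algebra; the remaining components are Wajsberg hoops. Grouping $\m{A}_{i_0}$ against the nested sum of all other components yields the two-piece decomposition of (iii). For uniqueness across all three parts, any nested-sum decomposition of $\m{A}$ must realize each factor as a convex set of pairwise Archimedean-equivalent elements topped by an idempotent; since this partition is intrinsic to $\m{A}$, both the index chain $\alg{I,\leq}$ and each component $\m{A}_i$ are determined up to isomorphism.

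The hardest part of the plan is verifying that each block, equipped with its (possibly adjoined) top idempotent, actually satisfies the Wajsberg identity $(x \to y) \to y \approx x \join y$ as an abstract hoop, not merely as a subset of $\m{A}$. This requires carefully combining the basic hoop axioms of $\m{A}$ with the Archimedean condition within a block, and it is essentially where the specific structure of basic hoops (as opposed to more general integral commutative semilinear residuated lattices) enters the argument. A secondary technical subtlety arises when a block has no supremum in $A$; here one must adjoin a formal top and check that the resulting Wajsberg hoop glues correctly into the nested sum without perturbing the isomorphism.
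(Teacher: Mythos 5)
Your overall architecture---partition the chain into convex blocks, show each block is a Wajsberg hoop, and recover $\m{A}$ as the nested sum of the blocks---is the right one and is essentially the strategy of Aglian\`o and Montagna (the paper itself does not reprove this proposition; it quotes it from \cite{Agliano2003}). However, the equivalence relation you start from is the wrong one, and the argument breaks at the very first step. Consider the Wajsberg chain $\Wnw{1}$, the hoop reduct of $\Lnw{1}=\m{\Gamma}(\Z\overrightarrow{\times}\Z,\pair{1,0})$, whose universe consists of the pairs $\pair{0,n}$ and $\pair{1,-n}$ for $n\geq 0$. Since it satisfies the Wajsberg identity, it is sum-irreducible: in any nested sum $\m{A}_1\nsum\m{A}_2$ of non-trivial integral chains, choosing $a\in A_1{\setminus}\{\ut\}$ and $b\in A_2{\setminus}\{\ut\}$ gives $b\to a=a$, hence $(b\to a)\to a=\ut\neq b=a\join b$. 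So the decomposition asserted in (i) must be the trivial one with $\lvert I\rvert=1$. Yet under your Archimedean relation $\pair{1,-1}$ and $\pair{0,1}$ are inequivalent, because $\pair{1,-1}^m=\pair{1,-m}>\pair{0,1}$ for every $m$. Thus your partition strictly refines the Wajsberg-component partition, your blocks are not closed under the operations, and the cross-block multiplication rule you assert ($ab=a$ for $a$ in a lower block) fails outright: $\pair{0,1}\cdot\pair{1,-1}=\pair{0,0}\neq\pair{0,1}$. The same example defeats the claim that the supremum of each class is, or can harmlessly be adjoined as, an idempotent of $\m{A}$.

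The repair is to replace the power-based relation by the residual-based one that Aglian\`o and Montagna actually use: for $a<b<\ut$, declare $a\sim b$ iff $b\to a>a$. In a nested sum one has $b\to a=a$ precisely when $a$ lies in a strictly lower component than $b$, whereas inside a single Wajsberg component $b\to a=a$ would violate $(x\to y)\to y\approx x\join y$; so this relation, unlike yours, carves out exactly the Wajsberg components (in $\Lnw{1}$, for instance, $\pair{1,-1}\to\pair{0,1}=\pair{0,2}>\pair{0,1}$, keeping the chain in one piece). With that substitution the remainder of your outline---convexity of the classes, verification of the Wajsberg law on each block using divisibility, the treatment of the bottom block in (ii), the two-piece regrouping in (iii), and the ``intrinsic partition'' argument for uniqueness---proceeds along the intended lines. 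Note also that in the nested sum as defined in this paper all components share the single unit $\ut$, so no formal tops should be adjoined.
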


Given this beautiful structure theorem for BL-algebras and basic hoops in terms of MV-algebras and Wajsberg hoops, the idea of pushing the transparent description of $\amal(\MV)$ and $\amal(\WH)$ to BL-algebras and basic hoops is quite enticing. However, this turns out to be dramatically more difficult than it may at first seem.

The first effort to study the \prp{AP} for varieties of BL-algebras was Montagna's 2006 paper \cite{Montagna2006}, which shows that various known varieties of BL-algebras have the \prp{AP} and that there are continuum-many varieties of BL-algebras that fail the \prp{AP}. Later on, Cortonesi, Marchioni, and Montagna studied the \prp{AP} in BL-algebras from an different angle in \cite{CMM11}, where they deploy tools from first-order model theory and quantifier elimination. Further progress was made by Aguzzoli and Bianchi in \cite{AB21}, where a partial classification of varieties of BL-algebras with the \prp{AP} is given. They extended this partial classification in \cite{AB23}, after the characterization of \prp{AP} in Theorem~\ref{t:APmain} had become available in an early version of \cite{Fussner2024}. A complete description of $\amal(\BL)$ was finally given in \cite{FS24}, heavily using the tools discusses in Section~\ref{sec:tools}.

In fact, \cite{FS24} proceeds by first giving an explicit description of $\amal(\BH)$, which is somewhat more transparent, and then extending this to give an explicit description of $\amal(\BL)$. Both $\amal(\BH)$ and $\amal(\BL)$ turn out to be unions of countably many finite intervals, which may be described precisely.

The characterizations of $\amal(\BL)$ and $\amal(\BH)$ proceeds as follows. First, we say that a totally ordered basic hoop or BL-algebra is \emph{of finite index} if the chain alluded to in Proposition~\ref{prop:ordinal sum decomp}(i) (respectively (ii)) is finite. For a class $\V$ of basic hoops or BL-algebras, $\Vfc$ denotes its totally ordered members of finite index. Since every finitely generated totally ordered BL-algebra or basic hoop is of finite index, it follows from Corollary~\ref{c:ess-AP-comsem} that if $\V$ is a variety, then it has the \prp{AP} if and only if $\Vfc$ has the essential \prp{AP}.

For a variety of of basic hoops $\V$, denote by $\wajs(\V)$ the class of totally ordered Wajsberg hoops in $\V$. Then $\wajs(\V) \subseteq \Vfc$ and any member of $\Vfc$ is a finite nested sum of members of $\wajs(\V)$. Moreover, for a variety $\V$ of BL-algebras, denote by $\luk(\V)$ the class of totally ordered MV-algebras in $\V$ and by $\basic(\V)$ the class of totally ordered basic hoops $\m{A}_2$ such that $\m{A}_1 \nsum \m{A}_2  \in \chain{\V}$ for some $\m{A}_1 \in \luk(\V)$. Further defining $\basicfc(\V) = \basic(\V)_{\mathrm{fc}}$, we have $\Vfc \subseteq \luk(\V) \nsum \basicfc(\V)$. 

In \cite{FS24}, it is shown that checking the \prp{AP} for a variety of basic hoops reduces to checking the \prp{AP} of its Wajsberg components together with the arrangement of said components in nested sum decompositions. Further, the \prp{AP} for a variety of BL-algebras reduces to the \prp{AP} for its MV-components and basic hoops components. In particular, by using Theorems~\ref{t:AP-MV} and~\ref{t:AP-WH}, the following result is proven:

\begin{proposition}[\cite{FS24}]\hfill
\begin{enumerate}[\normalfont (i)]
\item If a variety $\V$ of basic hoops has the amalgamation property, then one of the following holds.
\begin{itemize}
\item $\wajs(\V) = \hspu(\m{A})$ with $\m{A} \in \{[0,1]_\WH, \m{Z}^-\} \cup \{\Wn{n},\Wnw{n} \mid n\in \N\}$;
\item $\wajs(\V) = \hspu(\Wn{n},\m{Z}^-)$ for some $n\geq 1$.
\end{itemize}    
\item If a variety $\V$ of BL-algebras has the amalgamation property, then $\luk(\V) = \hspu(\m{A})$ with $\m{A} \in \{[0,1]_\MV\} \cup \{\Ln{n},\Lnw{n} \mid n\in \N\}$ and $\basicfc(\V)$ has the essential amalgamation property.
\end{enumerate}
\end{proposition}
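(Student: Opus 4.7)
The plan is to reduce both parts to Theorems~\ref{t:AP-WH} and~\ref{t:AP-MV} by showing that the Wajsberg (resp.\ MV) components of $\V$ inherit enough amalgamation to fall within the classifications proven there. The main engine is Corollary~\ref{c:ess-AP-comsem}, which applies since every variety of commutative residuated lattices has the \prp{CEP}, together with the nested sum decomposition of Proposition~\ref{prop:ordinal sum decomp}.

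For part (i), let $\V' = \vr(\wajs(\V))$, a variety of Wajsberg hoops satisfying $\chain{\V'} = \wajs(\V)$ up to isomorphism. I would aim to show $\V'$ has the \prp{AP}: Theorem~\ref{t:AP-WH} then identifies $\V'$ as $\vr(\m{A})$ for some specific generator $\m{A}$, and $\wajs(\V) = \chain{\V'} = \hspu(\m{A})$ follows by J\'{o}nsson's lemma and the identification of the totally ordered members of a variety generated by a single chain. By Corollary~\ref{c:ess-AP-comsem}, it suffices to show $\wajs(\V)$ has the essential amalgamation property. Given an essential span $\langle\varphi_1,\varphi_2\rangle$ of Wajsberg hoops in $\wajs(\V)\subseteq\chain{\V}$, apply the \prp{AP} of $\V$ and Corollary~\ref{c:ess-AP-comsem} to obtain an amalgam $\langle\psi_1,\psi_2\rangle$ with codomain $\m{D} = \Nsum_{i\in I}\m{D}_i$ from Proposition~\ref{prop:ordinal sum decomp}(i). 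The structural heart of the argument is to show that $\psi_1(\m{B})$ and $\psi_2(\m{C})$ both land in a single summand $\m{D}_{i_0}$. Indeed, if $b_1, b_2 < \ut$ had images in distinct summands $\m{D}_i,\m{D}_j$ with $i<j$, then the nested-sum computations $\psi(b_1)\to\psi(b_2)=\ut$ and $\psi(b_2)\to\psi(b_1)=\psi(b_1)$ yield $(\psi(b_1)\to\psi(b_2))\to\psi(b_2) = \psi(b_2) \neq \ut = (\psi(b_2)\to\psi(b_1))\to\psi(b_1)$, contradicting the Wajsberg identity $(x\to y)\to y\approx(y\to x)\to x$. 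Assuming the span is non-trivial, the shared image of $\m{A}$ forces $\psi_1$ and $\psi_2$ to land in the same summand, producing an amalgam in $\wajs(\V)$.

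For part (ii), the MV-component argument runs parallel: essential spans of MV-algebras in $\luk(\V)$ lift to $\chain{\V}$, are amalgamated via the \prp{AP} of $\V$, and decompose via Proposition~\ref{prop:ordinal sum decomp}(iii) as $\m{M}'\nsum\m{N}'$ with $\m{M}'$ an MV-algebra; the images are forced into $\m{M}'$ because each MV-algebra in the span contains $\bot$. This shows $\vr(\luk(\V))$ has the \prp{AP}, so Theorem~\ref{t:AP-MV} identifies it. For the basic-hoop component, given an essential span $\langle\varphi_1,\varphi_2\rangle$ in $\basicfc(\V)$, I would prepend a carefully chosen MV-algebra $\m{M}\in\luk(\V)$ common to all three algebras of the span, forming $\langle \mathsf{id}_{\m{M}}\nsum\varphi_1, \mathsf{id}_{\m{M}}\nsum\varphi_2\rangle$ in $\chain{\V}$; essentiality transfers because congruences of nested sums decompose componentwise and $\mathsf{id}_{\m{M}}$ is essential. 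Amalgamating in $\chain{\V}$ and extracting the basic-hoop component above $\m{M}$ in the resulting Proposition~\ref{prop:ordinal sum decomp}(iii) decomposition yields an amalgam of the original span in $\basic(\V)$. The finite-index constraint on the amalgam can be arranged through the finite generation assumptions underlying Corollary~\ref{c:ess-AP-comsem}.

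The main obstacle is the technical setup in part (ii): one must verify both that essentiality is preserved when prepending an MV-algebra and, crucially, that a uniform choice of common MV bottom $\m{M}\in\luk(\V)$ exists for any given span in $\basicfc(\V)$. The latter requires a close examination of closure properties of $\luk(\V)$ under subalgebras and ultraproducts, together with the interaction between nested sums and the subdirectly irreducible members of $\V$. By contrast, the Wajsberg-identity computation in part (i) is transparent, and the reduction to Theorems~\ref{t:AP-WH} and~\ref{t:AP-MV} via Corollary~\ref{c:ess-AP-comsem} is routine once these structural steps are in place.
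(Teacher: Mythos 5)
Your overall architecture---reduce to Theorems~\ref{t:AP-WH} and~\ref{t:AP-MV} by showing that $\wajs(\V)$, respectively $\luk(\V)$, has the essential amalgamation property, using Corollary~\ref{c:ess-AP-comsem} in both directions and the nested sum decomposition of Proposition~\ref{prop:ordinal sum decomp} to localize an amalgam to a single summand---is exactly the reduction the paper attributes to \cite{FS24}, and your Wajsberg-identity computation confining the image of a Wajsberg (or MV) subalgebra of a nested sum to a single summand is correct. The two obstacles you flag in part (ii) do not in fact require the ``close examination'' you anticipate. For the uniform bottom, take $\m{M}=\Ln{1}$: if $\m{M}_B\nsum\m{B}\in\chain{\V}$ witnesses $\m{B}\in\basic(\V)$, then $\{\bot,\ut\}\cup B$ is a subuniverse of $\m{M}_B\nsum\m{B}$ isomorphic to $\Ln{1}\nsum\m{B}$, so $\Ln{1}\nsum\m{B}\in\chain{\V}$, and likewise for $\m{A}$ and $\m{C}$. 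Essentiality of $\mathsf{id}_{\m{M}}\nsum\varphi_2$ holds because a nontrivial congruence $\Theta$ of $\m{M}\nsum\m{C}$ has some $d<\ut$ in $[\ut]_\Theta$; either $d\in M$, which already lies in the image of $\m{M}\nsum\m{A}$, or $d\in C$, in which case $\Theta\cap C^2$ is a nontrivial congruence of $\m{C}$ and essentiality of $\varphi_2$ applies. The extraction also works: if $\psi_1(b)\in D_1\setminus\{\ut\}$ for some $b\in B\setminus\{\ut\}$, then $\psi_1(b)\to\bot=\psi_1(b\to\bot_M)=\bot$, so the Wajsberg identity in the MV-summand $\m{D}_1$ forces $\psi_1(b)=(\psi_1(b)\to\bot)\to\bot=\ut$, a contradiction; hence the images of $\m{B}$ and $\m{C}$ lie in the basic-hoop summand $\m{D}_2\in\basic(\V)$.

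The one place where your proposed repair is actually wrong is the finite-index constraint. Finite index is not finite generation ($[0,1]_{\WH}$ has index $1$ but is not finitely generated), members of $\basicfc(\V)$ need not be finitely generated, and the finite-generation clause of Corollary~\ref{c:ess-AP-comsem} concerns the algebras in the span, not the amalgam, so it cannot be invoked to place $\m{D}_2$ in $\basicfc(\V)$. The correct fix uses your own identity computation: a Wajsberg subalgebra of a nested sum of Wajsberg hoops meets at most one summand nontrivially, so each of the finitely many Wajsberg components of $\m{B}$ and of $\m{C}$ maps into a single summand of $\m{D}_2$; thus $\psi_1(B)\cup\psi_2(C)$ meets only finitely many summands $\m{D}_j$, $j\in J$, the set $\bigcup_{j\in J}D_j$ is a subuniverse isomorphic to the finite nested sum $\Nsum_{j\in J}\m{D}_j$, and $\m{D}_1\nsum\Nsum_{j\in J}\m{D}_j$ is a subalgebra of $\m{D}$ witnessing that this finite nested sum lies in $\basicfc(\V)$. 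With these repairs, together with the observation that the trivial-$\m{A}$ edge case in part (i) is handled because essentiality then forces $\m{C}$ to be trivial, your argument is complete and matches the method of \cite{FS24} in substance.
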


However, the previous proposition only gives a necessary condition for the \prp{AP}. For a sufficient condition, there are some more involved technicalities regarding the nested sum decompositions of members of $\Vfc$ that have to be considered. These technicalities rely heavily on the notion of the essential \prp{AP} and essential closedness (see Section~\ref{sec:tools}). As a consequence of this analysis, one may obtain the following result:

\begin{theorem}[\cite{FS24}]\hfill
\begin{enumerate}[\normalfont (i)]
\item There are only countably many varieties of basic hoops that have the amalgamation property.
\item There are only countably many varieties of BL-algebras that have the amalgamation property.
\end{enumerate}
\end{theorem}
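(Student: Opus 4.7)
The plan is to leverage the previous proposition, which already pins $\wajs(\V)$ (respectively $\luk(\V)$) to one of countably many possibilities whenever $\V$ has $\prp{AP}$, and then to argue that for each such fixed choice there are at most countably many AP-varieties sharing it. The two parts have essentially the same architecture, with (ii) reducing to (i).

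For part (i), suppose $\V \subseteq \BH$ has $\prp{AP}$. The previous proposition forces $\wajs(\V)$ to equal $\hspu(\m{A})$ for $\m{A}$ drawn from the explicitly enumerated list of Theorem~\ref{t:AP-WH}, or $\hspu(\Wn{n},\m{Z}^-)$ for some $n\geq 1$; in either case there are only countably many admissible $\mathcal{W}$. Fixing such a $\mathcal{W}$, I would observe that $\V$ is semilinear, so by Corollary~\ref{c:ess-AP-comsem} it is controlled by $\chain{\V}$, and by Proposition~\ref{prop:ordinal sum decomp}(i) every member of $\chain{\V}$ is a nested sum of algebras from $\wajs(\V) = \mathcal{W}$. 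The core step is then to show, via the essential-closedness machinery of Section~\ref{sec:tools} and the detailed analysis of how essential amalgamation propagates through nested sum layers developed in \cite{FS24}, that an AP-variety with fixed $\wajs(\V) = \mathcal{W}$ is determined by a finite tuple of combinatorial parameters (for instance, bounds on the lengths of nested sums appearing in $\Vfc$ and on the admissible positions of the various Wajsberg components within them). Since there are countably many such tuples, there are at most countably many AP-varieties with this Wajsberg profile, and hence at most countably many in total.

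For part (ii), apply Proposition~\ref{prop:ordinal sum decomp}(iii) to decompose every totally ordered BL-algebra as $\m{A}_1 \nsum \m{A}_2$ with $\m{A}_1$ a totally ordered MV-algebra and $\m{A}_2$ a totally ordered basic hoop. The previous proposition forces $\luk(\V)$ into the countable list from Theorem~\ref{t:AP-MV} and demands that $\basicfc(\V)$ have the essential $\prp{AP}$. Invoking part (i)---or, more precisely, the same combinatorial analysis applied to the basic hoop component---shows that only countably many choices of $\basic(\V)$ are compatible with $\prp{AP}$. Since $\V$ is recovered from the admissible pairings $\m{A}_1 \nsum \m{A}_2$ and there are countably many options for each factor, the total count remains countable.

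The main obstacle lies entirely in the finite-parameter step of (i). A priori, within a single $\mathcal{W}$ one could imagine constructing a continuum of distinct subvarieties of $\BH$ by varying which nested-sum configurations of elements of $\mathcal{W}$ are present; ruling this out for AP-varieties is exactly what the delicate essential-closedness bookkeeping of \cite{FS24} is designed to do. Once that bookkeeping is in hand, (ii) becomes a conceptually routine reduction combining (i), Theorem~\ref{t:AP-MV}, and the nested-sum decomposition of Proposition~\ref{prop:ordinal sum decomp}(iii).
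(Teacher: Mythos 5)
Your outline follows essentially the same route as the paper, which itself only sketches the argument and defers the details to \cite{FS24}: first pin $\wajs(\V)$ (resp.\ $\luk(\V)$) to the countable list coming from the necessary-condition proposition, then argue via the essential-closedness analysis of nested-sum decompositions that each such profile is compatible with only countably many varieties having the \prp{AP}. The only caveat is that the step you rightly flag as the main obstacle---that a fixed Wajsberg (or MV/basic-hoop) profile admits only countably many AP-compatible nested-sum configurations---is exactly the content the survey also leaves to \cite{FS24}, where $\amal(\BH)$ and $\amal(\BL)$ are shown to be unions of countably many finite intervals, so neither your proposal nor the paper's own discussion actually discharges that step here.
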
  

In fact, \cite{FS24} gives an explicit characterization of the varieties of basic hoops and BL-algebras with the \prp{AP}, although the list of these varieties is rather complicated.

The form of the arguments in \cite{FS24} suggest that a similar analysis is likely to prevail in describing $\amal(\V)$ for other varieties $\V\in\slat(\CSemRL)$ adjacent to $\BL$ (or $\BH)$, provided that a reasonably tractable nested sum representation is available for the totally ordered members of $\V$. The most natural next step is to consider the variety $\MTL$ of MTL-algebras, i.e., bounded integral commutative semilinear residuated lattices which, as seen in Section~\ref{sec:knotted}, fails the \prp{AP}. However, nested sum representations for MTL-algebras appear to be far less useful than they are for BL-algebras, owing to the fact that the sum-irreducible MTL-algebras are extraordinarily diverse and complex. The following question is thus intriguing, but likely quite difficult.

\begin{question}
Can one explicitly describe $\amal(\MTL)$? In particular, is $\amal(\MTL)$ countable?
\end{question}

Although partial answers to the preceding question are likely to be quite easy to come by---e.g., in the context of subvarieties of $\MTL$ with transparent sum-irreducible members---it appears that a full resolution of this problem will require some novel ideas.

\appendix
\section{Nomenclature for subvarieties}\label{app:nomenclature}
In this appendix, we summarize the notational conventions for varieties used throughout this paper. We also provide a brief digest of what is known about the \prp{AP} in varieties that we have considered. All of this information is given in the following table. The last column contains what is known about the cardinality of $\amal(\V)$ for the respective varieties.

\begin{table}[t]
\centering
\begin{tabular}{|l|l|l|l|}
\hline
Variety $\V$ & Abbreviation & AP &  $\lvert \amal(\V) \rvert$ \\
\hline
\hline 
Semilinear residuated lattices & $\SemRL$ & no* & $2^{\aleph_0}$  \\
\hline
Commutative $\SemRL$ & $\CSemRL$ & no* & $\geq \aleph_0$  \\
\hline
Idempotent $\SemRL$ & $\nSRL{1}$ & no   & $2^{\aleph_0}$  \\
\hline 
Idempotent $\CSemRL$ & $\nCSRL{1}$ & yes & $60$ \\
\hline 
Gödel algebras & $\GA$ & yes  & $4$ \\
\hline 
Relative Stone algebras & $\RSA$ & yes & $3$  \\
\hline
Sugihara monoids & $\SM$ & yes  & $9$ \\
\hline 
Odd Sugihara monoids & $\OSM$ & yes & $3$ \\
\hline 
$\pair{m,n}$-knotted $\SemRL$ ($m\geq1$, $n\geq 0$) & & no* & $2^{\aleph_0}$    \\
\hline 
$\pair{m,n}$-knotted $\CSemRL$ ($m\geq1$, $n\geq 0$) & & no*  & ? \\
\hline 
$n$-potent $\SemRL$ ($n\geq 2$) &  & no*   & $2^{\aleph_0}$  \\
\hline
$n$-potent $\CSemRL$ ($n\geq 2$) &  & no* & $\geq 60$ \\
\hline
MTL-algebras & $\MTL$ & no &  $\geq \aleph_0$ \\
\hline 
De Morgan monoids & $\mathsf{DMM}$ & no & $\geq \aleph_0$ \\
\hline 
Semilinear $\mathsf{DMM}$ & $\mathsf{SDMM}$ & no & $\geq \aleph_0$ \\
\hline
Cancellative $\SemRL$ & $\CanSRL$ & no & $\geq 3$ \\
\hline 
Commutative $\CanSRL$ & $\CCanSRL$ & ? & $\geq 3$ \\
\hline 
Integral  $\CanSRL$ &  & ? & $\geq 2$ \\
\hline
Lattice-ordered groups & $\LG$ & no & $\geq 2$  \\
\hline 
Abelian $\LG$ & $\ALG$ & no & $2$  \\
\hline 
Representable $\LG$ & $\RLG$ & no & $\geq 2$ \\
\hline  
MV-algebras & $\MV$ & yes & $\aleph_0$  \\
\hline 
Wajsberg hoops & $\WH$ & yes & $\aleph_0$ \\
\hline
BL-algebras & $\BL$ & yes & $\aleph_0$  \\
\hline 
Basic hoops & $\BH$ & yes  & $\aleph_0$ \\
\hline
\end{tabular}
\caption{Table of the some of the varieties considered in this paper. The superscript * indicates that the same is true for bounded and involutive expansions and a question mark indicates that it is an open problem.}
\end{table}

\clearpage
\bibliographystyle{plain}
\bibliography{literatur}


\AuthorAdressEmail{Wesley Fussner}{Institute of Computer Science\\
Czech Academy of Sciences \\
Pod Vodárenskou věží 271/2 \\
182 00 Praha 8 \\
Czechia}{fussner@cs.cas.cz}

\AuthorAdressEmail{Simon Santschi}{Mathematical Institute\\
University of Bern\\
Alpeneggstrasse 22 \\
3012 Bern\\
Switzerland}{simon.santschi@unibe.ch}

\end{document}